\let\MYcaption\@makecaption
\let\@makecaption\MYcaption
\newcommand*{\ie}{%
  \@ifnextchar{,}%
  {\textit{i.e.}}%
  {\textit{i.e.,}\@\xspace}%
}
\newcommand*{\eg}{%
  \@ifnextchar{,}%
  {\textit{e.g.}}%
  {\textit{e.g.,}\@\xspace}%
}
\newcommand*{\etc}{%
  \@ifnextchar{.}%
  {\textit{etc}}%
  {\textit{etc.}\@\xspace}%
}
\newcommand*{\etal}{%
  \@ifnextchar{.}%
  {\textit{et al}}%
  {\textit{et al.}\@\xspace}%
}
\newcommand*{\cf}{%
  \@ifnextchar{.}%
  {\textit{cf}}%
  {\textit{cf.}\@\xspace}%
}
\newcommand*{\aka}{%
  \@ifnextchar{,}%
  {\textit{a.k.a.}}%
  {\textit{a.k.a.}\@\xspace}%
}
\theoremstyle{plain}
\newtheorem{thm}{Theorem}
\newtheorem{lemma}{Lemma}
\newtheorem{assumption}{Assumption}
\newtheorem{fact}{Fact}
\newtheorem{coroll}{Corollary}
\newenvironment{lemlist}{%

  \begin{enumerate}}%
  {\end{enumerate}}
\newenvironment{asslist}{%

  \begin{enumerate}}%
  {\end{enumerate}}
\newenvironment{applist}{%

  \begin{enumerate}}%
  {\end{enumerate}}
\crefname{thm}{Theorem}{Theorems}
\crefname{prop}{Proposition}{Propositions}
\crefname{assumption}{Assumption}{Assumptions}
\crefname{lemma}{Lemma}{Lemmata}
\crefname{definition}{Definition}{Definitions}
\crefname{example}{Example}{Examples}
\crefname{algo}{Algorithm}{Algorithms}
\crefname{fact}{Fact}{Facts}
\crefname{claim}{Claim}{Claims}
\crefname{appendix}{Appendix}{Appendices}
\crefname{coroll}{Corollary}{Corollaries}
\crefname{figure}{Figure}{Figures}
\crefname{section}{Section}{Sections}
\crefname{algorithm}{Algorithm}{Algorithms}
\DeclareRobustCommand{\crefnosort}[1]{%
  \begingroup\@cref@sortfalse\cref{#1}\endgroup
}
\newcommand{\IntegerP}{\mathbb{Z}_{\geq 0}}
\newcommand{\IntegerPP}{\mathbb{Z}_{>0}}
\newcommand{\Real}{\mathbb{R}}
\newcommand{\RealPP}{\mathbb{R}_{>0}}
\newcommand\given{{\mathbin{}\mid\mathbin{}}}
\newcommand\vect[1]{\mathbf{#1}}
\newcommand\mypound{\protect\scalebox{0.8}{\protect\raisebox{0.4ex}{\#}}}
\newcommand\limas{\xrightarrow{\text{a.s.}}}
\providecommand\given{} 
\newcommand\SetSymbol[1][]{
  \nonscript\,#1\vert \allowbreak \nonscript\,\mathopen{}}
\DeclarePairedDelimiterX\Set[1]{\lbrace}{\rbrace}%
{\renewcommand\given{\SetSymbol[\delimsize]} #1 }
\DeclarePairedDelimiterX\innerp[2]{\langle}{\rangle}{#1
  \mathop{}\delimsize\vert\mathop{} #2}
\DeclarePairedDelimiterX\norm[1]\lVert\rVert{\ifblank{#1}{\:\cdot\:}{#1}}
\DeclareMathOperator{\sign}{sgn}
\DeclareMathOperator{\Fix}{Fix}
\DeclareMathOperator{\graph}{gph}
\DeclareMathOperator{\Expect}{\mathbb{E}}
\DeclareMathOperator{\Prob}{\mathbb{P}}
\DeclareMathOperator{\rank}{rank}
\DeclareMathOperator{\prox}{Prox}
\DeclareMathOperator{\Id}{Id}
\DeclareMathOperator{\range}{ran}
\begin{document}

\title{The Stochastic Fej\'{e}r-Monotone Hybrid Steepest Descent Method and the Hierarchical RLS}

\author{Konstantinos~Slavakis
  \thanks{K.~Slavakis is with the
    Department of Electrical Engineering, University at Buffalo, The State University of New York,
    NY 14260-2500, USA; Email: kslavaki@buffalo.edu; Tel: +1~(716)~645-1012.}%
  \thanks{Parts of this work were presented at \cite{SFMHSDM.Asilomar.18}. This work was supported
    by the NSF grants 1525194 and 1718796.}}

\maketitle

\begin{abstract}
  This paper introduces the stochastic Fej\'{e}r-monotone hybrid steepest descent method (S-FM-HSDM)
  to solve affinely constrained and composite convex minimization tasks. The minimization task is
  not known exactly; noise contaminates the information about the composite loss function and the
  affine constraints. S-FM-HSDM generates sequences of random variables that, under certain
  conditions and with respect to a probability space, converge point-wise to solutions of the
  noiseless minimization task. S-FM-HSDM enjoys desirable attributes of optimization techniques such
  as splitting of variables and constant step size (learning rate). Furthermore, it provides a novel
  way of exploiting the information about the affine constraints via fixed-point sets of appropriate
  nonexpansive mappings. Among the offsprings of S-FM-HSDM, the hierarchical recursive least squares
  (HRLS) takes advantage of S-FM-HSDM's versatility toward affine constraints and offers a novel
  twist to LS by generating sequences of estimates that converge to solutions of a hierarchical
  optimization task: Minimize a convex loss over the set of minimizers of the ensemble LS
  loss. Numerical tests on a sparsity-aware LS task show that HRLS compares favorably to several
  state-of-the-art convex, as well as non-convex, stochastic-approximation and online-learning
  counterparts.
\end{abstract}

\begin{IEEEkeywords}
  Stochastic approximation, online learning, convex, composite, RLS.
\end{IEEEkeywords}

\section{Introduction}\label{Sec:intro}

\subsection{Problem statement}\label{Sec:problem}

\IEEEPARstart{T}{he} following problem is considered: With a stochastic oracle providing estimates
$f_n$ (or even $\nabla f_n$), $h_n$ and $\mathcal{A}_n$ per $n$ ($n$ denotes discrete time
\textit{and}\/ iteration index; $n\in \IntegerPP \coloneqq \Set{1,2, \ldots}$) of the generally
unknown convex functions $f, h$ and the affine set $\mathcal{A}$, respectively, solve
\begin{align}
  \min\nolimits_{x\in\mathcal{A} \subset\mathcal{X}} f(x) + h(x) + g(x)\,,
  \tag{P} \label{the.problem}
\end{align}
where $\mathcal{X}$ is a finite-dimensional real Hilbert space. Only the convex (regularizing)
function $g$ is assumed to be known exactly. The goal is to construct a sequence of estimates
$(x_n)_n \coloneqq (x_n)_{n\in\IntegerP} \subset\mathcal{X}$ by exploiting the information about
$(f_n)_n$, or $(\nabla f_n)_n$, $(h_n, \mathcal{A}_n)_n$ as well as $g$, and to identify the
conditions which ensure, despite the uncertainty about $f, h$ and $\mathcal{A}$, the point-wise
convergence of $(x_n)_n$ to a solution of \eqref{the.problem} with respect to (w.r.t.) a probability
space.

Instances of \eqref{the.problem} appear in adaptive filtering (AF)~\cite{SayedBook,
  Kalou.Theodo.book.93, Pereyra.survey.16}; in particular, in linear equalization, channel
estimation, beamforming, tracking of fading channels, line and acoustic echo cancellation and active
noise control~\cite{SayedBook}. Special cases of \eqref{the.problem} appear also in stochastic
approximation (SA)~\cite{Kushner.Yin, Pereyra.survey.16} and online learning (OL)~\cite{OCO.book.12,
  Pereyra.survey.16} as in training artificial neural networks, learning optimal strategies in
Markov decision processes, recursive games, sequential-decision tasks in
economics~\cite{Kushner.Yin}, online classification and multi-armed bandit
problems~\cite{OCO.book.12}. (An outline of the strong ties and distinct differences between SA and
online learning is provided in \cite{VisAVis.14}.)

Each one of the three loss terms in \eqref{the.problem} plays a distinct role: $f$ is smooth and
generally unknown, $h$ can be non-smooth and unknown, while $g$ comprises all \textit{known}\/ and
possibly non-smooth regularizing losses. The affine constraint $\mathcal{A}$ renders
\eqref{the.problem} a versatile framework that encompasses a large variety of problems. For example,
given the finite-dimensional Hilbert spaces $\Set{\mathsf{X}_k}_{k=0}^{I_h+J_g}$, with
$I_h, J_g\in \IntegerPP$, the convex functions $\mathfrak{f}: \mathsf{X}_0\to\Real$,
$\mathfrak{h}^{(i)}: \mathsf{X}_i\to\Real \cup \Set{+\infty}$,
$\mathfrak{g}^{(j)}: \mathsf{X}_{j+I_h}\to\Real \cup \Set{+\infty}$, the linear mappings
$H^{(i)}: \mathsf{X}_0\to \mathsf{X}_i$, $G^{(j)}: \mathsf{X}_0\to \mathsf{X}_{j+I_h}$,
$\mathsf{p}^{(i)}\in \mathsf{X}_i$ and $\mathsf{q}^{(j)}\in \mathsf{X}_{j+I_h}$, for
$i\in\Set{1, \ldots, I_h}$ and $j\in\Set{1, \ldots, J_g}$, then the highly structured composite
problem
\begin{align}
  \min_{\mathsf{x}^{(0)} \in\mathsf{X}_0} \mathfrak{f}(\mathsf{x}^{(0)})
  & + \sum\nolimits_{i=1}^{I_h}
    \mathfrak{h}^{(i)}(H^{(i)}\mathsf{x}^{(0)} - \mathsf{p}^{(i)}) \notag\\
  & + \sum\nolimits_{j=1}^{J_g} \mathfrak{g}^{(j)}(G^{(j)}\mathsf{x}^{(0)} -
    \mathsf{q}^{(j)}) \label{problem.multiple.h.g}
\end{align}
can be recast as \eqref{the.problem} if
$\mathcal{X} \coloneqq \mathsf{X}_0 \times \mathsf{X}_1 \times \cdots \times \mathsf{X}_{I_h+J_g} =
\Set{x\coloneqq (\mathsf{x}^{(0)}, \ldots, \mathsf{x}^{(I_h+J_g)}) \given \mathsf{x}^{(k)}\in
  \mathsf{X}_k, \forall k\in \Set{0, \ldots, I_h+J_g}}$,
$f(x)\coloneqq \mathfrak{f}(\mathsf{x}^{(0)})$,
$h(x)\coloneqq \sum_{i=1}^{I_h} \mathfrak{h}^{(i)}(\mathsf{x}^{(i)})$,
$g(x)\coloneqq \sum_{j=1}^{J_g} \mathfrak{g}^{(j)}(\mathsf{x}^{(j+I_h)})$, and the closed affine set
$\mathcal{A}\coloneqq \Set{x\in \mathcal{X} \given \mathsf{x}^{(0)}\in \mathsf{X}_0,
  \mathsf{x}^{(i)} = H^{(i)} \mathsf{x}^{(0)}- \mathsf{p}^{(i)}, \mathsf{x}^{(j+I_h)} = G^{(j)}
  \mathsf{x}^{(0)} - \mathsf{q}^{(j)}, i\in\Set{1, \ldots, I_h}, j\in\Set{1, \ldots, J_g}}$. The
splitting of variables via Cartesian-product spaces facilitates processing; \eg
\eqref{split.variables.LS.problem}. The \eqref{the.problem} formulation can also accommodate any
closed convex (not necessarily affine) constraint $\mathcal{C}$ as follows: Consider
\eqref{problem.multiple.h.g} and let one of the $\Set{\mathfrak{h}^{(i)}}_i$ or
$\Set{\mathfrak{g}^{(j)}}_j$, depending on whether $\mathcal{C}$ bears stochasticity or not, take
the form of the indicator function $\iota_{\mathcal{C}}$ (see \cref{app:preliminaries} for the
definition). More importantly, \eqref{the.problem} allows for cases where the information about
$\mathcal{A}$ is not known exactly, introduces thus stochasticity into $\mathcal{A}$ and opens the
door to new problem formulations and novel algorithmic developments, \eg \eqref{HLS} and
\cref{algo:HRLSa}.

\subsection{Case study: Sparsity-aware least squares}\label{Sec:system.id}

To highlight the versatility of \eqref{the.problem} and to unfold all features of the proposed
algorithmic solution, coined stochastic Fej\'{e}r-monotone hybrid steepest descent method
(S-FM-HSDM), it is instructive to build the discussion around specific instances of
\eqref{the.problem}. To this end, let $\mathcal{X}$ be the Euclidean $\Real^D$. Bold-faced symbols
indicate that $\mathcal{X} = \Real^D$; in particular, lowercase bold-faced symbols denote vectors in
$\Real^D$. Consider a sparse system $\bm{\theta}_*\in \mathcal{X}$ and the classical
\textit{linear-regression model:} $b_n = \vect{a}_n^{\intercal} \bm{\theta}_* + \eta_n$, almost
surely (a.s.), $\forall n\in\IntegerPP$, with input-output data pair
$(\vect{a}_n, b_n)\in \mathcal{X}\times \Real$, the noise process $(\eta_n)_n$ is assumed to be
zero-mean and independent of $(\vect{a}_n)_n$, and $\intercal$ denotes vector/matrix
transposition. Typical stationarity assumptions on $(\vect{a}_n, b_n)_n$ are adopted also here:
$\vect{R} \coloneqq \Expect (\vect{a}_n \vect{a}_n^{\intercal})$,
$\vect{r} \coloneqq \Expect(b_n \vect{a}_n)$, and $\Expect(b_n^2)$ stay constant $\forall n$, where
$\Expect(\cdot)$ denotes expectation. It is well-known that $\bm{\theta}_*$ satisfies the normal
equations
$\bm{\theta}_*\in \Set{\vect{x}\in \mathcal{X} \given \vect{Rx} =
  \vect{r}}$~\cite[(3.9)]{SayedBook}. This section deals with the system-identification problem of
estimating the sparse $\bm{\theta}_*$ without knowing $(\vect{R}, \vect{r})$ but relying only on the
information $(\vect{a}_n, b_n)_n$ provided by the stochastic oracle.

Motivated by the celebrated (Lagrangian form of the) least absolute shrinkage and selection operator
(LASSO)~\cite[(3.52)]{Hastie.book}, designed to solve sparse system-identification problems, the
first instance of \eqref{the.problem} is the convexly regularized least squares:
$\forall n\in\IntegerPP$,
\begin{align}
  & \min_{\vect{x}\in\Real^D} \overbrace{\tfrac{1}{2} \vect{x}^{\intercal} \vect{R}
    \vect{x} - \vect{r}^{\intercal} \vect{x} + \tfrac{1}{2}\Expect(b_n^2)}^{l(\vect{x})} +
    \overbrace{\rho\norm{\vect{x}}_1}^{g(\vect{x})} \notag\\
  & = \min_{\vect{x}\in\Real^D} \Expect \Bigl[\;\underbrace{\tfrac{1}{2n}
    \sum\nolimits_{\nu=1}^n \left(\vect{a}_{\nu}^{\intercal} \vect{x} -
    b_{\nu}\right)^2}_{l_n(\vect{x})}\;\Bigr] + \rho\norm{\vect{x}}_1 \,,
    \tag{CRegLS} \label{CRegLS}
\end{align}
where the $\ell_1$-norm regularizer promotes sparse solutions. \eqref{CRegLS} becomes a special case
of \eqref{the.problem}, if $\mathcal{A} \coloneqq \mathcal{X} = \Real^D \eqqcolon \mathcal{A}_n$,
$(f, f_n) \coloneqq (l, l_n)$, or, $(h, h_n) \coloneqq (l, l_n)$ a.s.

The second instance of \eqref{the.problem} exploits the fact that even the information about
$\mathcal{A}$ may be inexact, and takes the form of a \textit{hierarchical}\/ (H)LS estimation task,
which appears to be new in the AF, SA and OL literature: $\forall n$,
\begin{align}
  \min_{\vect{x}\in\Real^D}
  & {} \left[\, \norm{\vect{x}}_1 \eqqcolon g(\vect{x}) \,\right]\notag \\
  \text{s.to}\
  & {} \vect{x}\in \underbrace{\arg\min_{\vect{x}' \in\Real^D} \Expect \Bigl[
    \sum\nolimits_{\nu=1}^n \left(\vect{a}_{\nu}^{\intercal} \vect{x}' - b_{\nu}\right)^2
    \Bigr]}_{\mathcal{A}} \,, \tag{HLS} \label{HLS}
\end{align}
\ie, the convex loss $g(\cdot)$, here $\norm{}_1$, is minimized over the set of minimizers of the
classical (ensemble) LS loss. Recall that $\mathcal{A}$ in \eqref{HLS} comprises all vectors,
including $\bm{\theta}_*$, that satisfy the normal equations. In the case of
$g(\cdot) \coloneqq \norm{}_1$, \eqref{HLS} can be also viewed as an SA extension of (the
\textit{deterministic}) basis pursuit~\cite{Chen.bp.01}. The mainstream approach, \eg,
\cite{Pereyra.survey.16, PLC.JCP.stochastic.QF.15, Rosasco.Optim.16}, to deal with \eqref{HLS} is to
employ the indicator function $\iota_{\mathcal{A}}$ in the place of one of the $\mathfrak{h}^{(i)}$
and $\mathfrak{g}^{(j)}$ in \eqref{problem.multiple.h.g}. Such a path restricts the means of
treating $\mathcal{A}$ to the projection mapping $P_{\mathcal{A}}$ [recall that $P_{\mathcal{A}}$ is
the proximal mapping of $\iota_{\mathcal{A}}$; \cf~\eqref{def.proximal}]. Since
$(\vect{R}, \vect{r})$ are generally unknown, $\mathcal{A}$ is also unknown to the user. Still, the
goal is to solve \eqref{HLS}. If $(f, f_n) \coloneqq (0, 0) \eqqcolon (h, h_n)$, and $\mathcal{A}_n$
is defined as an estimate of $\mathcal{A}$, then \eqref{HLS} turns out to be a special instance of
\eqref{the.problem}. This paper provides a novel way of using the available estimates
$(\mathcal{A}_n)_n$ of $\mathcal{A}$ via fixed-point sets of appropriate nonexpansive mappings (\cf
\cref{Sec:algo}). This new viewpoint pays off in the computationally efficient HRLSa (\cf
\cref{algo:HRLSa}), which solves \eqref{HLS} under certain conditions, despite the uncertainty in
the estimates $(\mathcal{A}_n)_n$, while scoring the lowest estimation error across a variety of
numerical-test scenarios versus several state-of-the-art schemes (\cf \cref{Sec:tests}).

\subsection{Prior art}\label{Sec:prior.art}

In most cases, OL and SA algorithms have their origins in deterministic optimization schemes. For
example, the OL scheme \cite{ProxSVRG} draws inspiration from the forward-backward (a.k.a.\
proximal-gradient) algorithm~\cite[\S27.3]{HB.PLC.book} and incorporates variance-reduction
arguments~\cite{SVRG.13} into its iterations to effect convergence speed-ups in solving a special
case of \eqref{the.problem}, which appears to be of primary importance in machine learning:
$f \coloneqq (1/M) \sum_{m=1}^M \mathfrak{f}^{(m)}$, where $\Set{\mathfrak{f}^{(m)}}_{m=1}^M$ are
convex and smooth, $M\in\IntegerPP$ is very large, $h \coloneqq 0$ and
$\mathcal{A} \coloneqq \mathcal{X}$. Driven by the need to avoid the cumbersome computation of
$\nabla f$, stochasticity is introduced by selecting randomly only a small subset of
$\Set{\mathfrak{f}^{(m)}}_{m=1}^M$, per time/iteration index $n$, to form an estimate of $\nabla
f$. Recent SA schemes, motivated by the forward-backward algorithm and formulated in the more
general setting of monotone-operator inclusions, can be found in \cite{PLC.JCP.stochastic.QF.15,
  PLC.JCP.stochastic.FB.16, Bianchi.FB.17}. An SA extension of primal-dual methods, where
stochasticity is introduced via general sampling techniques to deal with massive data, is reported
in~\cite{Chambolle.stochastic.PD.18}. An SA extension of the Douglas-Rachford
algorithm~\cite[\S25.2, \S27.2]{HB.PLC.book} is reported in~\cite{PLC.JCP.stochastic.QF.15}. Study
\cite{FastLightSADMM} extends the celebrated alternating direction method of multipliers (ADMM) to
the OL setting, and blends it with variance-reduction arguments to solve a problem similar to that
of \cite{ProxSVRG}, but with a non-trivial, yet deterministic affine constraint
$\mathcal{A} \subsetneq \mathcal{X}$. Furthermore, \cite{Flammarion.Bach.17a} explores the
dual-averaging scheme of \cite{Nesterov.PD.09} in the SA context offering linear-convergence
guarantees for a quadratic $f$ in \eqref{the.problem}, while $\mathcal{X}$ is a closed convex set
with non-empty interior. Moreover, the SA schemes~\cite{Lan.ACSA.12, Rosasco.Optim.16} are motivated
by the deterministic acceleration method of~\cite{Nesterov.83}; in particular, \cite{Lan.ACSA.12}
uses specific step sizes (\cf \cite[(33)]{Lan.ACSA.12}) to effect convergence acceleration in the
case where $h\coloneqq0$, $g$ is (Lipschitz) continuous and a deterministic convex compact
constraint takes the place of $\mathcal{A}$ in \eqref{the.problem}.

With regards to the specific setting of \cref{Sec:system.id}, the state-of-the-art AF schemes
\cite{RLS.meets.l1, SPARLS.10, l0RLS} are built around a variation of \eqref{CRegLS}, where the
regularizing coefficient $\rho_n$ converges to zero as $n\to\infty$. A Bayesian approach to the LS
sparse system-identification problem appears in \cite{Themelis.ASVB.14}, and a greedy RLS approach
based on the orthogonal-matching-pursuit algorithm is reported in \cite{Dumitrescu.greedy.RLS.12}. A
majorization-minimization approach, which includes also non-convex regularizers, is studied
in~\cite{Chouzenoux.stochastic.MM.17}. Basis pursuit~\cite{Chen.bp.01} is used in
\cite{Benesty.BP.10} to provide an interpretation of the estimate-update equation per iteration $n$
of several proportionate-type AF schemes; however, an ensemble-based viewpoint, such as \eqref{HLS},
and a performance analysis are not provided.

\subsection{Contributions and structure of the manuscript}\label{Sec:contributions}

Similarly to \cite{ProxSVRG, PLC.JCP.stochastic.QF.15, PLC.JCP.stochastic.FB.16, Bianchi.FB.17,
  FastLightSADMM, Flammarion.Bach.17a, Lan.ACSA.12}, the proposed S-FM-HSDM (\cref{algo:SFMHSDM})
springs from the deterministic FM-HSDM \cite{FM-HSDM.Optim.18}, which belongs to the HSDM
family~\cite{Yamada.HSDM.2001} and solves \eqref{the.problem} in infinite-dimensional Hilbert spaces
with no stochasticity involved. In \cite{FM-HSDM.Optim.18}, the information about the affine
constraint $\mathcal{A}$ is incorporated into FM-HSDM via an affine nonexpansive mapping
$T: \mathcal{X}\to \mathcal{X}$ whose fixed-point set is
$\mathcal{A} = \Fix T \coloneqq \Set{x\in\mathcal{X} \given Tx = x}$. For example, the (metric)
projection mapping $P_{\mathcal{A}}$ onto $\mathcal{A}$ (\cf \cref{app:preliminaries}) may serve as
$T$~\cite[Prop.~2.11]{FM-HSDM.Optim.18}. Interestingly, the versatile \cite{FM-HSDM.Optim.18} allows
for numerous choices of $T$ other than the mainstream $P_{\mathcal{A}}$ [\cf~\eqref{T.family}].

S-FM-HSDM extends FM-HSDM to the stochastic setting.  With a stochastic oracle providing a sequence
of affine constraints $(\mathcal{A}_n)_n$ as estimates of the generally unknown $\mathcal{A}$, a
mapping $T_n$ is chosen per time index $n$, with $\mathcal{A}_n = \Fix T_n$, to serve as an estimate
of $T$. There are numerous choices of $T_n$ other than the obvious $P_{\mathcal{A}_n}$. Furthermore,
$f$ and $h$ are not required to be known exactly and only estimates $(f_n)_n$ [or even
$(\nabla f_n)_n$] and $(h_n)_n$ are provided to the user by the stochastic oracle. The versatility
of S-FM-HSDM is demonstrated in the system-identification context of \cref{Sec:system.id}, where
S-FM-HSDM solves \eqref{HLS} in \cref{Sec:algo}, with its specific form coined \textit{hierarchical
  (H)RLS.} Mappings $(T_n)_n$ drive the HRLS iterates asymptotically to a vector in $\mathcal{A}$,
and HRLS solves \eqref{HLS} \textit{without}\/ employing any sub-routines for identifying
$\mathcal{A}$ prior to minimizing $g$ over $\mathcal{A}$. It is worth recalling here that
identifying $\mathcal{A}$ requires the computation of $\Expect(\cdot)$ which is a usually
intractable task for the user. A specific choice of $T_n$ [\cf \eqref{Tn.grad}] yields the
computationally efficient HRLSa flavor of S-FM-HSDM (\cf \cref{Sec:algo}).

Many SA methods, such as the classical~\cite{Robbins.Monro.51} and its convex-analytic
extension~\cite{Rosasco.SPGD.14}, rely on diminishing step sizes (learning rates) to ensure a.s.\
convergence of their iterates. Nevertheless, constant step-size schemes, \eg, \cite{Bianchi.FB.17,
  PLC.JCP.stochastic.QF.15}, are highly desirable in signal processing and machine learning since
they appear to \textbf{(i)} reach the neighborhood of solutions in a fewer number of iterations than
the diminishing step-size methods~\cite{Bianchi.FB.17}; and \textbf{(ii)} adapt quickly to changes
of non-stationary environments and track dynamically changing sets of solutions (\cf
\cref{fig:Sce11} and~\cite[Ch.~21]{SayedBook}). S-FM-HSDM operates with a constant step size
$\forall n$. The performance analysis of \cref{Sec:under.the.hood} identifies those conditions which
ensure that S-FM-HSDM converges a.s.\ to a solution of \eqref{the.problem}. For clarity, those
conditions are exemplified in the context of \cref{Sec:system.id}.

To validate the theoretical developments of this work, extensive numerical tests on synthetic data,
within the context of \cref{Sec:system.id}, are reported in \cref{Sec:tests}. Flavors HRLSa and
HRLSb of S-FM-HSDM appear to be the most consistent methods in achieving the lowest estimation error
across a variety of scenarios versus several state-of-the-art AF, SA and OL schemes.

To improve readability, S-FM-HSDM, its specific flavors within the context of \cref{Sec:system.id}
and their main theoretical results are presented first in \cref{Sec:algo}. The performance analysis
and the accompanying assumptions are detailed in \cref{Sec:under.the.hood}, while the necessary
mathematical preliminaries and proofs are deferred to the appendices.

\section{The S-FM-HSDM Family and Its Properties}\label{Sec:algo}

\subsection{The user-defined mappings $(T_n)_n$}\label{Sec:Nonexp.Tn}

To utilize the information about $\mathcal{A}$, this work follows \cite{FM-HSDM.Optim.18} and
considers a mapping $T$ s.t.\ $\Fix(T) = \mathcal{A}$. An obvious choice for $T$ would be the
(metric) projection mapping $P_{\mathcal{A}}$ onto
$\mathcal{A}$~\cite[Prop.~2.11]{FM-HSDM.Optim.18}. Nevertheless, this study revolves around less
obvious cases. In the context of \cref{Sec:system.id}, such examples are:
\begin{subnumcases}{\label{T.LS} T=}
  (\vect{I} - \tfrac{\mu}{\varpi}\vect{R}) + \tfrac{\mu}{\varpi} \vect{r}\,, \quad \varpi\geq
  \norm{\vect{R}}\,,\ \mu\in (0,1]\,, & \label{T.grad}\\
  (\vect{I} + \kappa\vect{R})^{-1} + \kappa (\vect{I} + \kappa\vect{R})^{-1} \vect{r} \,, \quad
  \kappa\in\RealPP\,, & \label{T.prox}
\end{subnumcases}
where $\vect{I}$ is the identity matrix and the spectral norm $\norm{\vect{R}}$ is equal to the
maximum eigenvalue of $\vect{R}$. In fact, any mapping which belongs to the following family of
mappings may serve as a candidate for $T$ \cite[Prop.~2.11]{FM-HSDM.Optim.18}:
\begin{align}\label{T.family}
  \mathfrak{T}_{\mathcal{A}} \coloneqq \Set*{T:\mathcal{X}\to
  \mathcal{X} 
  \given \begin{aligned}
    & \Fix T = \mathcal{A}; T = Q + \pi; \\
    & Q\ \text{is positive}; \norm{Q}\leq 1; \pi\in\mathcal{X}\\
  \end{aligned}}\,.
\end{align}
Any $T\in \mathfrak{T}_{\mathcal{A}}$ is affine, \ie, there exists a linear mapping
$Q:\mathcal{X}\to\mathcal{X}$ and a $\pi\in\mathcal{X}$ s.t.\ $Tx = Qx + \pi$,
$\forall x \in \mathcal{X}$; in short, $T = Q + \pi$. For the linear $Q$,
$\norm{Q} \coloneqq \sup_{\Set{x \given\,\norm{x} \leq 1}} \innerp{x}{Qx}$. Mapping
$Q: \mathcal{X}\to\mathcal{X}$ is called positive if it is linear, bounded, self-adjoint and
$\innerp{x}{Qx}\geq 0$, $\forall x\in\mathcal{X}$~\cite[\S9.3]{Kreyszig}. Since $\norm{Q}\leq 1$,
every mapping $T\in \mathfrak{T}_{\mathcal{A}}$ turns out to be nonexpansive~\cite{HB.PLC.book}:
$\forall (x, x')\in \mathcal{X}^2$,
$\norm{Tx - Tx'} = \norm{Qx - Qx'} = \norm{Q(x-x')} \leq \norm{Q} \,\norm{x-x'} \leq
\norm{x-x'}$. It is also worth noticing here that $\mathfrak{T}_{\mathcal{A}}$ is closed under any
convex combination and certain compositions of its
members~\cite[Prop.~2.10]{FM-HSDM.Optim.18}. Notice also that
$P_{\mathcal{A}}\in \mathfrak{T}_{\mathcal{A}}$~\cite[Prop.~2.11]{FM-HSDM.Optim.18}. Further
information on $\mathfrak{T}_{\mathcal{A}}$ is deferred to \cref{app:preliminaries}.

Notwithstanding, $\mathcal{A}$ is in general unknown to the user; hence, so is $T$ as well. With the
stochastic oracle providing estimates $(\mathcal{A}_n)_n$ of $\mathcal{A}$, the user needs to
construct mappings $(T_n)_n$ that serve as estimates of the unknown $T$. In the context of
\cref{Sec:system.id}, for example, instead of the unknown $\vect{R}$ and $\vect{r}$, their classical
running-average estimates~\cite{SayedBook}, $\forall n\in\IntegerPP$,
\begin{align}
  \vect{R}_n \coloneqq \tfrac{1}{n} \sum\nolimits_{\nu=1}^n \vect{a}_{\nu}
  \vect{a}_{\nu}^{\intercal} \,, \quad \vect{r}_n \coloneqq \tfrac{1}{n} \sum\nolimits_{\nu=1}^n
  b_{\nu} \vect{a}_{\nu}\,, \label{Rn.rn}
\end{align}
can be used to define
\begin{subnumcases}{\label{Tn.LS} T_n \coloneqq }
  (\vect{I} - \tfrac{\mu}{\varpi_n}\vect{R}_n) + \tfrac{\mu}{\varpi_n} \vect{r}_n \,, 
  & \hspace{-70pt} $\varpi_n \geq \norm{\vect{R}_n}$\,, \notag\\
  & \hspace{-70pt} $\mu\in (0,1]$\,, \label{Tn.grad}\\
  (\vect{I} + \kappa\vect{R}_n)^{-1} + \kappa (\vect{I} + \kappa\vect{R}_n)^{-1}
  \vect{r}_n \,,\ \kappa\in\RealPP\,. & \label{Tn.prox}
\end{subnumcases}

\begin{lemma}\label{lem:T.LS}
  For the affine set $\mathcal{A} \coloneqq \Set{\vect{x} \given \vect{Rx} = \vect{r}}$ in
  \cref{Sec:system.id}, mappings \eqref{T.LS} belong to $\mathfrak{T}_{\mathcal{A}}$. Moreover,
  mappings \eqref{Tn.LS} take the form $T_n = Q_n + \pi_n$, where $Q_n: \mathcal{X} \to \mathcal{X}$
  is positive, with $\norm{Q_n} \leq 1$, and $\pi_n\in \mathcal{X}$, a.s., $\forall n$.
\end{lemma}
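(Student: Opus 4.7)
The plan is to verify, for each mapping in \eqref{T.LS}, the three defining properties of the family $\mathfrak{T}_{\mathcal{A}}$ in \eqref{T.family}: an affine decomposition $T = Q + \pi$, positivity together with the norm bound $\norm{Q} \leq 1$ on the linear part, and $\Fix T = \mathcal{A}$. The common ingredient is that $\vect{R} = \Expect(\vect{a}_n \vect{a}_n^{\intercal})$ is symmetric positive semidefinite, so each property reduces to an elementary spectral calculation on a symmetric matrix.

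For \eqref{T.grad} I would read off $Q \coloneqq \vect{I} - (\mu/\varpi)\vect{R}$ and $\pi \coloneqq (\mu/\varpi)\vect{r}$. Self-adjointness of $Q$ follows from that of $\vect{R}$; positivity and $\norm{Q}\leq 1$ amount to checking that every eigenvalue of $Q$ lies in $[0,1]$, which is immediate from $0 \leq \lambda(\vect{R}) \leq \norm{\vect{R}} \leq \varpi$ together with $\mu \in (0,1]$. The identity $\Fix T = \mathcal{A}$ is then a direct rearrangement of $Tx = x$ into $\vect{R}\vect{x} = \vect{r}$. For \eqref{T.prox} I would set $Q \coloneqq (\vect{I}+\kappa\vect{R})^{-1}$ and $\pi \coloneqq \kappa Q \vect{r}$; since the symmetric matrix $\vect{I}+\kappa\vect{R}$ has spectrum in $[1,1+\kappa\norm{\vect{R}}]$, functional calculus gives $Q$ symmetric with spectrum in $(0,1]$, hence $Q$ positive with $\norm{Q}\leq 1$. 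Multiplying the fixed-point equation $Tx = x$ through by $\vect{I}+\kappa\vect{R}$ again collapses it to $\vect{R}\vect{x} = \vect{r}$.

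The second assertion follows by substituting $(\vect{R}_n, \vect{r}_n)$ for $(\vect{R}, \vect{r})$ throughout the previous two arguments. The required symmetry and positive semidefiniteness of $\vect{R}_n$ hold almost surely because, by \eqref{Rn.rn}, $\vect{R}_n$ is an average of outer products $\vect{a}_{\nu} \vect{a}_{\nu}^{\intercal}$, and the bound $\varpi_n \geq \norm{\vect{R}_n}$ used in \eqref{Tn.grad} is imposed by design; for \eqref{Tn.prox} the matrix $\vect{I}+\kappa\vect{R}_n$ is a.s.\ strictly positive definite, so its inverse exists and obeys the same spectral bounds. Note that the lemma only asserts the $Q_n + \pi_n$ decomposition with the same positivity/norm properties, \emph{not} $\Fix T_n = \mathcal{A}_n$, so no further work is required. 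I do not foresee any serious obstacle: the whole argument is a finite-dimensional linear-algebra bookkeeping exercise, with the only subtlety being the ``almost surely'' qualification attached to the random $\vect{R}_n$.
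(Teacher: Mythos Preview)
Your proposal is correct and complete; the direct spectral verification you outline establishes every required property of $\mathfrak{T}_{\mathcal{A}}$ for \eqref{T.grad} and \eqref{T.prox}, and the substitution $(\vect{R},\vect{r})\mapsto(\vect{R}_n,\vect{r}_n)$ carries the same argument over to $(T_n)_n$.

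The paper, however, takes a different route. Rather than checking the three defining conditions of $\mathfrak{T}_{\mathcal{A}}$ by hand, it first rewrites the affine set as the minimizer set of a quadratic, $\mathcal{A} = \arg\min_{\vect{x}} \norm{\vect{R}^{1/2}\vect{x} - \vect{R}^{\dagger/2}\vect{r}}^2$ (using that $\vect{r}\in\range\vect{R}$ by the normal equations), and then invokes \cite[(70a), (70d)]{FM-HSDM.Optim.18}, which identify \eqref{T.grad} and \eqref{T.prox} as, respectively, a gradient-descent step and a proximal step on that quadratic loss and show abstractly that such steps always land in $\mathfrak{T}_{\mathcal{A}}$. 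Your argument is more elementary and fully self-contained, which is a genuine advantage for a reader who does not have \cite{FM-HSDM.Optim.18} at hand; the paper's argument is terser and makes explicit the conceptual origin of \eqref{T.grad} and \eqref{T.prox} as first-order optimization steps, at the cost of delegating the actual verification to another reference.
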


\begin{proof}
  See \cref{app:T.LS}.
\end{proof}


\begin{algorithm}[t]
  \DontPrintSemicolon
  \SetKwInOut{input}{Stochastic oracle's input}
  \SetKwInOut{parameters}{User's input}
  \SetKwInOut{output}{Output}
  \SetKwBlock{initial}{Initialization}{}

  \input{$(\nabla f_n, h_n, \mathcal{A}_n)_{n\in\IntegerPP}$, $L_{\nabla f}$, $g$.}

  \parameters{$\alpha\in[0.5,1)$, $\lambda \in (0, 2(1-\alpha)/L_{\nabla f})$, $T_0$, $\nabla
    f_0$, $x_0$, and $(T_n)_{n\in\IntegerP}$.}

  \output{Sequence $(x_n)_{n\in\IntegerP}$.}

  \BlankLine
  \initial{%

    $ x_{1/2} \coloneqq T_0^{(\alpha)}x_0 - \lambda \nabla f_0(x_0)$. \label{SFMHSDM.step.half}
    
    $x_1 \coloneqq \prox_{\lambda (h_0+g)} (x_{1/2})$.\; \label{SFMHSDM.prox.step.1}
    
  }

  \For{$n = 1$ \KwTo $+\infty$}{%

    $x_{n+1/2} \coloneqq x_{n-1/2} - [T_{n-1}^{(\alpha)}x_{n-1} - \lambda \nabla f_{n-1}(x_{n-1})] +
    [ T_nx_n - \lambda \nabla f_n(x_n)]$. \label{SFMHSDM.step.n.half}
    
    $x_{n+1} \coloneqq \prox_{\lambda (h_n + g)} (x_{n+1/2})$. \label{SFMHSDM.prox.step.n}
    
  }

  \caption{S-FM-HSDM}\label{algo:SFMHSDM}
\end{algorithm}


\subsection{The S-FM-HSDM family}\label{Sec:SFMHSDM.family}

With mapping $T_n$ available, and with the \textit{averaged}\/ mapping $T_n^{(\alpha)}$ defined as
$T_n^{(\alpha)} \coloneqq \alpha T_n + (1-\alpha)\Id$, $\forall n$, where
$\Id: \mathcal{X}\to \mathcal{X}$ stands for the identity operator, S-FM-HSDM is presented in
\cref{algo:SFMHSDM}. $\prox$ in lines \ref{SFMHSDM.prox.step.1} and \ref{SFMHSDM.prox.step.n} of
\cref{algo:SFMHSDM} denotes the proximal mapping [\cf~\eqref{def.proximal}]. In the case where
$L_{\nabla f}$ is not available or cannot be estimated, S-FM-HSDM offers the option of setting
$(f, f_n) \coloneqq (0, 0)$, where $L_{\nabla f}$ can be set equal to any positive real-valued
number (\cf~\cref{Sec:tests}), and any estimate of $f$ can be transferred to the loss $h_n$,
since assumptions on $h$ and $h_n$ are weaker than those on $f$ and $f_n$
(\cf~\cref{Sec:under.the.hood}). Strategies for estimating $L_{\nabla f}$, in the case it is
unknown, will be reported elsewhere. Line~\ref{SFMHSDM.step.n.half} requires only the computation of
the current first-order information $\nabla f_n(x_n)$, whereas $\nabla f_{n-1}(x_{n-1})$, which was
computed at the previous time instance, can be pulled from a buffer that stores information.

\begin{algorithm}[t]
  \DontPrintSemicolon
  \SetKwInOut{input}{Stochastic oracle's input}
  \SetKwInOut{parameters}{User's input}
  \SetKwInOut{output}{Output}
  \SetKwBlock{initial}{Initialization}{}

  \input{$(\vect{a}_n, b_n)_{n\in\IntegerPP}$.}
  \parameters{$\alpha\in[0.5,1)$, $\lambda\in\RealPP$, $\vect{R}_0$, $\vect{r}_0$,
    $\vect{x}_0$, and $\varpi_0\geq \norm{\vect{R}_0}$.}
  \output{Sequence $(\vect{x}_n)_{n\in\IntegerP}$.}

  \BlankLine
  \initial{%

    $\vect{x}_{1/2} \coloneqq \vect{x}_0 - \alpha\tfrac{1}{\varpi_0}(\vect{R}_0\vect{x}_0 -
    \vect{r}_0)$. \label{HRLSa.step.half}
    
    For any $d\in\Set{1, \ldots, D}$,
    $[\vect{x}_1]_d \coloneqq [\vect{x}_{1/2}]_d \cdot (1 - \lambda / \max\{\lambda, \lvert
    [\vect{x}_{1/2}]_d \rvert\})$. \label{HRLSa.prox.step.1}
    
  }

  \For{$n = 1$ \KwTo $+\infty$}{%

    Set $\varpi_n\geq \norm{\vect{R}_n}$.\label{HRLSa.def.varpi.nplus1}

    $\vect{x}_{n+1/2} \coloneqq \vect{x}_{n} + \vect{x}_{n-1/2} - \vect{x}_{n-1} +
    \alpha\tfrac{1}{\varpi_{n-1}}(\vect{R}_{n-1} \vect{x}_{n-1} - \vect{r}_{n-1}) -
    \tfrac{1}{\varpi_n}(\vect{R}_n \vect{x}_n - \vect{r}_n)$. \label{HRLSa.step.n.half}
    
    For any $d\in\Set{1, \ldots, D}$,
    $[\vect{x}_{n+1}]_d \coloneqq [\vect{x}_{n+1/2}]_d \cdot (1 - \lambda / \max\{\lambda, \lvert
    [\vect{x}_{n+1/2}]_d \rvert\})$. \label{HRLSa.prox.step.n}
    
  }
  \caption{HRLSa}\label{algo:HRLSa}
\end{algorithm}

In the context of \eqref{HLS}, if \eqref{Tn.grad} with $\mu \coloneqq 1$ is adopted, S-FM-HSDM takes
the flavor of \cref{algo:HRLSa}, coined HRLSa. Since $g(\cdot) = \norm{\cdot}_1$,
$\prox_{\lambda g}(\cdot)$ in lines \ref{HRLSa.prox.step.1} and \ref{HRLSa.prox.step.n} of
\cref{algo:HRLSa} boils down to the popular soft-thresholding operation. Following \eqref{Tn.grad},
line~\ref{HRLSa.def.varpi.nplus1} of \cref{algo:HRLSa} introduces an over-estimate $\varpi_n$ of the
maximum eigenvalue $\lambda_{\max}(\vect{R}_n) = \norm{\vect{R}_n}$. To this end, motivated by the
celebrated power iteration~\cite{Golub.VanLoan}, and for an arbitrarily fixed initial vector
$\vect{p}_0\in \mathcal{X}$, the following iterative procedure, run over all $n\in\IntegerPP$, is
used in \cref{Sec:tests} to generate $(\varpi_n)_n$: \textbf{(i)}
$\vect{q}_n \coloneqq \vect{R}_n\vect{p}_{n-1}$; \textbf{(ii)}
$\vect{p}_n \coloneqq \vect{q}_n / \norm{\vect{q}_n}$; \textbf{(iii)}
$\varpi_n \coloneqq \vect{p}_n^{\intercal} \vect{R}_n \vect{p}_n + \epsilon_{\varpi}$, for a
user-defined $\epsilon_{\varpi}\in\RealPP$. If \eqref{Tn.prox} is used as $T_n$ in
\cref{algo:SFMHSDM}, the flavor of S-FM-HSDM is coined HRLSb. Due to space limitations, the detailed
pseudo-code description of HRLSb is omitted. Other options for $T_n$ will be explored
elsewhere. Between HRLSa and HRLSb, HRLSa exhibits the lowest computational complexity, of order
$\mathcal{O}(D^2)$ per $n$. HRLSb requires the matrix inversion
$(\vect{I} + \lambda\vect{R}_n)^{-1}$ for the running average $\vect{R}_n$ in \eqref{Rn.rn}.

\begin{algorithm}[t]
  \DontPrintSemicolon
  \SetKwInOut{input}{Stochastic oracle's input}
  \SetKwInOut{parameters}{User's input}
  \SetKwInOut{output}{Output}
  \SetKwBlock{initial}{Initialization}{}

  \input{$(\vect{a}_n, b_n)_{n\in\IntegerP}$.}

  \parameters{$\alpha\in[0.5,1)$, $\lambda\in\RealPP$, $\vect{R}_0$, $\vect{r}_0$,
    $(\bm{\mathsf{x}}_0^{(1)}, \bm{\mathsf{x}}_0^{(2)})$.}
  
  \output{Sequence $(\vect{x}_n)_{n\in\IntegerP}$.}

  \BlankLine
  \initial{%

    $\vect{x}_0 \coloneqq \tfrac{1}{2}(\bm{\mathsf{x}}_0^{(1)} + \bm{\mathsf{x}}_0^{(2)})$.
    
    $\bm{\mathsf{x}}_{1/2}^{(i)} \coloneqq \alpha\vect{x}_0 + (1-\alpha)
    \bm{\mathsf{x}}_0^{(i)}$, $i\in\Set{1,2}$. \label{CRegLS.step.half}
    
    $\bm{\mathsf{x}}_1^{(1)} \coloneqq (\vect{I} + \lambda\vect{R}_0)^{-1}(\bm{\mathsf{x}}_{1/2}^{(1)} +
    \lambda\vect{r}_0)$. \label{CRegLS.prox.step.(1)}
    
    $[\bm{\mathsf{x}}_1^{(2)}]_d \coloneqq [\bm{\mathsf{x}}_{1/2}^{(2)}]_d \cdot (1 -
    \lambda\rho / \max\{\lambda\rho, \lvert [\bm{\mathsf{x}}_{1/2}^{(2)}]_d
    \rvert\})$, $\forall d\in\Set{1, \ldots, D}$. \label{CRegLS.prox.step.(2)}
    
    $\vect{x}_1 \coloneqq \tfrac{1}{2}(\bm{\mathsf{x}}_1^{(1)} + \bm{\mathsf{x}}_1^{(2)})$.
    
  }

  \For{$n = 1$ \KwTo $+\infty$}{%

    $\bm{\mathsf{x}}_{n+1/2}^{(i)} \coloneqq \bm{\mathsf{x}}_{n-1/2}^{(i)} - \alpha \vect{x}_{n-1} -
    (1-\alpha) \bm{\mathsf{x}}_{n-1}^{(i)} + \vect{x}_n$,
    $i\in\Set{1,2}$. \label{CRegLS.step.n.half}
    
    $\bm{\mathsf{x}}_{n+1}^{(1)} \coloneqq (\vect{I} +
    \lambda\vect{R}_n)^{-1}(\bm{\mathsf{x}}_{n+1/2}^{(1)} +
    \lambda\vect{r}_n)$. \label{CRegLS.prox.step.n.(1)}
    
    $[\bm{\mathsf{x}}_{n+1}^{(2)}]_d \coloneqq [\bm{\mathsf{x}}_{n+1/2}^{(2)}]_d \cdot (1 -
    \lambda\rho / \max\{\lambda\rho, \lvert [\bm{\mathsf{x}}_{n+1/2}^{(2)}]_d \rvert\})$,
    $\forall d\in\Set{1, \ldots, D}$. \label{CRegLS.prox.step.n.(2)}
    
    $\vect{x}_{n+1} \coloneqq \tfrac{1}{2}(\bm{\mathsf{x}}_{n+1}^{(1)} +
    \bm{\mathsf{x}}_{n+1}^{(2)})$.
    
  }
  \caption{S-FM-HSDM(CRegLS)}\label{algo:CRegLS}
\end{algorithm}

In the context of \eqref{CRegLS}, \cref{algo:SFMHSDM} yields \cref{algo:CRegLS}, tagged
S-FM-HSDM(CRegLS). To verify that \cref{algo:CRegLS} is indeed a by-product of \cref{algo:SFMHSDM},
notice that \eqref{CRegLS} can be seen, via variable splitting in the spirit of
\eqref{problem.multiple.h.g}, as
\begin{alignat}{2}
  \min_{(\bm{\mathsf{x}}^{(1)}, \bm{\mathsf{x}}^{(2)}) \in \Real^D\times \Real^D}
  & \Expect && \Bigl[\;\overbrace{\tfrac{1}{2n} \sum\nolimits_{\nu=1}^n
    \left(\vect{a}_{\nu}^{\intercal} \bm{\mathsf{x}}^{(1)} -
      b_{\nu}\right)^2}^{h_n(\bm{\mathsf{x}}^{(1)})}\;\Bigr] \notag\\
  &&& + \underbrace{\rho\norm{\bm{\mathsf{x}}^{(2)}}_1}_{g(\bm{\mathsf{x}}^{(2)})}\ \text{s.to}\
  \bm{\mathsf{x}}^{(1)} = \bm{\mathsf{x}}^{(2)} \,, \label{split.variables.LS.problem}
\end{alignat}
so that space $\mathcal{X}$ is set to be $\Real^D\times \Real^D$, with inner product
$\innerp{(\bm{\mathsf{x}}^{(1)}, \bm{\mathsf{x}}^{(2)})}{(\bm{\mathsf{x}}'^{(1)},
  \bm{\mathsf{x}}'^{(2)})} \coloneqq \innerp{\bm{\mathsf{x}}^{(1)}}{\bm{\mathsf{x}}'^{(1)}} +
\innerp{\bm{\mathsf{x}}^{(2)}}{\bm{\mathsf{x}}'^{(2)}}$. Moreover, $\mathcal{A}$ is the linear
subspace
$\Set{(\bm{\mathsf{x}}^{(1)}, \bm{\mathsf{x}}^{(2)}) \in\mathcal{X} \given \bm{\mathsf{x}}^{(1)} =
  \bm{\mathsf{x}}^{(2)}}$, with (orthogonal) projection mapping given by
$P_{\mathcal{A}}[(\bm{\mathsf{x}}^{(1)}, \bm{\mathsf{x}}^{(2)})] = ( (\bm{\mathsf{x}}^{(1)} +
\bm{\mathsf{x}}^{(2)})/2, (\bm{\mathsf{x}}^{(1)} + \bm{\mathsf{x}}^{(2)})/2)$ and
$T_n \coloneqq T \coloneqq P_{\mathcal{A}}$ in \cref{algo:SFMHSDM}. Moreover,
$\prox_{\lambda(h_n+g)}[(\bm{\mathsf{x}}^{(1)}, \bm{\mathsf{x}}^{(2)})] = (\prox_{\lambda
  h_n}(\bm{\mathsf{x}}^{(1)}), \prox_{\lambda g}(\bm{\mathsf{x}}^{(2)}))$. Lines
\ref{CRegLS.prox.step.n.(1)} and \ref{CRegLS.prox.step.n.(2)} in \cref{algo:CRegLS} correspond to
$\prox_{\lambda h_n}(\cdot)$ and $\prox_{\lambda g}(\cdot)$, respectively. Due to the fact that
$\vect{R}_n$ is obtained by $\vect{R}_{n-1}$ via a rank-one modification, \ie,
$\vect{R}_n = (n-1)\vect{R}_{n-1}/n + \vect{a}_n \vect{a}_n^{\intercal}/n$, a way to compute
$(\vect{I} + \lambda\vect{R}_n)^{-1}$ efficiently via modifications of the eigen-decomposition of
$(\vect{I} + \lambda\vect{R}_{n-1})^{-1}$ can be deduced, for example, via arguments found in
\cite{Gu.Eisenstat.94, Livne.Brandt.02}; details are omitted. Quantities $\vect{R}_0$ and
$\vect{r}_0$ are arbitrarily fixed and used in Line~\ref{CRegLS.prox.step.(1)} of \cref{algo:CRegLS}
to initialize the iterative process.

\subsection{Main theoretical properties}\label{Sec:Main.Properties}

The main properties of S-FM-HSDM (\cref{algo:SFMHSDM}) are summarized in the following
\cref{main.thm,thm:exact.T}. To improve the readability of the manuscript, the detailed description
of the necessary assumptions is deferred to \cref{Sec:under.the.hood}. Nevertheless, as a high-level
description, \cref{as:affine.maps} gathers all the assumptions about the sequence of the
user-defined nonexpansive mappings $(T_n)_n$, such as asymptotic consistency, while \cref{as:losses}
refers to the loss functions $(f_n, h_n)_n$. The typical SA presupposition of asymptotic
unbiasedness is introduced in \cref{as:as.unbiasedness}. The technical \cref{as:domination} imposes
a summability constraint on the random variables (RVs) defined via \eqref{vartheta.start} and
\eqref{BH.thm}. A typical SA boundedness constraint of variances is introduced by
\cref{as:bounded.variances}, while \cref{as:bounded.subgrads} imposes the weak condition on loss
functions that bounded estimates imply bounded subgradients. \cref{as:ergodic.an.bn,as:IID} refer to
the special cases of \eqref{CRegLS} and \eqref{HLS}.

\begin{thm}\label{main.thm}
  Under \cref{as:affine.maps,as:losses,as:as.unbiasedness,as:domination,as:bounded.variances,as:bounded.subgrads}
  (see \cref{Sec:under.the.hood}), the set of cluster points $\mathfrak{C}[(x_n)_n]$ of the
  S-FM-HSDM sequence $(x_n)_n$ (\cref{algo:SFMHSDM}) is nonempty a.s. Furthermore, every point in
  the nonempty $\mathfrak{C}[(x_n)_n]$ is a solution of \eqref{the.problem} a.s.
\end{thm}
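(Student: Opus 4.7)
The plan is to prove both assertions — nonemptiness of $\mathfrak{C}[(x_n)_n]$ and optimality of every cluster point — through a stochastic quasi-Fej\'er monotonicity argument combined with the Robbins–Siegmund almost-supermartingale convergence theorem, followed by a subsequential-limit passage that uses the asymptotic consistency of $(T_n, \nabla f_n, h_n)$ supplied by \cref{as:affine.maps,as:losses,as:as.unbiasedness}.

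First I fix any solution $x^\star$ of \eqref{the.problem}. The constraint $x^\star \in \mathcal{A} = \Fix T$ gives $Tx^\star = x^\star = T^{(\alpha)} x^\star$, while the Fermat rule yields $0 \in \nabla f(x^\star) + \partial h(x^\star) + \partial g(x^\star)$. I would begin by unrolling the telescoping half-step in line~\ref{SFMHSDM.step.n.half} of \cref{algo:SFMHSDM} to reduce $x_{n+1/2}$ to a gradient/fixed-point step at $x_n$ plus a stochastic perturbation collecting the biases of $T_n^{(\alpha)}$ and $\nabla f_n$. The parameter range $\lambda \in (0, 2(1-\alpha)/L_{\nabla f})$ then renders $\Id - \lambda \nabla f$ averaged, so that in the noiseless limit the mapping taking $x_n$ to $x_{n+1}$ is a composition of averaged operators whose fixed-point set equals the solution set $\Omega^\star$ of \eqref{the.problem}. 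Expanding $\lVert x_{n+1}-x^\star\rVert^2$, and using the firm nonexpansiveness of $\prox_{\lambda(h+g)}$ and the $\alpha$-averaged nature of $T^{(\alpha)}$ (both following from $T\in\mathfrak{T}_{\mathcal{A}}$ as noted after \eqref{T.family}), I expect to arrive at a single-iteration inequality of the form
\[
\lVert x_{n+1}-x^\star\rVert^2 \leq \lVert x_n - x^\star\rVert^2 - \vartheta_n + 2\langle x_n - x^\star,\xi_n\rangle + \zeta_n,
\]
with $\vartheta_n \geq 0$ the deterministic descent quantity (matching the object in \eqref{vartheta.start}) collecting the squared residuals of the averaged mappings, and $\xi_n, \zeta_n$ lumping the stochastic perturbations due to $T_n - T$, $\nabla f_n - \nabla f$ and $h_n - h$ (matching \eqref{BH.thm}).

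Conditioning on the natural filtration $(\mathcal{F}_n)_n$, the asymptotic unbiasedness (\cref{as:as.unbiasedness}) and the summability constraint (\cref{as:domination}) turn $\Expect[\langle x_n - x^\star,\xi_n\rangle\mid\mathcal{F}_n]$ into an a.s.-summable remainder on the event where $(x_n)_n$ stays bounded, while \cref{as:bounded.variances,as:bounded.subgrads} likewise control $\Expect[\zeta_n\mid\mathcal{F}_n]$. Invoking Robbins–Siegmund I then obtain (i) that $\lVert x_n - x^\star\rVert$ converges a.s.\ for every $x^\star\in\Omega^\star$, hence $(x_n)_n$ is a.s.\ bounded and $\mathfrak{C}[(x_n)_n]\neq\emptyset$ on a full-measure event, and (ii) that $\sum_n \vartheta_n < \infty$ a.s., which, after algebraic rearrangement, implies $T x_n - x_n \to 0$, $\prox_{\lambda(h+g)}(x_n - \lambda \nabla f(x_n)) - x_n \to 0$, and $x_{n+1}-x_n \to 0$ along appropriate subsequences. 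For any cluster point $\bar{x}$ with $x_{n_k}\to \bar{x}$, the asymptotic consistency of $(T_n)_n$ (\cref{as:affine.maps}) gives $T_{n_k} x_{n_k}\to T\bar{x}$, hence $\bar{x}\in\Fix T=\mathcal{A}$; the Lipschitz continuity of $\nabla f$ and the demiclosedness of the graph of the maximal monotone operator $\nabla f + \partial(h+g)$ let me pass to the limit in the proximal identity to conclude $0\in\nabla f(\bar{x}) + \partial h(\bar{x})+\partial g(\bar{x})$, whence $\bar{x}\in\Omega^\star$ a.s.

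The main obstacle will be the simultaneous control of three independent stochastic perturbations $(T_n, \nabla f_n, h_n)$ under a \emph{constant} step size $\lambda$. Because $\prox_{\lambda(h_n+g)}$ depends nonlinearly on $h_n$, the difference $\prox_{\lambda(h_n+g)}(y) - \prox_{\lambda(h+g)}(y)$ is not a mere subtraction of $h_n - h$; to produce a summable pathwise bound I expect to need Moreau-envelope calculus together with \cref{as:domination} and an a priori boundedness estimate on $(x_n)_n$. Compounding this is a bootstrapping issue: the Robbins–Siegmund step demands summability of the noise conditional on a bounded orbit, yet the a.s.\ boundedness of the orbit is itself a conclusion of Robbins–Siegmund. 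I would break this circularity in the standard way, by applying Robbins–Siegmund to a suitably stopped sequence and then extending the conclusion off the stopping time by means of \cref{as:bounded.subgrads} and a truncation/localization argument on the noise martingales.
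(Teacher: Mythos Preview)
Your proposal has a structural gap that would prevent it from going through. First, the Fermat rule you invoke, $0 \in \nabla f(x^\star) + \partial h(x^\star) + \partial g(x^\star)$, is the optimality condition for the \emph{unconstrained} problem; for \eqref{the.problem} the correct condition is $[\nabla f(x^\star) + \partial(h+g)(x^\star)] \cap \range U \neq \emptyset$ (\cref{fact:Ypsilon.star}), i.e.\ the normal cone to $\mathcal{A}$ enters. Consequently the residual $\prox_{\lambda(h+g)}(x_n - \lambda\nabla f(x_n)) - x_n$ need not vanish along the iterates, and your final demiclosedness step characterizes the wrong set. Second, after telescoping line~\ref{SFMHSDM.step.n.half} you do \emph{not} obtain ``a gradient/fixed-point step at $x_n$ plus a stochastic perturbation'': what remains is the accumulated sum $w_{n+1}=(1-\alpha)\sum_{\nu\leq n+1}(\Id-T_\nu)x_\nu$ (see \eqref{define.w}), which in the noiseless limit is $Uv_{n+1}$ with $v_{n+1}$ a genuine dual variable, not a summable error. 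The mapping $x_n\mapsto x_{n+1}$ is therefore not a composition of averaged operators on $\mathcal{X}$, and $(x_n)_n$ alone is not quasi-Fej\'er w.r.t.\ the solution set of \eqref{the.problem}. You also misread the role of $\vartheta_n$: in the paper it is the \emph{stochastic noise} term (lines \eqref{vartheta.start}--\eqref{BH.thm}), not the descent quantity.

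The device that repairs all of this is the paper's lifting $y_n=(x_n,v_n)\in\mathcal{X}^2$, equipped with the weighted inner product $\innerp{\cdot}{\cdot}_{\mathcal{X}^2_\Theta}$, and the primal--dual optimality set $\Upsilon_*^{(\lambda)}$. The quasi-Fej\'er inequality \eqref{Fejer.cond} is established for $(y_n)_n$ w.r.t.\ $\Upsilon_*^{(\lambda)}$; this simultaneously yields a.s.\ boundedness of $(x_n)_n$ and of the accumulated dual $(v_n)_n$, and the summable descent term is $\norm{y_{n+1}-y_n}^2_{\mathcal{X}^2_\Theta}$, from which $(\Id-T)x_n\to 0$ and the limit relation \eqref{reach.optimality} (the correct constrained KKT condition) follow. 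Without the lifting, there is no way to control the growth of $w_n$ nor to produce the $\range U$ component required for optimality in \eqref{the.problem}.
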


\begin{proof}
  See \cref{app:main.thm}.
\end{proof}

\begin{thm}\label{thm:exact.T}
  Consider the case where $T$ is known exactly, \ie, $T=T_n$, $\forall n$. Then, under the same
  setting as in \cref{main.thm}, but without
  \crefnosort{as:affine.maps,as:bounded.var.pi,as:as.bounded.x.as.bounded.xi,as:l2.bounded.x.l2.bounded.xi},
  the sequence $(x_n)_n$ generated by \cref{algo:SFMHSDM} converges a.s.\ to a solution of
  \eqref{the.problem}.
\end{thm}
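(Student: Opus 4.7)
My plan is to bootstrap from \cref{main.thm} to promote ``every cluster point is a solution'' into point-wise almost-sure convergence, the standard route being a stochastic Opial/Fej\'{e}r argument. With $T_n \equiv T$, the dropped hypotheses \cref{as:affine.maps,as:bounded.var.pi,as:as.bounded.x.as.bounded.xi,as:l2.bounded.x.l2.bounded.xi}---all of which constrain the stochasticity of $(T_n)_n$---become vacuous, so \cref{main.thm} still applies and yields that a.s.\ the cluster-point set $\mathfrak{C}[(x_n)_n]$ is nonempty and contained in the solution set $S$ of \eqref{the.problem}. It remains to show that, almost surely, there is only one cluster point.

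The core step is to derive, for an arbitrarily fixed $z\in S$, a stochastic quasi-Fej\'{e}r recursion
\begin{equation*}
  \Expect[\norm{x_{n+1}-z}^2 \mid \mathcal{F}_n] \leq \norm{x_n - z}^2 - \gamma_n + \epsilon_n\,,
\end{equation*}
with $\gamma_n \geq 0$ and $\sum_n \epsilon_n < \infty$ a.s., where $(\mathcal{F}_n)_n$ is the natural filtration generated by the stochastic oracle up to time $n$. The ingredients I intend to combine are: nonexpansiveness of $T^{(\alpha)}$ for $\alpha\in[0.5,1)$, inherited from $T\in\mathfrak{T}_{\mathcal{A}}$; firm nonexpansiveness of $\prox_{\lambda(h_n+g)}$ together with the fact that $Tz = z$ for $z\in S\subseteq \mathcal{A}$; a descent-type estimate for the forward step $x - \lambda\nabla f(x)$ yielded by the step-size restriction $\lambda\in(0, 2(1-\alpha)/L_{\nabla f})$ (a Baillon--Haddad/cocoercivity-style argument); and the stochastic-noise assumptions \cref{as:as.unbiasedness,as:domination,as:bounded.variances,as:bounded.subgrads}, which are precisely what is needed to route the error terms produced by $\nabla f_n - \nabla f$ and by substituting $h_n$ for $h$ inside the proximal step into a summable residual $(\epsilon_n)_n$.

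Once the quasi-Fej\'{e}r inequality is available, the Robbins--Siegmund theorem delivers a.s.\ convergence of $(\norm{x_n-z})_n$ for every $z\in S$ and $\sum_n \gamma_n <\infty$ a.s. Combining this with $\mathfrak{C}[(x_n)_n]\subseteq S$ from \cref{main.thm}, the classical Opial argument in the finite-dimensional Hilbert space $\mathcal{X}$ finishes the proof: if $z_1, z_2$ were two distinct cluster points, both would lie in $S$, both $(\norm{x_n - z_i})_n$ would converge, and one would obtain contradictory limit values along the subsequences realising $z_1$ and $z_2$. Hence $(x_n)_n$ has a unique cluster point a.s., and thus converges a.s.\ to a solution of \eqref{the.problem}.

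The main obstacle will be the quasi-Fej\'{e}r inequality itself. The two-step structure of lines~\ref{SFMHSDM.step.n.half}--\ref{SFMHSDM.prox.step.n} of \cref{algo:SFMHSDM} couples $x_{n+1}$ not only to $x_n$ but also to $x_{n-1/2}$ and $x_{n-1}$, so the right Lyapunov quantity is unlikely to be $\norm{x_n-z}^2$ in isolation and will probably be a weighted sum involving a ``memory'' term such as $\norm{x_{n-1/2}-z}^2$ or $\norm{x_n-x_{n-1}}^2$. Extracting from this coupled recursion a clean one-step inequality in the Robbins--Siegmund form, while simultaneously controlling all stochastic residuals so that they are summable under the retained assumptions, is the technical heart of the argument; everything else (Robbins--Siegmund plus Opial) is routine once that recursion is in place.
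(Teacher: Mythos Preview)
There are two genuine gaps.

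First, your claim that the dropped hypotheses ``all constrain the stochasticity of $(T_n)_n$'' and therefore become vacuous when $T_n\equiv T$ is incorrect for \cref{as:as.bounded.x.as.bounded.xi,as:l2.bounded.x.l2.bounded.xi}. Those two assumptions concern the subgradients $\xi_n\in\partial(h_{n-1}+g)(x_n)$ produced by the proximal step, not the mappings $T_n$; they do not simplify when $T_n=T$. Consequently you cannot simply invoke \cref{main.thm} to obtain $\mathfrak{C}[(x_n)_n]\subset S$ under the retained assumptions. The paper's proof of \cref{main.thm} uses those two assumptions only to bound the auxiliary sequence $(w_n)_n$ defined in \eqref{define.w}; the point of \cref{thm:exact.T} is that when $T_n=T$ one has the identity $w_n=Uv_n$ (compare \eqref{define.w} and \eqref{define.v}), so boundedness of $(w_n)_n$ comes for free from that of $(v_n)_n$ and the argument goes through without \cref{as:as.bounded.x.as.bounded.xi,as:l2.bounded.x.l2.bounded.xi}.

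Second, and more structurally, your plan to manufacture a stochastic quasi-Fej\'{e}r inequality for $(x_n)_n$ in $\mathcal{X}$ with respect to the solution set $S$ targets the wrong space. The algorithm has a hidden primal--dual structure: the proof of \cref{main.thm} lifts to $y_n\coloneqq(x_n,v_n)\in\mathcal{X}^2$, with $v_n$ the cumulative ``dual'' variable of \eqref{define.v}, and establishes stochastic Fej\'{e}r monotonicity of $(y_n)_n$ in the metric $\norm{\cdot}_{\mathcal{X}^2_\Theta}$ with respect to the primal--dual solution set $\Upsilon_*^{(\lambda)}$ of \cref{fact:Ypsilon.star} (see \eqref{Fejer.cond}). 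The ``memory'' terms you anticipate, such as $\norm{x_{n-1/2}-z}^2$ or $\norm{x_n-x_{n-1}}^2$, will not recover this; the correct extra piece of the Lyapunov is $\tfrac{1}{1-\alpha}\norm{v_n-v_*}^2$, which involves an \emph{unbounded} running sum of the primal iterates. The paper's route to convergence is then: when $T_n=T$, the relation $w_n=Uv_n$ upgrades the conclusion of \eqref{reach.optimality} to $\nabla f(\bar{x})+\bar{\xi}=-\tfrac{1}{\lambda}U\bar{v}$, so every cluster point $\bar{y}=(\bar{x},\bar{v})$ of $(y_n)_n$ lies in $\Upsilon_*^{(\lambda)}$ (not merely $\bar{x}\in\mathcal{A}_*$ as in \cref{main.thm}); stochastic Fej\'{e}r monotonicity of $(y_n)_n$ w.r.t.\ $\Upsilon_*^{(\lambda)}$ then forces $(y_n)_n$, and hence $(x_n)_n$, to converge a.s. Your Opial step is the right idea, but it must be run on $(y_n)_n$ in $\mathcal{X}^2_\Theta$ against $\Upsilon_*^{(\lambda)}$, not on $(x_n)_n$ against $S$.
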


\begin{proof}
See \cref{app:exact.T}.
\end{proof}

It is worth mentioning here that the qualifier "FM" in S-FM-HSDM comes from the deterministic
predecessor FM-HSDM~\cite{FM-HSDM.Optim.18} and the Fej\'{e}r-monotonicity property of
\eqref{Fejer.cond} and \eqref{Fejer.cond.type3} in \cref{app:main.thm}.

Since HRLS and S-FM-HSDM(CRegLS) are offsprings of S-FM-HSDM, assertions about their convergence
properties can be deduced from \cref{main.thm,thm:exact.T} and can take various forms. This study
avoids to provide an exhaustive list of all such assertions with their forms, but brings only a
couple of examples in the form of the following corollaries.

\begin{coroll}\label{cor:HRLS} Let \crefnosort{as:ergodic.an.bn,as:domination,as:bounded.var.pi} hold
  true. Assume also that the stochastic process $(\vect{a}_n)_n$ possesses a non-singular
  $\vect{R}$, and that $\exists \varpi\in \RealPP$ s.t.\
  $\varpi_n \coloneqq \varpi \geq \max\Set{\norm{\vect{R}}, \norm{\vect{R}_n}}$, a.s., $\forall
  n$. Then, the set of cluster points of the sequence $(\vect{x}_n)_n$, generated by either HRLSa or
  HRLSb, is non-empty, and any of its cluster points is a solution of \eqref{HLS} a.s.
\end{coroll}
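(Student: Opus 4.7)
The plan is to deduce the corollary from \cref{main.thm} by specializing that theorem to the HRLS setting, in which $f \equiv f_n \equiv 0$, $h \equiv h_n \equiv 0$, $g(\cdot) = \norm{\cdot}_1$, and $T_n$ is given by \eqref{Tn.grad} (HRLSa) or \eqref{Tn.prox} (HRLSb). Under this specialization, the six assumptions of \cref{main.thm} either become vacuous (those on $f_n,h_n$), are directly postulated in the corollary (\cref{as:domination} and the sub-condition \cref{as:bounded.var.pi}), or must be checked from the structural properties of the Cesaro estimates $(\vect{R}_n,\vect{r}_n)$ in \eqref{Rn.rn}.

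First, I would verify \cref{as:affine.maps}. \cref{lem:T.LS} already shows $T\in \mathfrak{T}_{\mathcal{A}}$, where $\mathcal{A} = \Set{\vect{x}\given \vect{Rx}=\vect{r}}$; the non-singularity of $\vect{R}$ makes $\mathcal{A}$ the singleton $\Set{\vect{R}^{-1}\vect{r}}$, which is nonempty as required. The same lemma yields the decomposition $T_n = Q_n + \pi_n$ with $Q_n$ positive and $\norm{Q_n}\leq 1$ a.s. The remaining piece is asymptotic consistency $T_n\to T$, which I extract from \cref{as:ergodic.an.bn}: $\vect{R}_n\to \vect{R}$ and $\vect{r}_n\to \vect{r}$ a.s. Because $\varpi_n\equiv \varpi$ is deterministic by hypothesis, the map $(\vect{R}_n,\vect{r}_n)\mapsto T_n$ is continuous — affine for HRLSa, and for HRLSb via continuity of $\vect{X}\mapsto (\vect{I}+\kappa\vect{X})^{-1}$ on the positive semidefinite cone — so $T_n\to T$ a.s.

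Second, \cref{as:losses,as:bounded.subgrads} and the $(f_n,h_n)$-components of \cref{as:as.unbiasedness,as:bounded.variances} are vacuous, since $f,h,f_n,h_n$ all vanish. Unbiasedness of $T_n$ is in fact exact: because $\Expect(\vect{a}_n\vect{a}_n^{\intercal})=\vect{R}$ and $\Expect(b_n\vect{a}_n)=\vect{r}$ are constant in $n$, one has $\Expect(\vect{R}_n)=\vect{R}$ and $\Expect(\vect{r}_n)=\vect{r}$ for every $n$, and the deterministic $\varpi$ propagates this to $\Expect(T_n)=T$. The component of \cref{as:bounded.variances} that concerns $\pi_n$ is exactly \cref{as:bounded.var.pi}, while the one concerning $Q_n$ is controlled either directly by the affine dependence $Q_n=\vect{I}-(\mu/\varpi)\vect{R}_n$ (HRLSa), or, for HRLSb, through the resolvent identity $(\vect{I}+\kappa\vect{R}_n)^{-1}-(\vect{I}+\kappa\vect{R})^{-1}=\kappa(\vect{I}+\kappa\vect{R}_n)^{-1}(\vect{R}-\vect{R}_n)(\vect{I}+\kappa\vect{R})^{-1}$, whose operator norm is bounded by $\kappa\norm{\vect{R}-\vect{R}_n}$ because $\vect{R}_n,\vect{R}\succeq\vect{0}$ implies $\norm{(\vect{I}+\kappa\vect{R}_n)^{-1}}\leq 1$ and $\norm{(\vect{I}+\kappa\vect{R})^{-1}}\leq 1$; the relevant second moment is then controlled via \cref{as:ergodic.an.bn}. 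With all six assumptions in place, \cref{main.thm} delivers that $\mathfrak{C}[(\vect{x}_n)_n]$ is non-empty and every cluster point solves \eqref{HLS} a.s.

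The principal obstacle is the variance bookkeeping for HRLSb, because the nonlinear dependence of $T_n$ on $\vect{R}_n$ forces one to push the moment control supplied by \cref{as:ergodic.an.bn} and \cref{as:bounded.var.pi} through the matrix-inversion step. The resolvent identity together with the uniform bound $\norm{(\vect{I}+\kappa\vect{R}_n)^{-1}}\leq 1$ should suffice, but matching precisely the norm in which \cref{as:bounded.variances} is stated may require some case analysis; the HRLSa case is considerably easier because $T_n$ depends affinely on $(\vect{R}_n,\vect{r}_n)$.
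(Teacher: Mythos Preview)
Your overall strategy---specialize \cref{main.thm} to the HRLS setting with $f\equiv f_n\equiv h\equiv h_n\equiv 0$ and $g=\norm{\cdot}_1$, then check the six assumptions---is exactly the paper's route. The paper, however, outsources the verification: \cref{as:affine.maps} is dispatched in one line by invoking \cref{lem:Tn.LS} (which already contains your continuity argument for \cref{as:consistent.Tn} and, crucially, handles \cref{as:unbiased.Tn} by observing that the non-singularity of $\vect{R}$ forces $\range(\Id-Q)=\mathcal{X}$, so the cluster-point condition is trivial), and \cref{as:bounded.subgrads} is handled by \cref{lem:bounded.subgrads}.

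Two inaccuracies in your proposal are worth flagging. First, \cref{as:bounded.subgrads} is \emph{not} vacuous: even with $h_n\equiv 0$ the subgradients of $g=\norm{\cdot}_1$ must be controlled, which is what \cref{lem:bounded.subgrads} does (the bound $\norm{\bm{\tau}}\leq\sqrt{D}$ is elementary, but the assumption still has content). Second, and more importantly, your ``principal obstacle''---the variance bookkeeping for $Q_n$ in HRLSb via the resolvent identity---is a phantom: \cref{as:bounded.variances} has no component concerning $Q_n$. It requires only \cref{as:bounded.var.gradfn} (trivial here since $\nabla f\equiv 0$) and \cref{as:bounded.var.pi} (assumed directly in the corollary). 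So the HRLSb case needs no extra work beyond HRLSa, and the paper's proof treats both uniformly without any matrix-inversion analysis.
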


\begin{proof}
See \cref{app:HRLS}.
\end{proof}

As a postscript to \cref{cor:HRLS}, recall that the matrix $\vect{R}$ of \textit{any}\/ regular
process $(\vect{a}_n)_n$, \ie, a process with non-zero innovation~\cite[\S2.6]{Porat.book}, is
non-singular~\cite[Prob.~2.2]{Porat.book}.

\begin{coroll}\label{cor:S-FM-HSDM(CRegLS)} Let
  \crefnosort{as:ergodic.an.bn,as:unbiased.hn,as:domination} hold true. Then, the sequence
  $(\vect{x}_n)_n$ generated by \cref{algo:CRegLS} converges a.s.\ to a solution of \eqref{CRegLS}.
\end{coroll}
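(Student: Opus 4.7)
The plan is to recognize \cref{algo:CRegLS} as an instance of \cref{algo:SFMHSDM} applied to the splitting reformulation \eqref{split.variables.LS.problem}, and to then invoke \cref{thm:exact.T} rather than \cref{main.thm}. The crucial observation is that in the splitting reformulation the constraint $\mathcal{A}=\Set{(\bm{\mathsf{x}}^{(1)},\bm{\mathsf{x}}^{(2)}) \given \bm{\mathsf{x}}^{(1)}=\bm{\mathsf{x}}^{(2)}}$ is a deterministic linear subspace known to the user; accordingly, one fixes $T_n\equiv T\coloneqq P_{\mathcal{A}}$, which by \cite[Prop.~2.11]{FM-HSDM.Optim.18} lies in $\mathfrak{T}_{\mathcal{A}}$, so the ``exact $T$'' premise of \cref{thm:exact.T} is trivially met. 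Because the affine operator is exact, \cref{thm:exact.T} delivers full almost-sure convergence of $(\vect{x}_n)_n$ to a solution—the stronger conclusion that the corollary asks for, rather than the cluster-point conclusion of \cref{main.thm}.

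What remains is to verify that the remaining hypotheses of \cref{thm:exact.T} are implied by the corollary's standing assumptions \crefnosort{as:ergodic.an.bn,as:unbiased.hn,as:domination}. Setting $(f,f_n)\coloneqq (0,0)$ trivially discharges the smoothness part of \cref{as:losses} (one may pick any $L_{\nabla f}>0$); the stochastic term $h_n(\bm{\mathsf{x}}^{(1)})\coloneqq (1/2n)\sum_{\nu=1}^n(\vect{a}_\nu^{\intercal}\bm{\mathsf{x}}^{(1)}-b_\nu)^2$ is a proper, lower-semicontinuous, convex quadratic, and $g(\bm{\mathsf{x}}^{(2)})\coloneqq \rho\norm{\bm{\mathsf{x}}^{(2)}}_1$ is proper, lower-semicontinuous and convex. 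The asymptotic-unbiasedness portion of \cref{as:as.unbiasedness} is supplied by \cref{as:unbiased.hn}, while \cref{as:domination} is assumed outright.

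The last pieces—the variance and bounded-subgradient conditions of \cref{Sec:under.the.hood}—are inherited from \cref{as:ergodic.an.bn}. Stationarity together with finite second moments of $(\vect{a}_n,b_n)_n$ ensures that the running averages $(\vect{R}_n,\vect{r}_n)$ in \eqref{Rn.rn} converge a.s.\ to $(\vect{R},\vect{r})$ and stay bounded a.s.; because $\partial h_n(\bm{\mathsf{x}}^{(1)})=\{\vect{R}_n\bm{\mathsf{x}}^{(1)}-\vect{r}_n\}$ is affine in $\bm{\mathsf{x}}^{(1)}$ with a.s.-bounded coefficients, bounded iterates map to bounded subgradients and the required conditional-variance estimates on $\nabla h_n$ follow. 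I expect the main obstacle to lie precisely in this translation step: the hypotheses of \cref{Sec:under.the.hood} are phrased as quantitative summability and variance-rate conditions, and one has to match them carefully to what stationarity plus the standard $1/n$-rate concentration of running averages actually delivers—a bookkeeping exercise that is routine, but where one must be scrupulous about measurability with respect to the natural filtration of $(\vect{a}_n,b_n)_n$ and about the uniformity of the bounds over the (almost surely bounded) iterate sequence.
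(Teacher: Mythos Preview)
Your approach is correct and essentially identical to the paper's: recognize the splitting \eqref{split.variables.LS.problem}, set $T_n\equiv T=P_{\mathcal{A}}$, take $(f,f_n)=(0,0)$, and invoke \cref{thm:exact.T}. Your final worry is overblown, however: with $(f,f_n)=(0,0)$ the variance condition \cref{as:bounded.var.gradfn} (and \cref{as:unbiased.gradfn}) is vacuous---there is no ``conditional-variance estimate on $\nabla h_n$'' to verify---so the only remaining check is \cref{as:bounded.tau}, which the paper dispatches in one line via \cref{lem:bounded.subgrads} (using \cref{as:ergodic.an.bn}); no $1/n$-rate concentration or delicate measurability bookkeeping is required.
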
 

\begin{proof}
See \cref{app:S-FM-HSDM(CRegLS)}.
\end{proof}

\section{Performance Analysis of S-FM-HSDM}\label{Sec:under.the.hood}

Rather than simply listing all assumptions needed for \cref{main.thm,thm:exact.T}, as well as for
\cref{cor:HRLS,cor:S-FM-HSDM(CRegLS)}, this section follows a more instructive route by exemplifying
the assumptions in the context of \eqref{CRegLS} and \eqref{HLS}. With symbol $\limas_n$ introduced
in \cref{app:preliminaries}, the following assumptions are imposed on the mappings $T$ and
$(T_n)_n$.

\begin{assumption}[Mappings $T$ and $T_n$]\label{as:affine.maps}\mbox{}
  \begin{asslist}
  \item\label[assumption]{as:T} $T\in \mathfrak{T}_{\mathcal{A}}$.
  \item\label[assumption]{as:Tn} $T_n \coloneqq Q_n + \pi_n$, where mapping
    $Q_n: \mathcal{X} \to \mathcal{X}$ is positive, with $\norm{Q_n} \leq 1$, and
    $\pi_n\in \mathcal{X}$, a.s., $\forall n$.
  \item\label[assumption]{as:consistent.Tn} $(T-T_n) \limas_n 0$, \ie, $(T-T_n)x \limas_n 0$, $\forall
    x\in\mathcal{X}$, or, equivalently, $(Q-Q_n) \limas_n 0$ and $(\pi-\pi_n) \limas_n 0$.
  \item\label[assumption]{as:unbiased.Tn} Define $\forall n$,
    $t_n \coloneqq \Expect_{\given\mathcal{F}_n} \left[ \sum\nolimits_{\nu=1}^n (T-T_{\nu})x_{\nu}
    \right]$. All cluster points of any bounded subsequence of $(\Expect(t_n))_n$ belong to
    $\range(\Id-Q)$. \qedhere
  \end{asslist}
\end{assumption}

\noindent To underline the generality of \cref{as:affine.maps}, the following popular
\cref{as:ergodic.an.bn}, placed in the context of \cref{Sec:system.id} and \eqref{Rn.rn}, provides a
special case of \cref{as:affine.maps}, as \cref{lem:Tn.LS} demonstrates.

\begin{assumption}[Pointwise ergodicity]\label{as:ergodic.an.bn}
  $\bm{\mathcal{E}}^R_n \coloneqq \vect{R}-\vect{R}_n \limas_n \vect{0}$ and
  $\bm{\varepsilon}^r_n \coloneqq \vect{r}-\vect{r}_n \limas_n \vect{0}$.
\end{assumption}

\noindent To save space, a discussion on conditions which suffice to guarantee
\cref{as:ergodic.an.bn}, such as statistical independency or mixing conditions~\cite{Andrews.88,
  Petersen}, via the strong law of large numbers, is omitted. Notice that due to
\cref{as:ergodic.an.bn}, $(\vect{R}_n)_n$ is bounded a.s.; hence,
$\exists \varpi \coloneqq \varpi(\omega) \geq \max \Set{\norm{\vect{R}}, \norm{\vect{R}_n}}$, a.s.,
$\forall n$ (symbol $\omega$ is introduced in \cref{app:preliminaries}).

\begin{lemma}\label{lem:Tn.LS} Assume that $\exists \varpi\in\RealPP$ s.t.\
  $\varpi_n \coloneqq \varpi \geq \max \Set{\norm{\vect{R}}, \norm{\vect{R}_n}}$, a.s., $\forall n$,
  and that the matrix $\vect{R}$ of the stochastic process $(\vect{a}_n)_n$ is non-singular. Then,
  under also \cref{as:ergodic.an.bn}, mappings \eqref{Tn.LS} satisfy
  \cref{as:Tn,as:consistent.Tn}. Moreover, for any $T\in \mathfrak{T}_{\mathcal{A}}$ and any of its
  estimates $(T_n)_n$, \cref{as:unbiased.Tn} holds true. \qedhere

\end{lemma}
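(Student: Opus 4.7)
The plan is to split the lemma into three verifications: (i) the structural form of $T_n$ together with positivity and contractivity of $Q_n$; (ii) the pointwise a.s.\ consistency $T - T_n \limas_n 0$; and (iii) the range condition of \cref{as:unbiased.Tn}, for which non-singularity of $\vect{R}$ is the decisive ingredient. Claim (i) is essentially a pathwise re-run of the argument already used in \cref{lem:T.LS}, so the bulk of the new content sits in (ii) and (iii).

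For \cref{as:Tn}, I would fix a sample point and note that $\vect{R}_n$ is symmetric and positive semi-definite. Under $\varpi_n \geq \norm{\vect{R}_n}$ and $\mu \in (0,1]$, the gradient-form $Q_n = \vect{I} - (\mu/\varpi_n)\vect{R}_n$ has eigenvalues in $[1-\mu,1] \subseteq [0,1]$; for $\kappa > 0$, the proximal-form $Q_n = (\vect{I} + \kappa \vect{R}_n)^{-1}$ has eigenvalues in $(0,1]$. Either way $Q_n$ is self-adjoint, positive, and satisfies $\norm{Q_n} \leq 1$ pathwise, and $\pi_n$ is read directly off \eqref{Tn.LS}.

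For \cref{as:consistent.Tn}, the hypothesis $\varpi_n = \varpi$ collapses the gradient form to an affine map whose linear and translation parts are $-(\mu/\varpi)\bm{\mathcal{E}}^R_n$ and $(\mu/\varpi)\bm{\varepsilon}^r_n$, both tending to $\vect{0}$ a.s.\ by \cref{as:ergodic.an.bn}. For the proximal form, I would apply the resolvent identity
\begin{equation*}
(\vect{I} + \kappa\vect{R})^{-1} - (\vect{I} + \kappa\vect{R}_n)^{-1} = \kappa (\vect{I} + \kappa\vect{R})^{-1} \bm{\mathcal{E}}^R_n (\vect{I} + \kappa\vect{R}_n)^{-1}
\end{equation*}
and invoke the uniform bound $\norm{(\vect{I} + \kappa\vect{R}_n)^{-1}} \leq 1$, which follows from positivity of $\vect{R}_n$, to convert $\bm{\mathcal{E}}^R_n \limas_n \vect{0}$ into $Q - Q_n \limas_n \vect{0}$; a parallel manipulation on $\kappa(\vect{I} + \kappa \vect{R}_n)^{-1} \vect{r}_n$ yields $\pi - \pi_n \limas_n \vect{0}$.

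For \cref{as:unbiased.Tn}, I would exploit non-singularity of $\vect{R}$ to reduce $\mathcal{A}$ to the singleton $\Set{\vect{R}^{-1}\vect{r}}$. Then, for any $T = Q + \pi \in \mathfrak{T}_{\mathcal{A}}$, the equation $(\Id - Q)\vect{x} = \pi$ has exactly one solution, so $\Id - Q$ is injective; in the finite-dimensional $\mathcal{X}$ injectivity is equivalent to surjectivity, whence $\range(\Id - Q) = \mathcal{X}$. The range condition on the cluster points of $(\Expect(t_n))_n$ then holds trivially for any choice of $(T_n)_n$, which is what makes the last assertion independent of the specific form \eqref{Tn.LS}. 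The only obstacle I foresee is ensuring that the operator-norm controls used in step (ii) are uniform in both $n$ and $\omega$, but positive semi-definiteness of each $\vect{R}_n$ delivers this uniformity pathwise.
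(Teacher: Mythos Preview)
Your proposal is correct and follows essentially the same route as the paper: a pathwise eigenvalue check for \cref{as:Tn}, ergodicity plus continuity of the resolvent for \cref{as:consistent.Tn}, and the observation that non-singularity of $\vect{R}$ forces $\ker(\Id-Q)=\{0\}$ and hence $\range(\Id-Q)=\mathcal{X}$ for \cref{as:unbiased.Tn}. The only slip is a sign in your resolvent identity (the right-hand side should carry a minus, since $A^{-1}-B^{-1}=A^{-1}(B-A)B^{-1}$ with $B-A=-\kappa\bm{\mathcal{E}}^R_n$), but this is immaterial for the norm estimate you need.
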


\begin{proof}
  See \cref{app:Tn.LS}.
\end{proof}

\begin{assumption}[Loss functions]\label{as:losses}\mbox{}
  \begin{asslist}
    
  \item\label[assumption]{as:convex.losses} $f, h, g: \mathcal{X}\to \Real\cup \Set{+\infty}$ belong
    to the class $\Gamma_0(\mathcal{X})$ of proper, lower semicontinuous (l.s.c.), convex
    functions~\cite{HB.PLC.book}.
    
  \item\label[assumption]{as:f.Lipschitz} $f$ is everywhere (Fr\'{e}chet) differentiable, with
    $L_{\nabla f}$-Lipschitz continuous $\nabla f$:
    $\norm{\nabla f(x) - \nabla f(x')} \leq L_{\nabla f} \norm{x-x'}$,
    $\forall (x,x')\in \mathcal{X} \times \mathcal{X}$, for some $L_{\nabla f}\in\RealPP$. Moreover,
    for any sub-$\sigma$-algebra $\mathcal{G}$ of $\Sigma$ (\cf \cref{app:preliminaries}) and
    $\forall x\in \mathrm{m}\mathcal{G}$, $\nabla f(x)\in \mathrm{m}\mathcal{G}$.
    
  \item\label[assumption]{as:convex.losses.n} $f_n, h_n\in\Gamma_0(\mathcal{X})$ a.s., $\forall n$.
    
  \item\label[assumption]{as:fn.Lipschitz} $f_n$ is everywhere (Fr\'{e}chet) differentiable, with
    $L_n$-Lipschitz continuous $\nabla f_n$ a.s., $\forall n$.
    
  \item\label[assumption]{as:bounded.Lips} There exist $n_{\mypound} \in\IntegerP$ and a
    $C_{\text{Lip}}\in \RealPP$, which is constant over all $\omega\in\Omega$, s.t.\
    $L_n \leq C_{\text{Lip}}$ a.s., $\forall n\geq n_{\mypound}$.
    
  \item\label[assumption]{as:consistent.gradfn} $(\nabla f - \nabla f_n) \limas_n 0$. \qedhere
    
  \end{asslist}
\end{assumption}

\noindent It can be readily verified in the context of \eqref{CRegLS} that
\cref{as:convex.losses,as:f.Lipschitz,as:convex.losses.n,as:fn.Lipschitz} are satisfied by either
$(f, f_n) \coloneqq (l, l_n)$, or, $(h, h_n) \coloneqq (l, l_n)$. If $f_n \coloneqq l_n$, then
$\forall \vect{x}\in \mathcal{X}$,
\begin{align}
  \nabla f_n(\vect{x})
  & = \tfrac{1}{n} \sum\nolimits_{\nu=1}^n \vect{a}_{\nu}
    \left( \vect{a}_{\nu}^{\intercal} \vect{x} - b_{\nu} \right) = \vect{R}_n \vect{x} -
    \vect{r}_n \,. \label{grad.fn.LS}
\end{align}
Scalar $L_n \coloneqq \norm{\vect{R}_n}$ can be considered as $\nabla f_n$'s Lipschitz
coefficient. Hence, if $(\vect{R}_n)_n$ is uniformly bounded over $\Omega$, \ie,
$\exists C_{\text{Lip}}$ s.t.\ $\norm{\vect{R}_n} \leq C_{\text{Lip}}$, a.s., $\forall n$, then
\cref{as:bounded.Lips} holds true. On the other hand, if the uniform boundedness of $(\vect{R}_n)_n$
cannot be guaranteed, and since the current framework places no requirements on the uniform
boundedness of the subgradients of $(h_n)_n$, \eqref{the.problem} offers the flexibility to set
$h_n \coloneqq l_n$ and $f_n \coloneqq 0$, for which \cref{as:bounded.Lips} holds trivially
true. Given that $\nabla f (\vect{x}) = \vect{Rx} - \vect{r}$, $\forall\vect{x} \in \mathcal{X}$,
whenever $f = l$, it can be verified via \cref{as:ergodic.an.bn} and \eqref{grad.fn.LS} that
\cref{as:consistent.gradfn} holds true.

\begin{assumption}[Asymptotic unbiasedness]\label{as:as.unbiasedness}\mbox{}
  \begin{asslist}
  \item\label[assumption]{as:unbiased.hn} For any $x\in\mathcal{X}$,
    $\Expect_{\given \mathcal{F}_n} [(h-h_{n})(x)] \eqqcolon \varepsilon_n^h(x) \limas_n 0$ and
    $\varepsilon_n^h(x_n) \limas_n 0$.
  \item\label[assumption]{as:unbiased.gradfn}
    $\Expect_{\given \mathcal{F}_n} [(\nabla f - \nabla f_n)(x_n)] \eqqcolon \varepsilon_n^f(x_n)
    \limas_n 0$. \qedhere
  \end{asslist}
\end{assumption}

\noindent Asymptotic unbiasedness appears often in SA, \eg, \cite[p.~132,
Thm.~2.3]{Kushner.Yin}. \cref{lem:unbiasedness.LS} presents cases where \cref{as:as.unbiasedness}
holds true. Several of the results of \cref{lem:unbiasedness.LS} serve also as intermediate steps
that justify the introduction of \cref{as:domination}; more precisely, \eqref{strict.conditions}
suffice for \cref{as:domination} to hold true. To prove the claims of \cref{lem:unbiasedness.LS},
the following assumption is needed.

\begin{assumption}\label{as:IID}
  Motivated by \cite[p.~162]{Kushner.Yin}, the approximation errors $\bm{\mathcal{E}}_n^R$ and
  $\bm{\varepsilon}_n^r$ (\cf~\cref{as:ergodic.an.bn}) are assumed here to be \textit{exogenous}\/
  w.r.t.\ $\mathcal{F}_n \coloneqq \sigma(\Set{x_{\nu}}_{\nu=0}^n)$ for any $n$. In other words,
  $\bm{\mathcal{E}}_n^R$, $\bm{\varepsilon}_n^r$, which are provided by the stochastic oracle, are
  considered to be independent of the past history $\mathcal{F}_n$ of the iterates: $\forall n$ and
  a.s., $\Expect_{\given\mathcal{F}_n} (\bm{\mathcal{E}}_n^R) = \Expect (\bm{\mathcal{E}}_n^R)$,
  $\Expect_{\given\mathcal{F}_n} (\bm{\varepsilon}_n^r) = \Expect (\bm{\varepsilon}_n^r)$.
  Moreover, the stochastic process $(\vect{a}_n)_n$ is assumed to be independent and identically
  distributed (IID) and $\mathcal{F}_n$ is considered to be conditionally independent with
  $\sigma(\Set{\vect{a}_{\nu}}_{\nu=1}^n)$ given $\sigma(\vect{R}_n)$.
\end{assumption}

It can be verified via the stationarity conditions of \cref{Sec:system.id} that \cref{as:IID}
implies $\Expect_{\given\mathcal{F}_n} (\vect{R}_n) = \Expect (\vect{R}_n) = \vect{R}$ and
$\Expect_{\given\mathcal{F}_n} (\vect{r}_n) = \Expect (\vect{r}_n) = \vect{r}$. Thus,
$\Expect_{\given\mathcal{F}_n} (\bm{\mathcal{E}}_n^R) = \vect{0}$ and
$\Expect_{\given\mathcal{F}_n} (\bm{\varepsilon}_n^r) = \vect{0}$.

\begin{lemma}\label{lem:unbiasedness.LS}\mbox{}

  \begin{lemlist}

  \item\label[lemma]{lem:results.Tn} Consider $T$ of \eqref{T.grad}, $T_n$ of \eqref{Tn.grad}, and let
    \cref{as:IID} hold true. Moreover, assume the existence of $\varpi \in \RealPP$ s.t.\
    $\varpi_n \coloneqq \varpi \geq \max\Set{\norm{\vect{R}}, \norm{\vect{R}_n}}$, a.s.\ and
    $\forall n$. Then,
    \begin{subequations}\label{strict.conditions}
      \begin{align}
        \Expect_{\given \mathcal{F}_n}[(T-T_n) \vect{x}_n]
        & = \vect{0} \,, \label{unbiased.Tn} \\
        \Expect_{\given \mathcal{F}_n}[(Q-Q_n) (\vect{x}_n- \vect{x}_{n-1})]
        & = \vect{0} \,, \label{unbiased.Qn} \\ 
        \vect{t}_n
        & = \vect{0} \,, \label{tn.equals.zero} \\
        \Expect_{\given \mathcal{F}_n}[(T_n-T_{n-1}) \vect{x}_{n-1}]
        & = \vect{0} \,. \label{increments.Tn} 
      \end{align}
      Clearly, $\Expect(\vect{t}_n) = \vect{0}$, and thus, the only cluster point
      $\lim\nolimits_{n\to\infty} \Expect(\vect{t}_n) = \vect{0}$ of sequence $(\vect{t}_n)_n$
      belongs trivially to $\range (\Id-Q)$; that is, \cref{as:unbiased.Tn} holds true.

    \item\label[lemma]{lem:hn.CRegLS.as.unbiasedness} Consider $(h, h_n) \coloneqq (l, l_n)$ in
      \eqref{CRegLS}, and let \cref{as:IID} hold true. Then, \cref{as:unbiased.hn} holds true with
      $\varepsilon_n^h(\vect{x}) = 0 = \varepsilon_n^h(\vect{x}_n)$, a.s., $\forall n$,
      $\forall\vect{x}\in \mathcal{X}$.

    \item Consider $\nabla f_n$ in \eqref{grad.fn.LS}, $f \coloneqq l$, and let \cref{as:IID} hold
      true. Then, $\forall n$,
      \begin{align}
        \Expect_{\given \mathcal{F}_n}[(\nabla f - \nabla f_n) \vect{x}_n]
        & = \vect{0} \,, \label{unbiased.gradfn} \\
        \Expect_{\given \mathcal{F}_n}[(\nabla f_n - \nabla f_{n-1}) \vect{x}_{n-1}]
        & = \vect{0} \,. \label{increments.gradfn} 
      \end{align}

    \item\label[lemma]{as:subgrad.CRegLS} Let \cref{as:IID} hold true. Let also
      $g \coloneqq \norm{\cdot}_1$ and either $(h, h_n) \coloneqq (0, 0)$ or
      $(h, h_n) \coloneqq (l, l_n)$. Consider also the sequence
      $(\bm{\xi}_n \in \partial(h_{n-1}+g)(\vect{x}_n))_n$ of subgradients defined in
      \eqref{existence.xi}. Then,
      $\Expect_{\given\mathcal{F}_n} (\bm{\xi}_n) \in\partial (h+g)(\vect{x}_n)$, $\forall n$. More
      generally,
      \begin{align}
      & \exists (\epsilon_n)_n \subset (\mathrm{m}\Sigma)^+\ \text{with}\
        \sum\nolimits_n \Expect (\epsilon_n) < +\infty \notag\\
      & \text{s.t.}\ \Expect_{\given\mathcal{F}_n} (\bm{\xi}_n) \in\partial_{\epsilon_n}
        (h+g)(\vect{x}_n)\,, \forall n\,. \label{unbiased.subgrad.hg}
      \end{align}
    \end{subequations}
  \end{lemlist}
\end{lemma}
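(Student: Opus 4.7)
The plan is to apply a uniform two-step template to all four parts. Step one: write the error of interest as a linear functional of the two primitive oracle errors $\bm{\mathcal{E}}_n^R\coloneqq\vect{R}-\vect{R}_n$ and $\bm{\varepsilon}_n^r\coloneqq\vect{r}-\vect{r}_n$. Step two: evaluate $\Expect_{\given\mathcal{F}_n}[\,\cdot\,]$ by \textbf{(a)} pulling any $\mathcal{F}_n$-measurable iterate $\vect{x}_\nu$ outside the conditional expectation and \textbf{(b)} invoking the exogeneity clause of \cref{as:IID} to kill $\Expect_{\given\mathcal{F}_n}[\bm{\mathcal{E}}_n^R]$ and $\Expect_{\given\mathcal{F}_n}[\bm{\varepsilon}_n^r]$. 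For part (i), substituting \eqref{T.grad} and \eqref{Tn.grad} gives $(T-T_n)\vect{x} = -\tfrac{\mu}{\varpi}\bm{\mathcal{E}}_n^R\vect{x} + \tfrac{\mu}{\varpi}\bm{\varepsilon}_n^r$, from which \eqref{unbiased.Tn} and \eqref{unbiased.Qn} follow by the template. The harder identities \eqref{tn.equals.zero} and \eqref{increments.Tn} both require $\Expect_{\given\mathcal{F}_n}[\bm{\mathcal{E}}_\nu^R]=\vect{0}$ for \emph{every} $\nu\leq n$, which is the main obstacle: for $\nu<n$, $\bm{\mathcal{E}}_\nu^R$ is in general not exogenous w.r.t.\ $\mathcal{F}_n$.

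The workaround exploits the conditional-independence clause $\mathcal{F}_n\perp\sigma(\Set{\vect{a}_\mu}_{\mu=1}^n)\mid\sigma(\vect{R}_n)$ of \cref{as:IID}: iterating expectations yields $\Expect_{\given\mathcal{F}_n}[\bm{\mathcal{E}}_\nu^R] = \Expect_{\given\mathcal{F}_n}\bigl[\Expect_{\given\sigma(\vect{R}_n)}[\bm{\mathcal{E}}_\nu^R]\bigr]$, and exchangeability of $(\vect{a}_\mu)_{\mu=1}^n$ under IID, combined with $\sum_\mu\vect{a}_\mu\vect{a}_\mu^\intercal = n\vect{R}_n$, forces $\Expect_{\given\sigma(\vect{R}_n)}[\vect{a}_\mu\vect{a}_\mu^\intercal]=\vect{R}_n$ for every $\mu\leq n$; hence $\Expect_{\given\sigma(\vect{R}_n)}[\vect{R}_\nu]=\vect{R}_n$ and $\Expect_{\given\sigma(\vect{R}_n)}[\bm{\mathcal{E}}_\nu^R] = \bm{\mathcal{E}}_n^R$, and a final outer exogeneity step returns $\vect{0}$. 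Summing over $\nu$ closes \eqref{tn.equals.zero} and delivers $\Expect(\vect{t}_n)=\vect{0}\in\range(\Id-Q)$, discharging \cref{as:unbiased.Tn}. Part (ii) expands $(h-h_n)(\vect{x})=\tfrac{1}{2}\vect{x}^\intercal\bm{\mathcal{E}}_n^R\vect{x}-(\bm{\varepsilon}_n^r)^\intercal\vect{x}+\tfrac{1}{2}\bigl[\Expect(b_1^2)-\tfrac{1}{n}\sum_\nu b_\nu^2\bigr]$: the quadratic and linear terms vanish by the template (for deterministic $\vect{x}$ and for $\vect{x}_n\in\mathcal{F}_n$ alike), and the constant vanishes because $(b_\nu)_\nu$ inherits the IID/stationarity structure of $(\vect{a}_\nu,\eta_\nu)_\nu$. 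Part (iii) is the template applied to $(\nabla f-\nabla f_n)(\vect{x})=\bm{\mathcal{E}}_n^R\vect{x}-\bm{\varepsilon}_n^r$, with the telescoped identity $\vect{R}_n-\vect{R}_{n-1}=-\bm{\mathcal{E}}_n^R+\bm{\mathcal{E}}_{n-1}^R$ handling \eqref{increments.gradfn} via the $\nu=n-1$ case of the exchangeability collapse.

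For part (iv), I decompose $\bm{\xi}_n = \nabla h_{n-1}(\vect{x}_n) + \bm{\eta}_n$ with $\bm{\eta}_n\in\partial g(\vect{x}_n)$; additivity of the subdifferential is automatic since $h_{n-1}$ is either $\equiv 0$ or smooth. Part (iii) already yields $\Expect_{\given\mathcal{F}_n}[\nabla h_{n-1}(\vect{x}_n)] = \nabla h(\vect{x}_n)$. Since $\partial g(\vect{x}_n)$ is a closed convex $\mathcal{F}_n$-measurable set and $\bm{\eta}_n$ takes values in it, the classical fact that conditional expectations of random vectors living in such a set stay in the set gives $\Expect_{\given\mathcal{F}_n}[\bm{\eta}_n]\in\partial g(\vect{x}_n)$, so $\Expect_{\given\mathcal{F}_n}(\bm{\xi}_n)\in\Set{\nabla h(\vect{x}_n)}+\partial g(\vect{x}_n)=\partial(h+g)(\vect{x}_n)$---strictly stronger than \eqref{unbiased.subgrad.hg}, which therefore holds trivially with $\epsilon_n\equiv 0$. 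All four claims thus reduce to a single non-routine ingredient, the exchangeability identity $\Expect_{\given\sigma(\vect{R}_n)}[\vect{R}_\nu]=\vect{R}_n$ behind \eqref{tn.equals.zero}; once it is in hand, every other assertion follows by a direct application of the exogeneity template.
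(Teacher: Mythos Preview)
Your proposal is correct and follows essentially the same route as the paper: both reduce everything to the exogeneity identities $\Expect_{\given\mathcal{F}_n}(\bm{\mathcal{E}}_n^R)=\vect{0}$, $\Expect_{\given\mathcal{F}_n}(\bm{\varepsilon}_n^r)=\vect{0}$, and both isolate the same non-trivial ingredient, namely $\Expect_{\given\sigma(\vect{R}_n)}(\vect{R}_\nu)=\vect{R}_n$ for $\nu\leq n$ via IID exchangeability combined with the conditional-independence clause of \cref{as:IID}, which then yields $\Expect_{\given\mathcal{F}_n}(\vect{R}_\nu)=\vect{R}$ and closes \eqref{tn.equals.zero} and \eqref{increments.Tn}. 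One small omission: the companion identity $\Expect_{\given\mathcal{F}_n}(\vect{r}_\nu)=\vect{r}$ is not pure exchangeability---the paper routes it through the regression model $b_\nu\vect{a}_\nu=\vect{a}_\nu\vect{a}_\nu^\intercal\bm{\theta}_*+\eta_\nu\vect{a}_\nu$ to get $\Expect_{\given\sigma(\vect{R}_n)}(b_\nu\vect{a}_\nu)=\vect{R}_n\bm{\theta}_*$, which you should make explicit.

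The one genuine methodological difference is in part~(iv). The paper decomposes $\bm{\xi}_n=\vect{R}_{n-1}\vect{x}_n-\vect{r}_{n-1}+\bm{\chi}_n$ and argues from the explicit form of $\partial\norm{\cdot}_1$ that $\bm{\chi}_n\in\mathrm{m}\mathcal{F}_n$, so that conditioning leaves $\bm{\chi}_n$ untouched. Your route---$\bm{\eta}_n\in\partial g(\vect{x}_n)$ a.s.\ with $\vect{x}_n\in\mathrm{m}\mathcal{F}_n$ implies $\Expect_{\given\mathcal{F}_n}(\bm{\eta}_n)\in\partial g(\vect{x}_n)$ by taking $\Expect_{\given\mathcal{F}_n}$ of the subgradient inequality---is more general (it works for any convex $g$, not just $\norm{\cdot}_1$) and sidesteps the question of whether the particular $\bm{\chi}_n$ produced by the prox step is actually $\mathcal{F}_n$-measurable, a point the paper asserts rather than verifies. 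Both approaches land on $\epsilon_n\equiv 0$.
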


\begin{proof}
  See \cref{app:unbiasedness.LS}.
\end{proof}

\noindent Results \eqref{strict.conditions} can be relaxed as follows: \cref{app:main.thm}
[\cf~\eqref{prove.suff.cond.theta}] demonstrates that \eqref{strict.conditions} suffice to establish
\cref{as:domination}.

\begin{assumption}[Dominated $(\vartheta_n)_{n\in\IntegerP}$]\label{as:domination}
  Consider the sequence $(\vartheta_n)_{n\in\IntegerP}$ of RVs defined by the expression which
  starts from \eqref{vartheta.start} and ends at \eqref{BH.thm}. There exists
  $\psi\in (\mathrm{m}\Sigma)^+$ with $\Expect(\psi) < +\infty$ s.t.
  $\sum\nolimits_n \Expect_{\given \mathcal{F}_n} (\vartheta_n)^+ \leq \psi$ a.s., where
  $\Expect_{\given \mathcal{F}_n} (\vartheta_n)^+ \coloneqq \max\Set{0, \Expect_{\given
      \mathcal{F}_n} (\vartheta_n)}$.
\end{assumption}

\begin{assumption}[Bounded variances]\label{as:bounded.variances}\mbox{}
  \begin{asslist}
  \item\label[assumption]{as:bounded.var.gradfn} Given $z\in\mathcal{X}$, there exists
    $C_{\nabla f} \coloneqq C_{\nabla f}(z)\in \RealPP$ s.t.\
    $\Expect [\norm{(\nabla f - \nabla f_n)z}^2] \leq C_{\nabla f}$, $\forall n$.
  \item\label[assumption]{as:bounded.var.pi} There exists $C_{\pi}\in\RealPP$ s.t.\
    $\Expect (\norm{\pi_n-\pi}^2) \leq C_{\pi}$, $\forall n$. \qedhere
  \end{asslist}
\end{assumption}

\noindent Bounded-variance assumptions appear often in SA, \eg, \cite[p.~126, (A2.1)]{Kushner.Yin}.

\begin{assumption}[Bounded estimates yield bounded subgradients]\label{as:bounded.subgrads}
  \mbox{}
  \begin{asslist}
  \item\label[assumption]{as:bounded.tau} For any a.s.\ bounded $(z_n)_n$, there exist a sequence
    $(\tau_n)_n$ and $C_{\partial} \coloneqq C_{\partial}(\omega) \in\RealPP$ s.t.\
    $\tau_n\in \partial (h_n+g)(z_n)$ and
    $\Expect_{\given\mathcal{F}_n} (\norm{\tau_n}) \leq C_{\partial}$, $\forall n$, a.s.
  \item\label[assumption]{as:as.bounded.x.as.bounded.xi} Consider the sequence $(\xi_n)_n$ of
    subgradients defined in \eqref{existence.xi}. If $(x_n)_n$ is bounded a.s., then $(\xi_n)_n$ is
    bounded a.s.
  \item\label[assumption]{as:l2.bounded.x.l2.bounded.xi} If $(\Expect(\norm{x_n}^2))_n$ is bounded,
    then $(\Expect(\norm{\xi_n}^2))_n$ is bounded. \qedhere
  \end{asslist}
\end{assumption}

\begin{lemma}\label{lem:bounded.subgrads} Let \cref{as:ergodic.an.bn} hold
  true. If $(h, h_n) \coloneqq (0, 0)$ and $g$ is defined as a scalar multiple of $\norm{\cdot}_1$,
  then \cref{as:bounded.subgrads} holds true. If $(h, h_n) \coloneqq (l, l_n)$ and
  $(f, f_n) \coloneqq (0, 0)$ in \eqref{CRegLS}, then the following claims can be established.
  \begin{lemlist}
  \item\label[lemma]{lem:bounded.tau.as.bounded.x.as.bounded.xi}
    \cref{as:bounded.tau,as:as.bounded.x.as.bounded.xi} hold true.
  \item If there exist also $\varpi, \varpi'\in \RealPP$, fixed over the probability space, s.t.\
    $\norm{\vect{R}_n}\leq \varpi$ and $\norm{\vect{r}_n}\leq \varpi'$, $\forall n$ and a.s., then
    \cref{as:l2.bounded.x.l2.bounded.xi} holds true. \qedhere
  \end{lemlist}
\end{lemma}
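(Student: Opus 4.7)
The key observation driving the entire lemma is that $\partial g(\cdot) = \partial(\rho\|\cdot\|_1)$ is globally bounded: every subgradient of $\rho\|\cdot\|_1$ has components in $[-\rho,\rho]$, so its norm is at most $\rho\sqrt{D}$ irrespective of the point at which the subdifferential is evaluated. I plan to exploit this fact, combined with the subdifferential-of-a-sum identity $\partial(h_n+g)=\nabla h_n + \partial g$ (valid here because $h_n=l_n$ is differentiable and finite-valued, so the Moreau--Rockafellar qualification is trivially satisfied), to reduce every statement to a control of the smooth piece $\nabla l_n(z)=\mathbf{R}_n z - \mathbf{r}_n$.

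\emph{Case $(h,h_n)\equiv(0,0)$, $g=\rho\|\cdot\|_1$.} Here $\partial(h_n+g)=\partial g$ is bounded by $\rho\sqrt{D}$ everywhere. Thus any selection $\tau_n\in\partial g(z_n)$ satisfies $\|\tau_n\|\le\rho\sqrt{D}$ surely, which immediately yields \cref{as:bounded.tau} with $C_\partial = \rho\sqrt{D}$, while \cref{as:as.bounded.x.as.bounded.xi,as:l2.bounded.x.l2.bounded.xi} follow trivially since $(\xi_n)_n$ is uniformly bounded regardless of the behaviour of $(x_n)_n$.

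\emph{Case $(h,h_n)=(l,l_n)$, $(f,f_n)=(0,0)$.} For any selection $\tau_n\in\partial(h_n+g)(z_n)$, write $\tau_n=(\mathbf{R}_nz_n-\mathbf{r}_n)+v_n$ with $v_n\in\partial g(z_n)$, so
\begin{align*}
\|\tau_n\|\le \|\mathbf{R}_n\|\,\|z_n\| + \|\mathbf{r}_n\| + \rho\sqrt{D}\,.
\end{align*}
\cref{as:ergodic.an.bn} yields $\mathbf{R}_n\limas_n \mathbf{R}$ and $\mathbf{r}_n\limas_n \mathbf{r}$, hence $(\mathbf{R}_n)_n$ and $(\mathbf{r}_n)_n$ are a.s.\ bounded; if in addition $(z_n)_n$ is a.s.\ bounded, then $(\|\tau_n\|)_n$ is a.s.\ bounded by some $C_\partial(\omega)\in\RealPP$, and conditional expectation preserves the bound, giving \cref{as:bounded.tau}. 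Specialising $(z_n)_n$ to the iterate sequence $(x_n)_n$ and $\tau_n$ to the subgradient $\xi_n\in\partial(h_{n-1}+g)(x_n)$ supplied by \eqref{existence.xi} yields \cref{as:as.bounded.x.as.bounded.xi}.

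For part (ii), the uniform bounds $\|\mathbf{R}_n\|\le\varpi$ and $\|\mathbf{r}_n\|\le\varpi'$ (a.s., $\forall n$) let me remove the dependence on $\omega$: from $\xi_n=\mathbf{R}_{n-1}x_n-\mathbf{r}_{n-1}+v_n$ with $\|v_n\|\le\rho\sqrt{D}$, the elementary inequality $(a+b+c)^2\le 3(a^2+b^2+c^2)$ gives
\begin{align*}
\|\xi_n\|^2 \le 3\varpi^2\|x_n\|^2 + 3\varpi'^2 + 3\rho^2 D\,,
\end{align*}
and taking expectations yields $\Expect(\|\xi_n\|^2)\le 3\varpi^2\Expect(\|x_n\|^2) + 3(\varpi'^2+\rho^2 D)$, so the boundedness of $(\Expect(\|x_n\|^2))_n$ transfers to $(\Expect(\|\xi_n\|^2))_n$. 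No real obstacle is anticipated; the only technical care lies in making sure that the subdifferential sum rule applies (handled by the finite-valuedness of $l_n$) and that the a.s.\ boundedness of $(\mathbf{R}_n,\mathbf{r}_n)$ implied by \cref{as:ergodic.an.bn} is enough for \cref{as:bounded.tau}, which asks only for an $\omega$-dependent constant $C_\partial$.
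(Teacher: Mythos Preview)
Your proposal is correct and follows essentially the same route as the paper: bound $\partial(\rho\norm{\cdot}_1)$ uniformly by $\rho\sqrt{D}$, decompose any element of $\partial(h_n+g)$ as $(\vect{R}_n z_n-\vect{r}_n)$ plus an $\ell_1$-subgradient, and invoke the a.s.\ boundedness of $(\vect{R}_n,\vect{r}_n)$ from \cref{as:ergodic.an.bn} for part~(i) and the uniform deterministic bounds $\varpi,\varpi'$ for part~(ii). The only cosmetic differences are the constants in the quadratic inequality (you use $(a+b+c)^2\le 3(a^2+b^2+c^2)$, the paper nests $(a+b)^2\le 2(a^2+b^2)$ twice) and your explicit mention of the Moreau--Rockafellar sum rule, which the paper uses silently.
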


\begin{proof}
  See \cref{app:bounded.subgrads}.
\end{proof}

\section{Numerical Tests}\label{Sec:tests}

The proposed framework is validated within the setting of \cref{Sec:system.id} where
S-FM-HSDM(CRegLS), HRLSa and HRLSb are compared with the following OL and SA schemes:
\begin{enumerate}
\item The classical RLS~\cite[\S30.2]{SayedBook};
\item the $\ell_1$-norm regularized ($\ell_1$-)RLS~\cite{l0RLS}, and its extension, the
  $\ell_0$-norm ($\ell_0$-)RLS~\cite{l0RLS}, where a \textit{non-convex}\/
  regularizing function is used instead of $\norm{\cdot}_1$;
\item the LASSO-motivated online selective coordinate descent (OSCD) and online cyclic coordinate
  descent (OCCD) methods~\cite{RLS.meets.l1}, where, according to \cite[Sec.~V]{RLS.meets.l1}, the
  power of the additive noise in the linear-regression model is assumed to be known and incorporated
  in the regularizing coefficient $\rho_n$ in \eqref{CRegLS} s.t.\ $\rho_n\to_n 0$;
\item the proximal stochastic variance-reduced gradient (Prox-SVRG) method~\cite{ProxSVRG}, applied
  to the setting of the ever-growing data regime
  $f \coloneqq (1/n) \sum_{\nu=1}^n \mathfrak{f}_{\nu}$ in \eqref{CRegLS}, with
  $\mathfrak{f}_{\nu}(\vect{x}) \coloneqq (1/2)(\vect{a}_{\nu}^{\intercal} \vect{x} - b_{\nu})^2$;
\item SVRG-ADMM~\cite{FastLightSADMM}, where $f$ is identical to that of the Prox-SVRG case;
\item the accelerated stochastic approximation (ACSA) with the step sizes
  of~\cite[(33)]{Lan.ACSA.12};
\item the adaptive sparse variational Bayes multi-parameter Laplace prior
  (ASVB-MPL) method~\cite{Themelis.ASVB.14}; and
\item the stochastic dual-averaging (SDA) scheme with linear-convergence-rate
  guarantees~\cite{Flammarion.Bach.17a}.
\end{enumerate}
It is worth stressing here that all of \cite{l0RLS, RLS.meets.l1, ProxSVRG, FastLightSADMM,
  Lan.ACSA.12, Flammarion.Bach.17a} are built around the mainstream \eqref{CRegLS}. As explained in
\cref{Sec:problem,Sec:system.id}, any attempt to pass $\mathcal{A}$ of \eqref{HLS} to the objective
function via the indicator function $\iota_{\mathcal{A}}$ entails the use of the projection mapping
$P_{\mathcal{A}}$ and, thus, the eigen-decompositions of $(\vect{R}_n)_n$ via the
(Moore-Penrose-)pseudoinverse operation. Recall that this is not the case for the computationally
"light" HRLSa.

In all tests, the dimension of the Euclidean space $\mathcal{X} = \Real^D$ is set to be
$D \coloneqq 100$. The sparse system $\bm{\theta}_*$ is created by placing $\pm 1$s at randomly
selected entries of the $D\times 1$ all-zero vector. The ``sparsity level'' of $\bm{\theta}_*$ is
defined as the percentage of the number of non-zero entries of $\bm{\theta}_*$ over $D$. All of the
methods were tested in several scenarios detailed below. Since focus is placed on the
system-identification problem of \cref{Sec:system.id}, the criterion of performance is the
normalized-root-mean-square-deviation loss
$\norm{\vect{x}_n - \bm{\theta}_*}/ \norm{\bm{\theta}_*}$. Each curve in the figures is the uniform
average of $500$ independently performed tests.

To ensure fair comparisons, the parameters of every method were carefully tuned to reach optimal
performance per given scenario. Due to space limitations, lists of all parameters for each test are
omitted. However, few things can be stated here about the parameters $\alpha$ and $\lambda$ of
\cref{algo:SFMHSDM}. With $\alpha\in [0.5,1)$ in \cref{algo:SFMHSDM}, the general trend is that the
fastest convergence speed of S-FM-HSDM is achieved for $\alpha = 0.5$. Moreover, with
$\lambda \in (0, 2(1-\alpha)/L_{\nabla f})$, the fastest convergence speed was observed for values
of $\lambda$ close to $2(1-\alpha)/L_{\nabla f}$. In the case where $L_{\nabla f}$ is unknown, \eg,
the case of $f \coloneqq 0$, the values of $L_{\nabla f}$ used in the following tests were drawn
from the interval $[10^{-3}, 10^{-1}]$.

\subsection{$(\vect{a}_n)_n$ is an IID process}\label{Sec:an.IID}

With regards to the linear-regression model of \cref{Sec:system.id}, process $(\vect{a}_n)_n$ is
considered to be IID Gaussian. Independency is also assumed among the entries
$([\vect{a}_n]_d)_{d=1}^D$ of each vector $\vect{a}_n$, $\forall n$. Given a value for the
signal-to-noise ratio (SNR) in dB, the ``power'' of the additive noise
$\Expect(\eta_n^2) = 10^{-\text{SNR(dB)/10}} \norm{\bm{\theta}_*}^2 \Expect([\vect{a}_n]_d^2)$. The
SNR values $\Set{10, 20}\text{dB}$ were examined and results are illustrated in
\cref{fig:Sce1-2,fig:Sce3-4}. Remarkably, the \eqref{HLS} formulation seems to be more appropriate
than \eqref{CRegLS} for the sparse system-identification problem: The best performance among all
methods is achieved by the proposed HRLSa, HRLSb and the non-convex $\ell_0$-RLS.

\subsubsection{Time-varying system} To test the ability of the methods to adapt to dynamic system
changes, a typical AF test is considered here~\cite{SayedBook}: The sparsity level of the estimandum
$\bm{\theta}_*$ changes abruptly at the time instance $2,500$ from $1\%$ to $10\%$, where the
non-zero entries of $\bm{\theta}_*$ are re-allocated randomly.

As in the classical exponentially-weighted RLS~\cite[\S30.6]{SayedBook}, \eqref{CRegLS} is modified
to
\begin{align*}
  \min_{\vect{x}\in\Real^D} \Expect \Bigl[\tfrac{1}{2\Gamma_{\text{f},n}}
  \sum\nolimits_{\nu=1}^n \gamma_{\text{f}}^{n-\nu} \left(\vect{a}_{\nu}^{\intercal} \vect{x} -
  b_{\nu}\right)^2 \Bigr] + \rho\norm{\vect{x}}_1 \,,
\end{align*}
where $\Gamma_{\text{f},n} \coloneqq \sum_{\nu=1}^n \gamma_{\text{f}}^{n-\nu}$ and
$\gamma_{\text{f}}\in (0,1]$ is a ``forgetting coefficient'' that enforces a ``short-memory''
effect, via the exponential rule $\gamma_{\text{f}}^{n-\nu}$, to account for the non-stationaries of
the input-output data statistics. Results are illustrated in \cref{fig:Sce11}. HRLSa, HRLSb and the
Bayesian ASVB-MPL seem to be both agile and accurate in their estimation task.

\subsubsection{No additive noise} Here, $\eta_n = 0$, or, $\text{SNR} = +\infty$, in the
linear-regression model of \cref{Sec:system.id}. Results are illustrated in \cref{fig:Sce9-10}. The
best performance is achieved by S-FM-HSDM(CRegLS), HRLSa, HRLSb, SDA, Prox-SVRG and SVRG-ADMM.

\subsection{$(\vect{a}_n)_n$ is an auto-regressive (AR) process}\label{Sec:an.AR}

A first-order auto-regressive [AR($1$)] process $(\vect{a}_n)_n$ is considered:
$\forall n\in\IntegerP$, $\vect{a}_n \coloneqq \delta_{\text{AR}}\vect{a}_{n-1} + \bm{\upsilon}_n$,
with $\delta_{\text{AR}}\in \Real$ and $\lvert \delta_{\text{AR}}\rvert < 1$,
$(\bm{\upsilon}_n)_{n=-1}^{+\infty}$ is a zero-mean Gaussian IID process, where independency is also
assumed among the entries $([\bm{\upsilon_n}]_d)_{d=1}^D$ of each vector $\bm{\upsilon}_n$, and
$\vect{a}_{-1} \coloneqq \bm{\upsilon}_{-1}$. Recall here that
$\Expect([\vect{a}_n]_d^2) = \Expect([\bm{\upsilon}_n]_d^2)/
(1-\delta_{\text{AR}}^2)$~\cite[(2.12.7)]{Porat.book}. In all tests, the ratio
$\Expect([\vect{a}_n]_d^2)/ \Expect([\bm{\upsilon}_n]_d^2)$ is set to $5$dB. Results are illustrated
in \cref{fig:Sce5-6,fig:Sce7-8}. The best performance is achieved by HRLSa, HRLSb and $\ell_0$-RLS.

\begin{figure}	
  \centering
  \begin{subfigure}[t]{.45\linewidth}
    \centering
    \includegraphics[width=\linewidth]{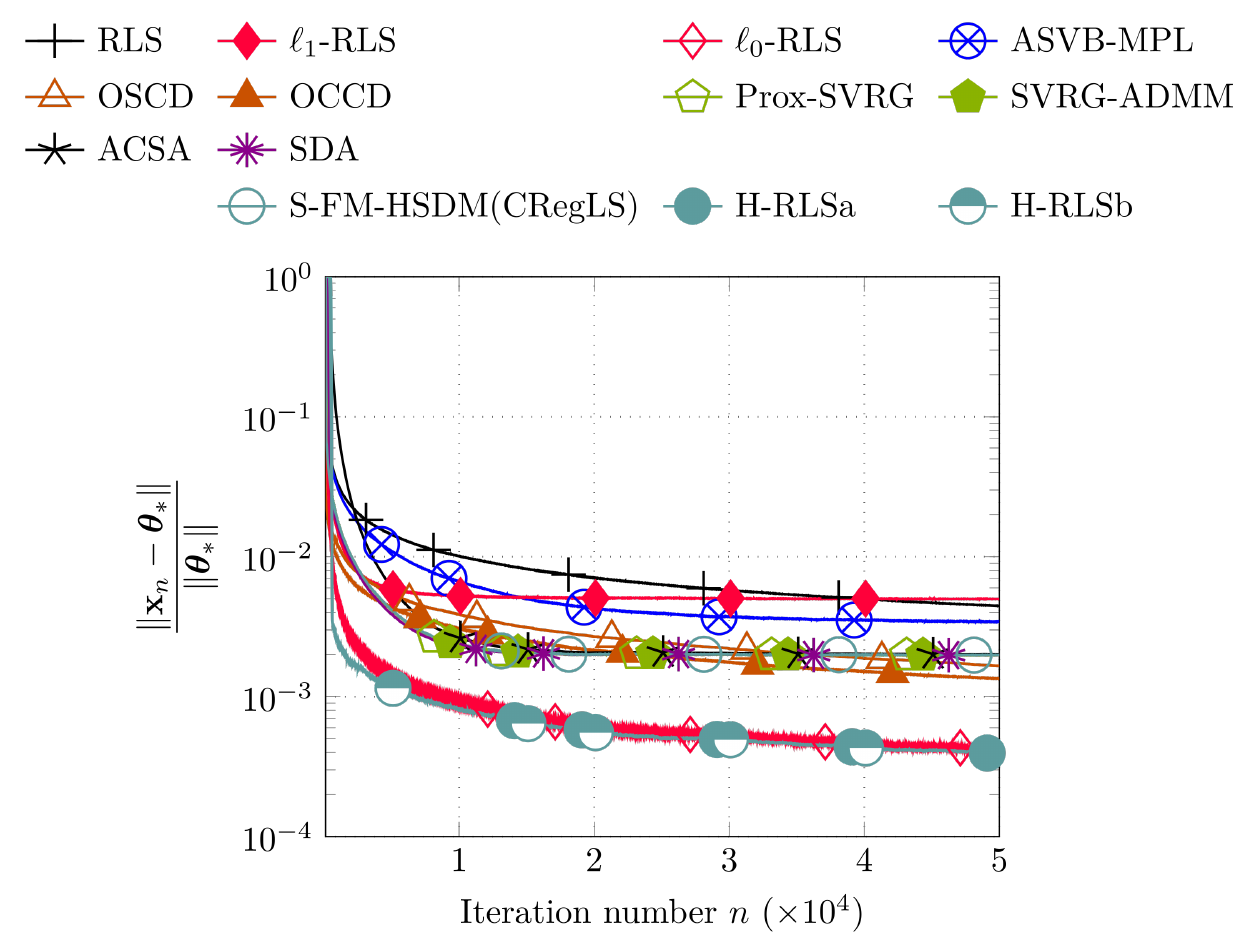}
    \subcaption{Sparsity level: $1\%$.}\label{fig:Sce1}
  \end{subfigure}
  \begin{subfigure}[t]{.45\linewidth}
    \centering
    \includegraphics[width=\linewidth]{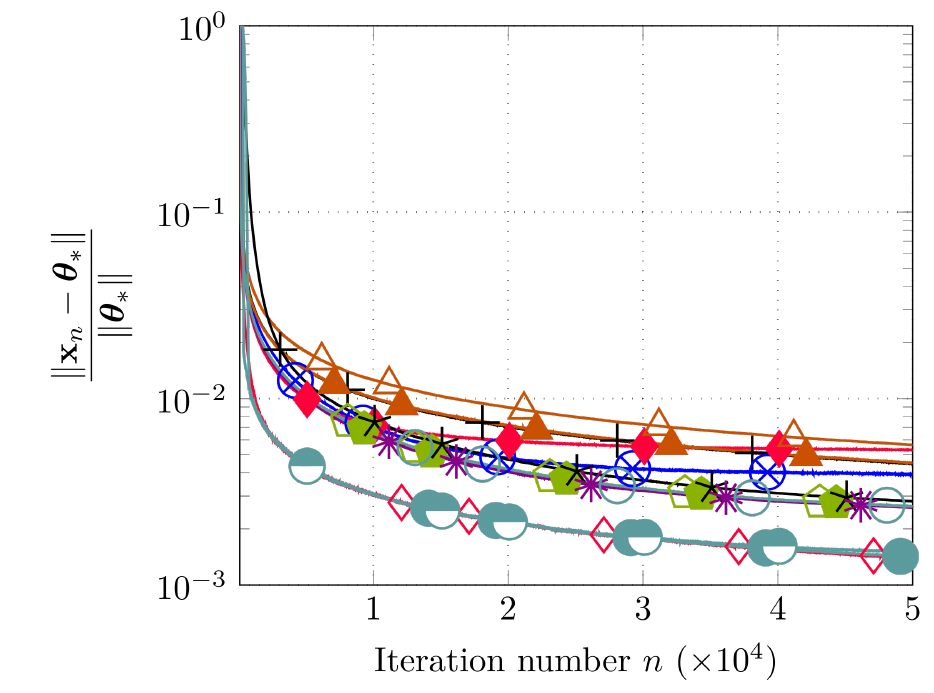}
    \subcaption{Sparsity level: $10\%$.}\label{fig:Sce2}
  \end{subfigure}
  \caption{IID $(\vect{a}_n)_n$; $\text{SNR} = 20\text{dB}$.}\label{fig:Sce1-2}
\end{figure}

\begin{figure}	
  \centering
  \begin{subfigure}[t]{.45\linewidth}
    \centering
    \includegraphics[width=\linewidth]{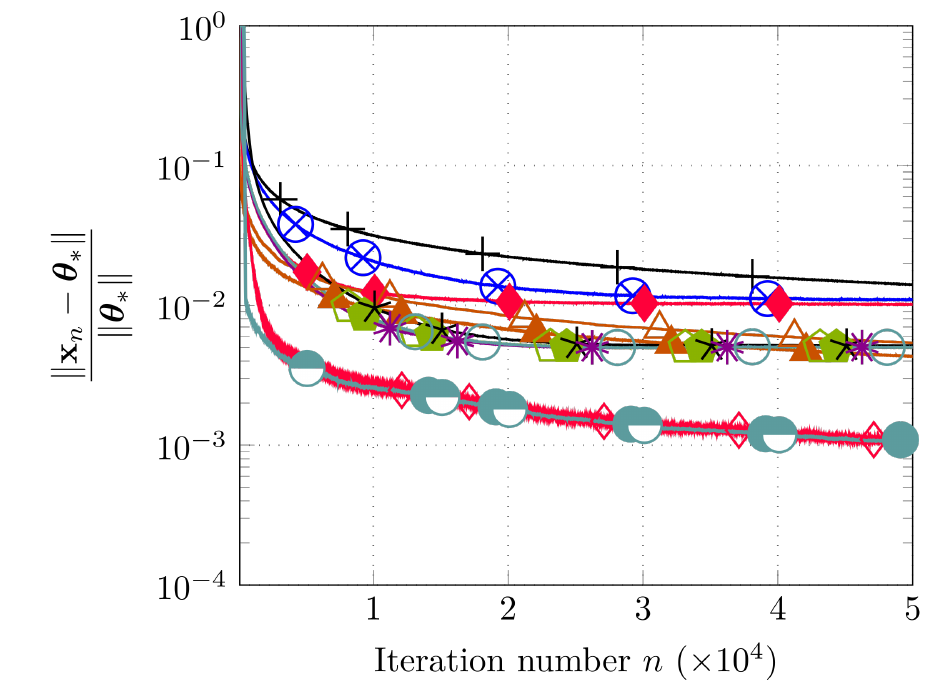}
    \subcaption{Sparsity level: $1\%$.}\label{fig:Sce3}
  \end{subfigure}
  \begin{subfigure}[t]{.45\linewidth}
    \centering
    \includegraphics[width=\linewidth]{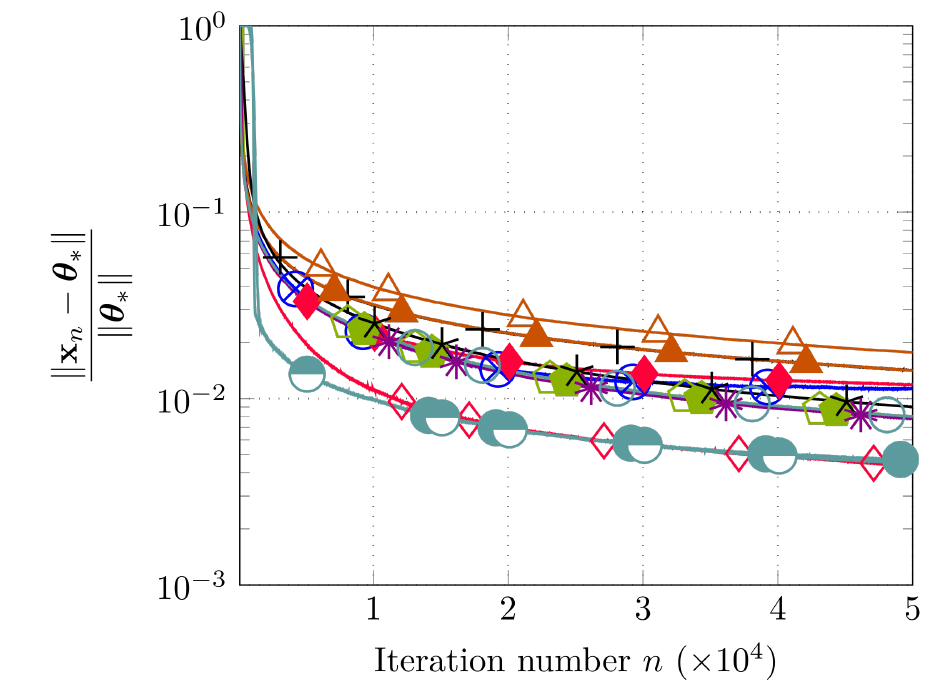}
    \subcaption{Sparsity level: $10\%$.}\label{fig:Sce4}
  \end{subfigure}
  \caption{IID $(\vect{a}_n)_n$; $\text{SNR} = 10\text{dB}$.}\label{fig:Sce3-4}
\end{figure}

\begin{figure}[!t]
  \centering
  \includegraphics[width=.5\linewidth]{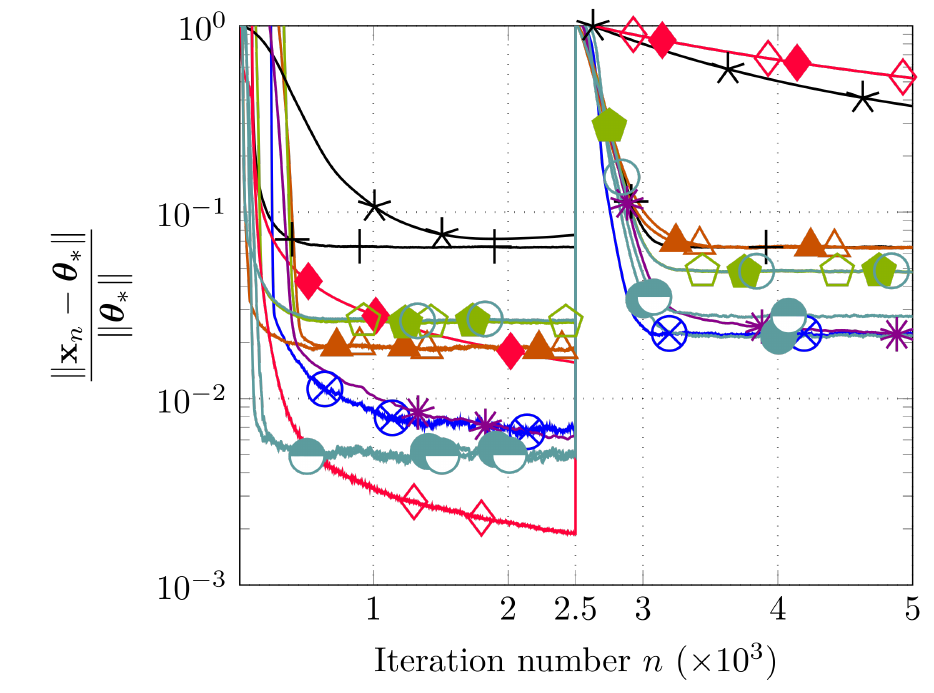}
  \caption[]{IID $(\vect{a}_n)_n$; $\text{SNR} = 20\text{dB}$; the sparsity level of $\bm{\theta}_*$
    changes at time $n = 2,500$ from $1\%$ to $10\%$.}\label{fig:Sce11}
\end{figure}

\begin{figure}	
  \centering
  \begin{subfigure}[t]{.45\linewidth}
    \centering
    \includegraphics[width=\linewidth]{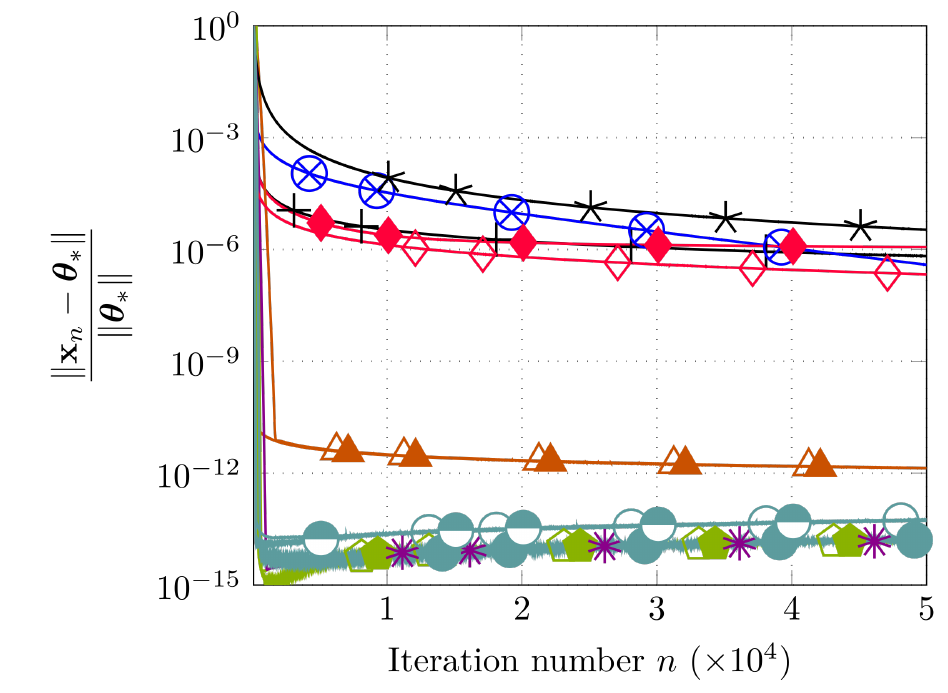}
    \subcaption{IID $(\vect{a}_n)_n$.}\label{fig:Sce9}
  \end{subfigure}
  \begin{subfigure}[t]{.45\linewidth}
    \centering
    \includegraphics[width=\linewidth]{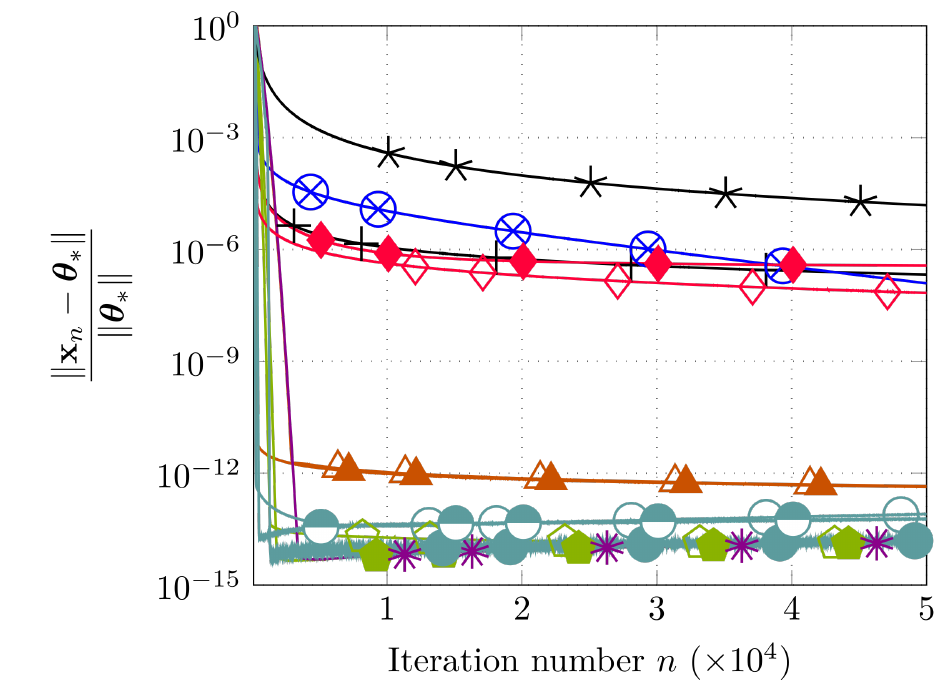}
    \subcaption{AR $(\vect{a}_n)_n$.}\label{fig:Sce10}
  \end{subfigure}
  \caption{$\text{SNR} = +\infty$ (no additive noise); sparsity level: $10\%$; $\rho =
    10^{-20}$. }\label{fig:Sce9-10}
\end{figure} 

\begin{figure}	
  \centering
  \begin{subfigure}[t]{.45\linewidth}
    \centering
    \includegraphics[width=\linewidth]{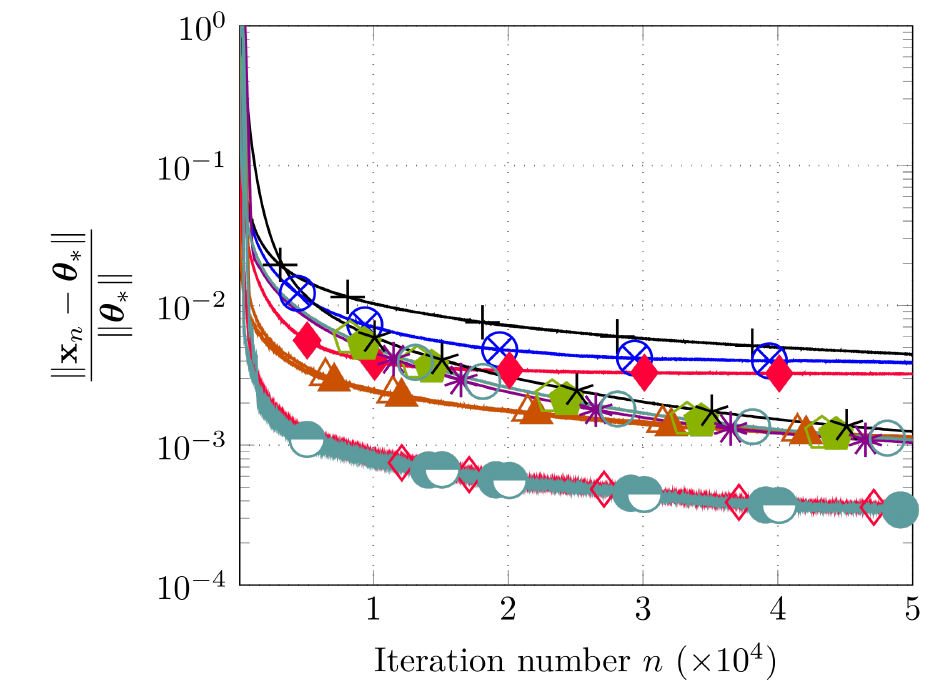}
    \subcaption{Sparsity level: $1\%$.}\label{fig:Sce5}
  \end{subfigure}
  \begin{subfigure}[t]{.45\linewidth}
    \centering
    \includegraphics[width=\linewidth]{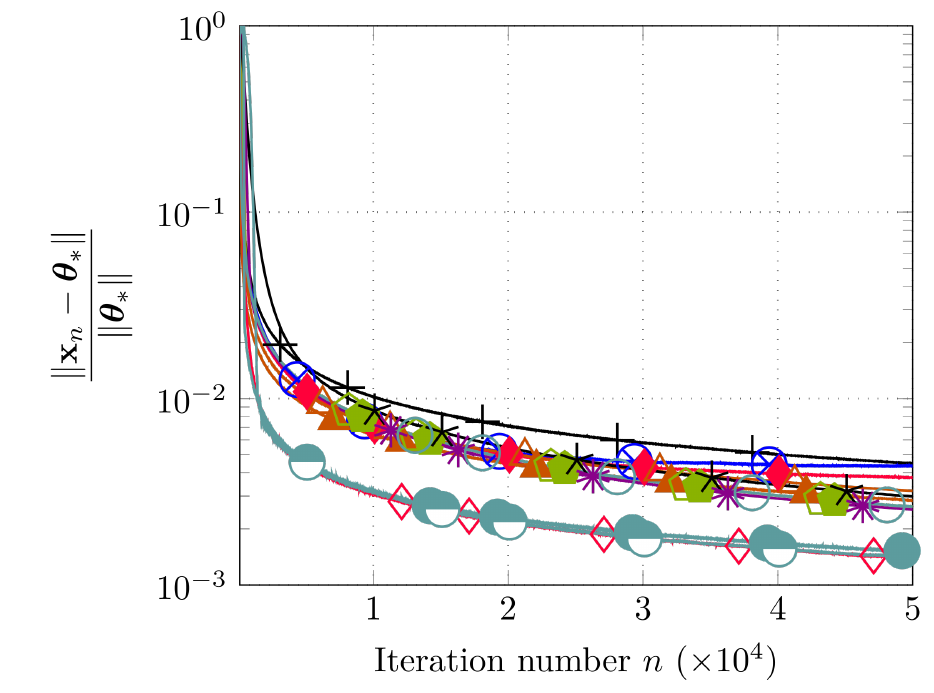}
    \subcaption{Sparsity level: $10\%$.}\label{fig:Sce6}
  \end{subfigure}
  \caption{AR $(\vect{a}_n)_n$; $\text{SNR} = 20\text{dB}$.}\label{fig:Sce5-6}
\end{figure}

\begin{figure}	
  \centering
  \begin{subfigure}[t]{.45\linewidth}
    \centering
    \includegraphics[width=\linewidth]{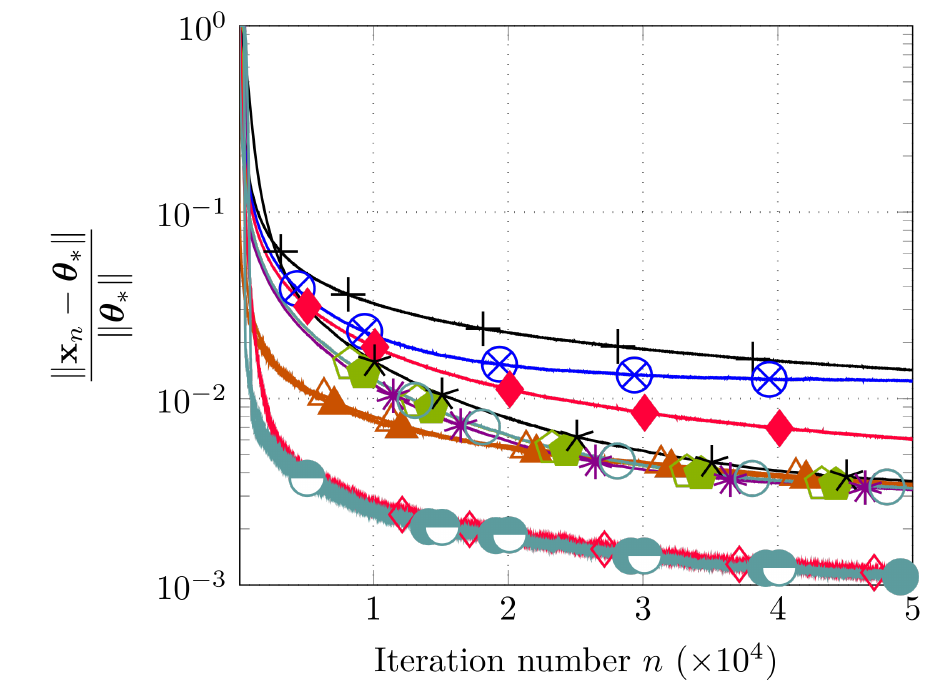}
    \subcaption{Sparsity level: $1\%$.}\label{fig:Sce7}
  \end{subfigure}
  \begin{subfigure}[t]{.45\linewidth}
    \centering
    \includegraphics[width=\linewidth]{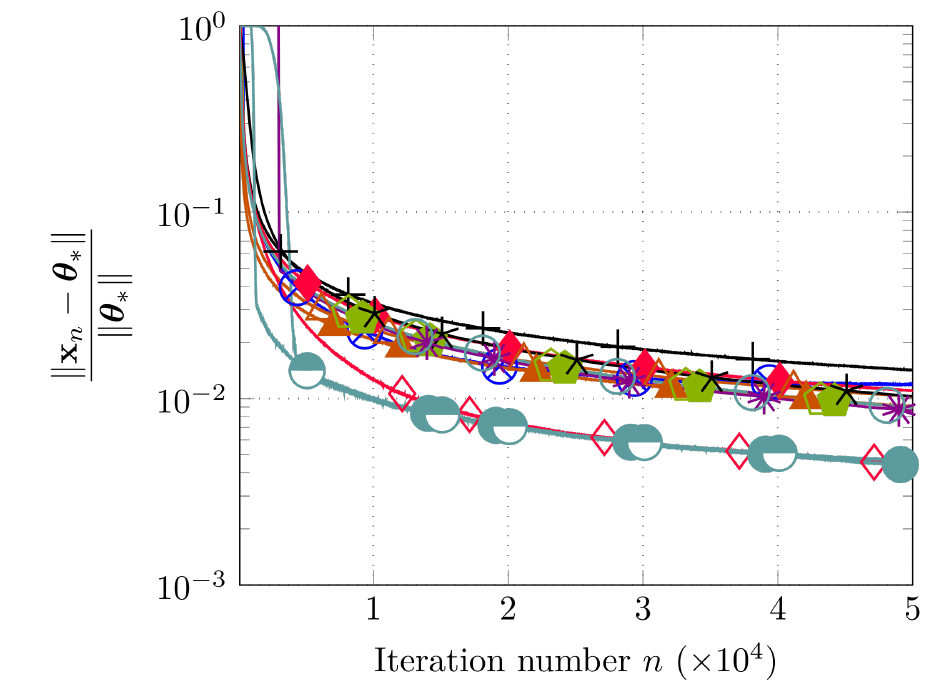}
    \subcaption{Sparsity level: $10\%$.}\label{fig:Sce8}
  \end{subfigure}
  \caption{AR $(\vect{a}_n)_n$; $\text{SNR} = 10\text{dB}$.}\label{fig:Sce7-8}
\end{figure}


\section{Conclusions}\label{Sec:conclusions}

This paper presented a novel stochastic-approximation tool, namely the stochastic Fej\'{e}r-monotone
hybrid steepest descent method (S-FM-HSDM), to solve convex and affinely constrained composite
minimization tasks. Noise contaminates the information about the task, affecting not only the loss
terms but also the affine constraints. S-FM-HSDM provides a novel way of dealing with stochastic
affine constraints via fixed-point sets of appropriate mappings, while retaining several desirable
properties such as splitting of variables and constant step size. A performance analysis is also
provided to identify the conditions under which the sequence of random variables, generated by
S-FM-HSDM, converges a.s.\ to solutions of the latent noiseless minimization task. Several
offsprings of S-FM-HSDM were presented in the context of a well-studied convexly regularized
least-squares task. The versatility of S-FM-HSDM toward affine constraints opens the door to
computationally efficient novel designs, called hierarchical recursive least squares, which,
according to extensive numerical tests on synthetic data, appear to score the lowest estimation
error across a variety of scenarios versus several state-of-the-art adaptive-filtering,
stochastic-approximation and online-learning schemes. Due to space limitations, rates of
convergence, other theoretical contributions and additional applications of S-FM-HSDM will be
presented elsewhere.

\appendices

\section{Mathematical Preliminaries}\label[appendix]{app:preliminaries}

Symbol $\mathcal{X}$ denotes a finite-dimensional Hilbert space, with inner product
$\innerp{\cdot}{\cdot}$ and induced norm $\norm{} \coloneqq \innerp{\cdot}{\cdot}^{1/2}$. Given a
linear operator $U: \mathcal{X}\to \mathcal{X}$, $\range U$ and $\ker U$ denote the range and kernel
spaces of $U$, respectively. Whenever $\mathcal{X} = \Real^D$, the inner product of $\mathcal{X}$ is
the standard dot-vector one:
$\innerp{\vect{x}}{\vect{x}'} \coloneqq \vect{x}^{\intercal} \vect{x}'$,
$\forall (\vect{x}, \vect{x}')\in \mathcal{X}^2$.

Let $(\Omega, \Sigma, \Prob)$ be a probability space, with $\Expect(\cdot)$ denoting
expectation~\cite{Williams.book.91}. Given a sub-$\sigma$-algebra $\mathcal{G}$ of $\Sigma$, the
class of (non-negative) $\mathcal{G}$-measurable functions is denoted by
($(\mathrm{m} \mathcal{G})^+$) $\mathrm{m}\mathcal{G}$~\cite{Williams.book.91}. Given an orthonormal
basis $\Set{b_i}_{i=1}^{\dim\mathcal{X}}$ of $\mathcal{X}$, $x:\Omega\to\mathcal{X}$ is called a
random variable (RV) if there exist RVs $\Set{\gamma^i: \Omega\to\Real}_{i=1}^{\dim\mathcal{X}}$
s.t.\ $x = \sum_{i=1}^{\dim\mathcal{X}} \gamma^i b_i$. To avoid congestion in notations, a lowercase
symbol $x$ denotes both a RV, \ie, $x:\Omega\to \mathcal{X}$ with $x\in\mathrm{m}\Sigma$, and its
realization $x(\omega)$, $\omega\in\Omega$. The abbreviation a.s.\ stands for either "almost
surely," or, "almost sure" with respect to (w.r.t.) $\Omega$, depending on the syntax of the
sentence. A.s.\ convergence of the RV-sequence $(x_n)_n$ to $\bar{x}$ is denoted by
$x_n\limas_n \bar{x}$. For an RV $\gamma: \Omega\to \Real$, let
$\Expect_{\given\mathcal{G}}(\gamma)$ denote the conditional expectation of $\gamma$, conditioned on
$\mathcal{G}$. The conditional expectation $\Expect_{\given\mathcal{G}}(x)$ is defined by
$\Expect_{\given\mathcal{G}}(x) \coloneqq \sum_i \Expect_{\given\mathcal{G}}(\gamma^i)b_i$. Notice
that $\Expect_{\given\mathcal{G}}(x)\in \mathrm{m} \mathcal{G}$~\cite{Williams.book.91}. Moreover,
$\sigma(x)$ denotes the sub-$\sigma$-algebra of $\Sigma$ generated by
$x$~\cite[\S3.8]{Williams.book.91}. For a "random" linear mapping $Q: \mathcal{X}\to \mathcal{X}$
s.t.\ $Qx\in\mathrm{m}\Sigma$, $\forall x\in\mathcal{X}$, let $\Expect_{\given \mathcal{G}}(Q)$
denote the linear mapping
$\Expect_{\given \mathcal{G}}(Q): \mathcal{X} \to\mathcal{X}: x \mapsto \Expect_{\given
  \mathcal{G}}(Q)x \coloneqq \sum_i \gamma^i \Expect_{\given \mathcal{G}}(Qb_i)$. Further, for each
$n$, define the filtration $\mathcal{F}_n \coloneqq \sigma(\Set{x_{\nu}}_{\nu=0}^n)$, \ie, the
sub-$\sigma$-algebra generated by the RVs $\Set{x_{\nu}}_{\nu=0}^n$~\cite[\S10.1]{Williams.book.91}.

Given $\varphi\in \Gamma_0(\mathcal{X})$ [\cf~\cref{as:convex.losses}] and $\epsilon\in\RealPP$, the
$\epsilon$-subdifferential $\partial_{\epsilon} \varphi$ is the set-valued mapping which maps to any
$z\in\mathcal{X}$ all $\epsilon$-subgradients of $\varphi$ at $z$:
$\partial_{\epsilon} \varphi(z) \coloneqq \Set{\xi\in \mathcal{X} \given \varphi(z) +
  \innerp{\xi}{x-z} - \epsilon \leq \varphi(x), \forall x\in\mathcal{X}}$. The graph of
$\partial_{\epsilon}\varphi$ is defined as
$\graph\partial_{\epsilon}\varphi \coloneqq \Set{(z,\xi)\in \mathcal{X}\times \mathcal{X}\given
  \xi\in \partial_{\epsilon}\varphi(z)}$. Symbol $\partial\varphi$ stands for the subdifferential
mapping, defined as
$\partial\varphi(z) \coloneqq \cap_{\epsilon\in\RealPP} \partial_{\epsilon} \varphi(z)$. Moreover,
given $\lambda\in\RealPP$, the proximal mapping $\prox_{\lambda\varphi}: \mathcal{X} \to\mathcal{X}$
is defined as~\cite{HB.PLC.book}
\begin{align}
  z = \prox_{\lambda\varphi}(x)
  & \Leftrightarrow z = \arg\min_{a\in\mathcal{X}} \tfrac{1}{2}\norm{a - x}^2 +
    \lambda\varphi(a) \notag\\
  & \Leftrightarrow \exists \xi\in \partial\varphi(z)\ \text{s.t.}\ z + \lambda \xi =
    x\,. \label{def.proximal}
\end{align}
In the case where $\varphi$ is the indicator function $\iota_{\mathcal{C}}$ for a closed convex set
$\mathcal{C}$, \ie, $\iota_{\mathcal{C}}(x) \coloneqq 0$, if $x\in \mathcal{C}$, and
$\iota_{\mathcal{C}}(x) \coloneqq +\infty$, if $x\notin \mathcal{C}$~\cite{HB.PLC.book}, then
$\prox_{\lambda\varphi}$, for any $\lambda\in\RealPP$, is nothing but the (metric) projection
mapping $P_{\mathcal{C}}$ onto $\mathcal{C}$. The following holds true for any member $T$ of the
family $\mathfrak{T}_{\mathcal{A}}$ in \eqref{T.family}.

\begin{fact}[\protect{\cite[Prop.~2.12]{FM-HSDM.Optim.18}}]\label{fact:U.I-Q}
  The affine constraint $\mathcal{A} = \Fix T = \ker(\Id-Q) + a = \ker U + a$, where
  $a\in \mathcal{A}$ and $U$ stands for the square root of the positive $\Id-Q$, \ie, the (unique)
  positive mapping s.t.\ $U^2 = \Id-Q$~\cite[Thm.~9.4-2]{Kreyszig}. Moreover,
  $\norm{\Id-Q} \leq 1$~\cite[(7)]{FM-HSDM.Optim.18}, and hence, $\norm{U} \leq 1$.
\end{fact}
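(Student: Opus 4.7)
The plan is to peel off the layers in the fact's statement one by one, using only the structural information built into $\mathfrak{T}_{\mathcal{A}}$ together with standard facts about positive operators on a finite-dimensional Hilbert space (in particular, the existence of a unique positive square root cited as Kreyszig, Thm.~9.4-2).

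First, I would translate the fixed-point condition into an affine equation. By membership $T\in \mathfrak{T}_{\mathcal{A}}$, $\mathcal{A} = \Fix T$ and $Tx = Qx + \pi$; hence $x\in \mathcal{A}$ iff $(\Id - Q)x = \pi$. Since $\mathcal{A}$ is assumed nonempty, pick any $a\in\mathcal{A}$; then $\Id - Q$ applied to $x-a$ vanishes iff $x\in\mathcal{A}$, yielding $\mathcal{A} = \ker(\Id-Q) + a$.

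Next, I would justify the square-root construction. The operator $\Id - Q$ inherits linearity, boundedness, and self-adjointness from $Q$, and for every $x\in\mathcal{X}$,
\begin{equation*}
  \innerp{x}{(\Id-Q)x} = \norm{x}^2 - \innerp{x}{Qx} \geq \norm{x}^2 - \norm{Q}\norm{x}^2 \geq 0,
\end{equation*}
using $\norm{Q}\leq 1$. Thus $\Id - Q$ is positive, so Kreyszig's Thm.~9.4-2 delivers a unique positive $U$ with $U^2 = \Id - Q$. I would then match kernels: $Ux = 0$ trivially implies $U^2x = 0$, and conversely $U^2x = 0$ together with self-adjointness give $\norm{Ux}^2 = \innerp{x}{U^2x} = 0$, so $Ux = 0$. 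Hence $\ker U = \ker(\Id - Q)$, completing the identification $\mathcal{A} = \ker U + a$.

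Finally, I would finish the norm bounds. Because $Q$ is positive with $\norm{Q}\leq 1$ and self-adjoint on a finite-dimensional space, its spectrum lies in $[0,1]$; consequently the spectrum of $\Id - Q$ lies in $[0,1]$ and $\norm{\Id - Q}\leq 1$. Since $U$ is positive (hence self-adjoint), the $C^{\ast}$-identity yields $\norm{U}^2 = \norm{U^{\ast}U} = \norm{U^2} = \norm{\Id - Q} \leq 1$, so $\norm{U}\leq 1$. The only delicate step is the invocation of the unique positive square root, but in the finite-dimensional setting this is an immediate consequence of the spectral theorem for self-adjoint operators, so the bulk of the argument is purely organizational, chaining together the fixed-point characterization, the positivity of $\Id - Q$, and standard spectral-norm bookkeeping.
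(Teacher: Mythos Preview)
Your argument is correct and self-contained. Note, however, that the paper does not actually prove this statement: it is recorded as a \emph{Fact} and simply cites \cite[Prop.~2.12]{FM-HSDM.Optim.18} for the fixed-point/kernel identification, \cite[Thm.~9.4-2]{Kreyszig} for the square root, and \cite[(7)]{FM-HSDM.Optim.18} for the bound $\norm{\Id-Q}\leq 1$. Your write-up therefore supplies what the paper outsources. The only minor remark is that when you bound $\innerp{x}{Qx}\leq \norm{Q}\norm{x}^2$, you are implicitly using the paper's nonstandard definition $\norm{Q}\coloneqq \sup_{\norm{x}\leq 1}\innerp{x}{Qx}$ (stated just after \eqref{T.family}), which for positive self-adjoint $Q$ coincides with the operator norm; it may be worth making that identification explicit so the spectral reasoning in your last paragraph is unambiguous.
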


\begin{fact}[\protect{\cite[Prop.~2.15]{FM-HSDM.Optim.18}}]\label{fact:Ypsilon.star}
  For any $\lambda\in\RealPP$, define
  \begin{align*}
    \mathcal{A}_* 
    & \coloneqq \Set{x\in\mathcal{A} \given \left[\nabla f(x) + \partial (h+g)(x) \right] \cap
      \range U \neq \emptyset}\,,\\  
    \Upsilon_*^{(\lambda)}
    & \coloneqq \Set*{(x,v) \in\mathcal{A} \times \mathcal{X} \given \tfrac{-Uv}{\lambda} \in \nabla
      f(x) + \partial (h+g)(x)}\,.
  \end{align*}
  Then, $x_*$ solves \eqref{the.problem} $\Leftrightarrow x_*\in \mathcal{A}_*$ $\Leftrightarrow
  \exists v_*\in\mathcal{X}$ s.t.\ $(x_*,v_*)\in \Upsilon_*^{(\lambda)}$. 
\end{fact}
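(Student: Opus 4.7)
The plan is to reduce the characterization of solutions of \eqref{the.problem} to a first-order optimality (Fermat) condition and then translate the resulting normal-cone membership into membership in $\range U$ by invoking \cref{fact:U.I-Q}. First I would note that $x_*$ solves \eqref{the.problem} if and only if $x_*\in\mathcal{A}$ and $0\in \partial(f+h+g+\iota_{\mathcal{A}})(x_*)$; since $f$ is smooth and $\mathcal{A}$ is affine (hence $\relint\iota_{\mathcal{A}} = \mathcal{A}$), standard subdifferential calculus in finite dimensions yields the sum rule $\partial(f+h+g+\iota_{\mathcal{A}})(x_*) = \nabla f(x_*) + \partial(h+g)(x_*) + N_{\mathcal{A}}(x_*)$, where $N_{\mathcal{A}}(x_*) = \partial \iota_{\mathcal{A}}(x_*)$ is the normal cone at $x_*$.

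Next I would identify the normal cone. By \cref{fact:U.I-Q}, $\mathcal{A} = \ker U + a$ for the positive (hence self-adjoint) square root $U$ of $\Id-Q$. The normal cone of an affine subspace coincides at every point with the orthogonal complement of its direction subspace, so $N_{\mathcal{A}}(x_*) = (\ker U)^{\perp}$. In the finite-dimensional setting, $(\ker U)^{\perp} = \range U^{*} = \range U$ because $U$ is self-adjoint. Consequently, the Fermat condition reads: there exists $w\in\range U$ with $-w \in \nabla f(x_*) + \partial(h+g)(x_*)$, and since $\range U$ is a linear subspace it is closed under negation, which is precisely the defining condition of $\mathcal{A}_*$. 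This establishes the first equivalence $x_*\ \text{solves \eqref{the.problem}} \Leftrightarrow x_*\in \mathcal{A}_*$.

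For the second equivalence, membership $x_*\in \mathcal{A}_*$ provides some $w\in\range U$ with $w\in \nabla f(x_*) + \partial(h+g)(x_*)$. Since $w\in\range U$, one can pick any $v_*\in\mathcal{X}$ such that $Uv_* = -\lambda w$ (existence is guaranteed by the definition of the range, and $\lambda\in\RealPP$ is simply a rescaling), so that $w = -Uv_*/\lambda$. This rewrites the inclusion as $-Uv_*/\lambda \in \nabla f(x_*) + \partial(h+g)(x_*)$, i.e.\ $(x_*,v_*)\in \Upsilon_*^{(\lambda)}$. The converse direction is immediate: any $(x_*,v_*)\in \Upsilon_*^{(\lambda)}$ supplies the witness $w \coloneqq -Uv_*/\lambda \in \range U$ for membership in $\mathcal{A}_*$.

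The only delicate point is the application of the sum rule used in the first step, which in general requires a constraint qualification. Since $\mathcal{A}$ is an affine set with $\relint \mathcal{A} = \mathcal{A}$, and since $f,h,g\in\Gamma_0(\mathcal{X})$ with the implicit domain compatibility required for \eqref{the.problem} to be well-posed (so that $\relint\domain(h+g) \cap \mathcal{A} \neq \emptyset$), the standard Moreau--Rockafellar sum rule applies. This is the main technical hurdle; everything else is a direct translation between a normal-cone representation and a range representation, with the existence of $v_*$ amounting to solving the linear equation $Uv_* = -\lambda w$ for $w\in\range U$.
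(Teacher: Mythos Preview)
The paper does not prove this statement; it is stated as a \emph{Fact} and attributed to \cite[Prop.~2.15]{FM-HSDM.Optim.18} without reproducing the argument, so there is no in-paper proof to compare against. Your proposal is correct and is the natural route: Fermat's rule plus the Moreau--Rockafellar sum rule gives $0\in\nabla f(x_*)+\partial(h+g)(x_*)+N_{\mathcal{A}}(x_*)$, and \cref{fact:U.I-Q} identifies $N_{\mathcal{A}}(x_*)=(\ker U)^{\perp}=\range U$ in the present finite-dimensional self-adjoint setting, after which the equivalence with $\Upsilon_*^{(\lambda)}$ is just the surjectivity of $U$ onto $\range U$ and a rescaling by $\lambda$.

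Two small remarks. First, your ``$\relint\iota_{\mathcal{A}}=\mathcal{A}$'' is a slip; you mean $\relint\domain\iota_{\mathcal{A}}=\mathcal{A}$. Second, the constraint qualification $\relint\domain(h+g)\cap\mathcal{A}\neq\emptyset$ that you correctly isolate as the only nontrivial step is indeed required for the sum rule; it is not stated explicitly in this paper but is carried as a standing hypothesis in the cited source and is implicit in the well-posedness of \eqref{the.problem}.
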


The following lemma is used repeatedly in the sequel.

\begin{lemma}\label{lem:expect.inner.prod}
  For any sub-$\sigma$-algebra $\mathcal{G} \subset\Sigma$,
  $\forall (x, x')\in\mathrm{m}\mathcal{G}\times \mathrm{m}\Sigma$,
  $\Expect_{\given \mathcal{G}} \innerp{x}{x'} = \innerp{x}{\Expect_{\given\mathcal{G}}(x')}$
  a.s. Given a linear mapping $Q: \mathcal{X}\to\mathcal{X}$, then,
  $\Expect_{\given \mathcal{G}}(Qx') = Q\Expect_{\given \mathcal{G}}(x')$. Further, if $Q$ is
  ``random,'' in the sense described earlier in this appendix, then
  $\Expect_{\given \mathcal{G}}(Qx) = \Expect_{\given \mathcal{G}}(Q)x$.
\end{lemma}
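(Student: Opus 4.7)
The plan is to reduce every claim to scalar conditional expectation on $(\Omega,\Sigma,\mathbb{P})$ via the orthonormal-basis representation already set up in \cref{app:preliminaries}. Fix an orthonormal basis $\{b_i\}_{i=1}^{\dim\mathcal{X}}$ and write $x=\sum_i \gamma^i b_i$ and $x'=\sum_j \gamma'^j b_j$ with $\gamma^i,\gamma'^j:\Omega\to\Real$; by hypothesis, $\gamma^i\in\mathrm{m}\mathcal{G}$ while $\gamma'^j\in\mathrm{m}\Sigma$. The two scalar tools to invoke are (a) linearity of $\Expect_{\given\mathcal{G}}(\cdot)$ and (b) the classical ``taking out what is known'' identity $\Expect_{\given\mathcal{G}}(\gamma\gamma')=\gamma\,\Expect_{\given\mathcal{G}}(\gamma')$ when $\gamma\in\mathrm{m}\mathcal{G}$ and $\gamma'\in L^1$ (see, e.g., \cite{Williams.book.91}). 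Once these are in place, all three claims unfold by direct computation; the only thing to keep honest is the bookkeeping of indices and the consistency of the basis-dependent definition of $\Expect_{\given\mathcal{G}}(Q)$ given in \cref{app:preliminaries}.

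For the first claim, I would expand $\innerp{x}{x'}=\sum_i\gamma^i\gamma'^i$ using orthonormality, apply (a) term-by-term and (b) to each summand because $\gamma^i\in\mathrm{m}\mathcal{G}$, and then recognize the result $\sum_i\gamma^i\Expect_{\given\mathcal{G}}(\gamma'^i)$ as $\innerp{x}{\Expect_{\given\mathcal{G}}(x')}$ via the definition of $\Expect_{\given\mathcal{G}}(x')=\sum_i\Expect_{\given\mathcal{G}}(\gamma'^i)b_i$ stated in the preliminaries.

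For the second claim (deterministic linear $Q$), I would write $Qb_j=\sum_k q^j_k b_k$ with $q^j_k\in\Real$, so that $Qx'=\sum_{j,k}q^j_k\gamma'^j\,b_k$. Taking coordinate-wise conditional expectation and using linearity of $\Expect_{\given\mathcal{G}}(\cdot)$, the $k$-th coordinate becomes $\sum_j q^j_k\Expect_{\given\mathcal{G}}(\gamma'^j)$; recognizing this as the $k$-th coordinate of $Q\Expect_{\given\mathcal{G}}(x')$ closes the identity. The random-$Q$ case runs along the same lines but with $Qb_i=\sum_k q^i_k b_k$ where now $q^i_k\in\mathrm{m}\Sigma$; since here $x\in\mathrm{m}\mathcal{G}$, the coefficients $\gamma^i$ can be pulled out through $\Expect_{\given\mathcal{G}}(\cdot)$ by (b), yielding $\sum_{i,k}\gamma^i\Expect_{\given\mathcal{G}}(q^i_k)b_k$, which by the basis-wise definition of $\Expect_{\given\mathcal{G}}(Q)$ (i.e., $\Expect_{\given\mathcal{G}}(Qb_i)=\sum_k\Expect_{\given\mathcal{G}}(q^i_k)b_k$) equals $\Expect_{\given\mathcal{G}}(Q)x$.

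There is no genuine obstacle to anticipate; the statement is essentially a bookkeeping exercise on top of standard scalar conditional-expectation rules. The only subtle point worth flagging in the write-up is that the third identity requires $x\in\mathrm{m}\mathcal{G}$ (so that the scalar coefficients $\gamma^i$ commute with $\Expect_{\given\mathcal{G}}(\cdot)$), which is exactly what the definition of $\Expect_{\given\mathcal{G}}(Q)\,x$ in \cref{app:preliminaries} presupposes, so no additional hypothesis is needed beyond what the lemma already lists implicitly through that definition.
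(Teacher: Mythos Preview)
Your proposal is correct and follows essentially the same approach as the paper: expand everything in a fixed orthonormal basis, reduce to scalar conditional expectations, and invoke linearity together with the ``taking out what is known'' property from \cite{Williams.book.91}. The only cosmetic difference is that the paper handles the deterministic-$Q$ claim by keeping the vectors $Qb_i$ intact rather than expanding them further into coordinates $q^j_k$, but the substance is identical.
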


\begin{proof}
  First, expectations are assumed to exist. Given an orthonormal basis
  $\Set{b_i}_{i=1}^{\dim\mathcal{X}}$ of $\mathcal{X}$, there exist $\Real$-valued RVs
  $\Set{\gamma^i, \gamma'^i}_{i=1}^{\dim\mathcal{X}}$ s.t.\ $x = \sum_i \gamma^ib_i$ and
  $x' = \sum_i \gamma'^ib_i$ a.s. Hence, according to basic properties of conditional
  expectation~\cite[\S9.7(c)(j)]{Williams.book.91},
  $\Expect_{\given\mathcal{G}} \innerp{x}{x'} = \sum_{i,i'} \gamma^i
  \Expect_{\given\mathcal{G}}(\gamma'^{i'}) \innerp{b_i}{b_{i'}} = \innerp{\sum_i
    \gamma^ib_i}{\sum_{i'} \Expect_{\given\mathcal{G}}(\gamma'^{i'}) b_{i'}} =
  \innerp{x}{\Expect_{\given\mathcal{G}}(x')}$ a.s. Further,
  $\Expect_{\given \mathcal{G}}(Qx') = \Expect_{\given \mathcal{G}} (\sum_i \gamma'^i Qb_i) = \sum_i
  \Expect_{\given \mathcal{G}}(\gamma'^i) Qb_i = Q\Expect_{\given \mathcal{G}}(x')$. Similar
  arguments can lead to the final claim of \cref{lem:expect.inner.prod}.
\end{proof}

\section{Proof of \cref{lem:T.LS}}\label[appendix]{app:T.LS}

First, notice that
$\mathcal{A} = \arg\min_{\vect{x}\in \mathcal{X}} \norm{\vect{R}^{1/2} \vect{x} -
  \vect{R}^{\dagger/2} \vect{r}}^2$, where $\vect{R}^{1/2}$ is the square root of $\vect{R}$ and
$\dagger$ stands for the Moore-Penrose pseudoinverse operation. The previous equality can be
established by observing that $\vect{R}^{1/2}\vect{R}^{\dagger/2}$ is the orthogonal projection
mapping onto the range space $\range\vect{R}^{1/2} = \range\vect{R}$, and that
$\vect{r}\in \range\vect{R}$ due to the normal equations. The claim that mappings \eqref{T.LS}
belong to $\mathfrak{T}_{\mathcal{A}}$ follows then directly from~\cite[(70a) and
(70d)]{FM-HSDM.Optim.18}. The claim of \cref{lem:T.LS} with regards to the mappings \eqref{Tn.LS}
can be also established in a similar way; details are omitted.

\section{Proof of \cref{main.thm}}\label[appendix]{app:main.thm}

Line~\ref{SFMHSDM.step.n.half} of \cref{algo:SFMHSDM} yields
\begin{alignat}{2}
  x_{n+3/2} - x_{n+1/2}
  & {} = {} && T_{n+1}x_{n+1} - T_n^{(\alpha)} x_n \notag \\
  &&& - \lambda \nabla f_{n+1} (x_{n+1}) + \lambda\nabla f_n(x_n)\,. \label{diff.halves}
\end{alignat}
By line~\ref{SFMHSDM.prox.step.n} of \cref{algo:SFMHSDM} and \eqref{def.proximal},
$\exists\xi_n\in \partial (h_{n-1}+g)(x_n)$, or, equivalently
\begin{align}
  (x_n, \xi_n)\in \graph\partial(h_{n-1}+g) \,,
  \label{existence.xi}
\end{align}    
s.t.\ $x_{n-1/2} = x_n + \lambda \xi_n$, $\forall n\in\IntegerPP$. Moreover,
lines~\ref{SFMHSDM.step.half} and \ref{SFMHSDM.prox.step.1} of \cref{algo:SFMHSDM}, as well as
\eqref{diff.halves} and \eqref{existence.xi} suggest
\begin{subequations}\label{diff.eqs.x}
  \begin{alignat}{2}
    x_1
    & {} = {} && T_0^{(\alpha)} x_0 - \lambda \left[\nabla f_0(x_0) + \xi_1\right]\,,
    \label{diff.eqs.x.1}\\ 
    x_{n+2} - x_{n+1}
    & = && T_{n+1} x_{n+1} - T_n^{(\alpha)} x_n \notag\\
    &&& - \lambda \left[\nabla f_{n+1}(x_{n+1}) + \xi_{n+2} \right] \notag\\
    &&& + \lambda \left[\nabla f_n(x_{n}) + \xi_{n+1} \right] \,. \label{diff.eqs.x.n+1}
  \end{alignat}
\end{subequations}
By telescoping \eqref{diff.eqs.x},
\begin{alignat*}{2}
  x_{n+1} 
  & {} = {} && T_nx_n - \sum_{\nu=1}^{n-1} (T_{\nu}^{(\alpha)} - T_{\nu})x_{\nu} - \lambda
  \left[\nabla f_n(x_n) + \xi_{n+1}\right] \\ 
  & = && 2T_{n+1}^{(\alpha)} x_{n+1} - T_{n+1} x_{n+1} + (T_{n}^{(\alpha)}x_n -
  T_{n+1}^{(\alpha)}x_{n+1}) \\
  &&& - \sum_{\nu=1}^{n+1} (T_{\nu}^{(\alpha)} -T_{\nu}) x_{\nu} - \lambda \left[\nabla
    f_n(x_n) + \xi_{n+1}\right]\,, 
\end{alignat*}
or, equivalently, via $T_{\nu}^{(\alpha)} - T_{\nu} = (1-\alpha) (\Id-T_{\nu})$,
\begin{align}
  & (\Id + T_{n+1} - 2T_{n+1}^{(\alpha)})x_{n+1} + (T_{n+1}^{(\alpha)} x_{n+1} - T_{n}^{(\alpha)}
    x_n) \notag\\ 
  & = - (1-\alpha) \sum_{\nu=1}^{n+1} (\Id - T_\nu)x_{\nu} - \lambda
    \left[\nabla f_n(x_n) + \xi_{n+1}\right]\,, \label{first.recursive.eq}
\end{align}
where \eqref{first.recursive.eq} holds true $\forall n\in\IntegerP$. Furthermore,
\begin{alignat}{2}
  &&& \hspace{-1em} (1-2\alpha) (T_{n+1}-\Id) x_{n+1} + Q_{n+1}^{(\alpha)} (x_{n+1} - x_n) \notag\\
  &&& + \alpha (T_{n+1} - T_n)x_n \notag\\
  & {} = {} && (1-2\alpha)(T_{n+1}-\Id) x_{n+1} + (T_{n+1}^{(\alpha)}x_{n+1} -
  T_{n+1}^{(\alpha)}x_n) \notag\\
  &&& + (T_{n+1}^{(\alpha)} - T_{n}^{(\alpha)})x_n \notag\\
  & {} = {} && (\Id + T_{n+1} - 2T_{n+1}^{(\alpha)})x_{n+1} + (T_{n+1}^{(\alpha)} x_{n+1} -
  T_{n}^{(\alpha)} x_n) \notag\\
  & \stackrel{\eqref{first.recursive.eq}}{=} && - w_{n+1} - \lambda \left[\nabla f_n(x_n) +
    \xi_{n+1}\right] \,,\label{gradf.and.ksi}
\end{alignat}
where $\forall n\in\IntegerPP$,
\begin{align}
  w_{n+1} \coloneqq (1-\alpha) \sum\nolimits_{\nu=1}^{n+1} (\Id - T_\nu)x_{\nu} \,. \label{define.w}
\end{align}
Moreover, given $x_*\in\mathcal{A}$, define $\forall n\in\IntegerPP$,
\begin{align}
  v_{n+1} \coloneqq (1-\alpha) \sum\nolimits_{\nu=1}^{n+1} U(x_{\nu}-x_*) \,. \label{define.v}
\end{align}
where $U$ is defined in \cref{fact:U.I-Q}. Also, let $v_0 \coloneqq 0 \eqqcolon w_0$. Notice that
$v_{n+1}$ does not depend on the choice of $x_*\in\mathcal{A}$, since $\forall x_*'\in \mathcal{A}$,
with $x_*'\neq x_*$, \cref{fact:U.I-Q} yields $x_*'-x_*\in \ker U$, and
\begin{align*}
  v_{n+1} & = (1-\alpha) \sum\nolimits_{\nu=1}^{n+1} U(x_{\nu}-x_*'+x_*'-x_*) \\
          & = (1-\alpha) \sum\nolimits_{\nu=1}^{n+1} U(x_{\nu}-x_*')\,.
\end{align*}
Notice again by \cref{fact:U.I-Q} that $x_*\in\mathcal{A} \Leftrightarrow (\Id-T)x_* = 0$. Then, by
\eqref{define.w} and \eqref{define.v}, $\forall n\in\IntegerPP$,
\begin{alignat}{2}
  w_{n+1} & {} = {} && (1-\alpha) \sum\nolimits_{\nu=1}^{n+1} (T-T_{\nu})x_{\nu} \notag\\
  &&& + (1-\alpha) \sum\nolimits_{\nu=1}^{n+1} [(\Id - T)x_{\nu}-(\Id - T)x_*] \notag\\
  & {} = {} && (1-\alpha) \sum\nolimits_{\nu=1}^{n+1} (T-T_{\nu})x_{\nu} \notag\\
  &&& + (1-\alpha) \sum\nolimits_{\nu=1}^{n+1} (\Id - Q)(x_{\nu}-x_*) \notag\\
  & = && (1-\alpha) \sum\nolimits_{\nu=1}^{n+1} (T-T_{\nu})x_{\nu} + Uv_{n+1} \,. \label{w.expression}
\end{alignat}

Arbitrarily fix, now, $(x_*, v_*)\in \Upsilon_*^{(\lambda)}$ of \cref{fact:Ypsilon.star}:
$(\Id-T)x_* = 0$ and $\exists \xi_*\in \partial (h + g)(x_*)$ s.t.\
$Uv_* + \lambda[\nabla f (x_*) + \xi_*] = 0$. Then, by \eqref{gradf.and.ksi}, \eqref{w.expression},
\begin{alignat}{2}
  && (1-2\alpha) & (T_{n+1} - T)x_{n+1} + (1-2\alpha)(T-\Id)x_{n+1} \notag\\
  &&& + \alpha(Q_{n+1} - Q)(x_{n+1} - x_n) \notag\\ 
  &&& + Q^{(\alpha)}(x_{n+1} - x_n) + \alpha (T_{n+1} - T_n)x_n \notag\\
  && {} = {} & - (1-\alpha) \sum\nolimits_{\nu=1}^{n+1} (T - T_\nu)x_{\nu} \notag\\
  &&& - U(v_{n+1}-v_*) -\lambda\left[\nabla f_n(x_n) - \nabla f(x_*) \right] \notag\\
  &&& - \lambda(\xi_{n+1} - \xi_*) \notag\\
  \Leftrightarrow {} && (1-2\alpha) & (\Id-T)x_{n+1} + Q^{(\alpha)}(x_n-x_{n+1}) \notag\\
  &&& + U(v_*-v_{n+1}) + (1-2\alpha) (T-T_{n+1}) x_{n+1} \notag\\
  &&& + \alpha(Q-Q_{n+1})(x_{n+1} - x_n) \notag\\
  &&& + \alpha (T_n-T_{n+1})x_n \notag\\
  &&& + (1-\alpha) \sum\nolimits_{\nu=1}^{n+1} (T_{\nu}-T)x_{\nu}  \notag\\
  && {} = {} & \lambda \left[ \nabla f_n(x_n) - \nabla f(x_*) + \xi_{n+1} - \xi_* \right]
  \,. \label{to.use.in.BH}
\end{alignat}
Based on \cref{as:f.Lipschitz}, the application of the Baillon-Haddad theorem~\cite[Cor.~18.16]{HB.PLC.book} to $f$ suggests that%
\begin{subequations}%
  \begin{alignat}{2}
    &&& \hspace{-2em} \tfrac{2\lambda}{L_{\nabla f}} \norm{\nabla f(x_n) - \nabla f(x_*)}^2 \notag \\
    & {} \leq {} && 2\lambda \innerp{x_n-x_*}{\nabla f(x_n) - \nabla
      f(x_*)} \notag \\
    & = &&
    2\lambda \innerp{x_{n+1}-x_*}{\nabla f_n(x_n) - \nabla f(x_*) + \xi_{n+1}-\xi_*} \notag\\
    &&& + 2\lambda \innerp{x_{n+1}-x_*}{(\nabla f-\nabla f_n)x_n} \notag\\
    &&& + 2\lambda \innerp{x_n-x_{n+1}}{\nabla f(x_n) - \nabla f(x_*)} \notag\\
    &&& + 2\lambda \innerp{x_*-x_{n+1}}{\xi_{n+1}-\xi_*} \notag\\
    & \stackrel{\eqref{to.use.in.BH}}{=} && 2 (1-2\alpha) \innerp{x_{n+1}-x_*}{(\Id-T)x_{n+1}}
    \notag\\
    &&& + 2\innerp{x_{n+1}-x_*}{Q^{(\alpha)}(x_n-x_{n+1})} \notag\\
    &&& + 2\innerp{x_{n+1}-x_*}{U(v_*-v_{n+1})} \notag\\
    &&& + 2\lambda \innerp{x_n-x_{n+1}}{\nabla f(x_n) - \nabla f(x_*)} \notag\\
    &&& + 2 (1-2\alpha) \innerp{x_{n+1}-x_*}{(T-T_{n+1})x_{n+1}} \notag\\
    &&& + 2\alpha \innerp{x_{n+1}-x_*}{(Q-Q_{n+1})(x_{n+1} - x_n)} \notag\\
    &&& + 2\alpha \innerp{x_{n+1}-x_*}{(T_n-T_{n+1})x_n} \notag\\
    &&& + 2(1-\alpha) \innerp{x_{n+1}-x_*}{\sum\nolimits_{\nu=1}^{n+1}(T_{\nu}-T)x_{\nu}} \notag\\
    &&& + 2\lambda \innerp{x_{n+1}-x_*}{(\nabla f-\nabla f_n)x_n} \notag\\
    &&& + 2\lambda \innerp{x_*-x_{n+1}}{\xi_{n+1}-\xi_*} \notag\\
    & \leq && 2 (1-2\alpha) \innerp{x_{n+1}-x_*}{(\Id-T)x_{n+1} - (\Id-T)x_{*}} \notag\\
    &&& + 2\innerp{x_{n+1}-x_*}{Q^{(\alpha)}(x_n-x_{n+1})} \notag\\
    &&& + 2\innerp{x_*-x_{n+1}}{U(v_{n+1}-v_*)} \notag\\
    &&& + \tfrac{\lambda L_{\nabla f}}{2} \norm{x_n-x_{n+1}}^2 + \tfrac{2\lambda}{L_{\nabla f}}
    \norm{\nabla f(x_n) - \nabla f(x_*)}^2 \label{Young.inequality}\\
    &&& + 2 (1-2\alpha) \innerp{x_{n+1}-x_*}{(T-T_{n+1})x_{n+1}}
    \label{vartheta.start}\\
    &&& + 2\alpha \innerp{x_{n+1}-x_*}{(Q-Q_{n+1})(x_{n+1} - x_n)} \notag\\
    &&& + 2\alpha \innerp{x_{n+1}-x_*}{(T_n-T_{n+1})x_n} \notag\\
    &&& + 2(1-\alpha) \innerp{x_{n+1}-x_*}{\sum\nolimits_{\nu=1}^{n+1} (T_{\nu}-T)x_{\nu}} \notag\\
    &&& + 2\lambda \innerp{x_{n+1}-x_*}{(\nabla f-\nabla f_n)x_n} \notag \\
    &&& + 2\lambda \innerp{x_*-x_{n+1}}{\xi_{n+1}-\xi_*}\,, \label{BH.thm}
  \end{alignat}%
\end{subequations}%
where $2\innerp{\sqrt{\beta}a}{b/\sqrt{\beta}} \leq \beta\norm{a}^2 + \norm{b}^2/\beta$,
$a\coloneqq x_n-x_{n+1}$, $b\coloneqq \nabla f(x_n) - \nabla f(x_*)$ and
$\beta \coloneqq L_{\nabla f}/2$, were used in \eqref{Young.inequality}. Let $\vartheta_n$ be the RV
defined by the expression which starts from \eqref{vartheta.start} and ends at \eqref{BH.thm}.

Let $y \coloneqq (x,v)$ denote an element of the finite-dimensional Hilbert space
$(\mathcal{X}^2, \innerp{\cdot}{\cdot}_{\mathcal{X}^2})$, where the inner product is defined as
$\innerp{(x, v)}{(x', v')}_{\mathcal{X}^2} \coloneqq \innerp{x}{x'} + \innerp{v}{v'}$, for any
$(x,v), (x',v')\in\mathcal{X}^2$. Let also the bounded linear and self-adjoint operator
$\Theta: \mathcal{X}^2 \to \mathcal{X}^2: (x,v) \mapsto (Q^{(\alpha)}x, v/(1-\alpha))$. By virtue of
the positivity of $Q$, $\forall x$,
\begin{align}
  \innerp{Q^{(\alpha)}x}{x}
  & = \alpha \innerp{Qx}{x} + (1-\alpha)\norm{x}^2 \notag\\
  & \geq (1-\alpha)\norm{x}^2\,, \label{Qalpha.strongly.pos}
\end{align}
which renders $\Theta$ strongly positive (recall $\alpha<1$). Operator $\Theta$ induces thus the
Hilbert space
$\mathcal{X}^2_{\Theta} \coloneqq (\mathcal{X}^2, \innerp{\cdot}{\cdot}_{\mathcal{X}^2_{\Theta}})$
with inner product
$\innerp{\cdot}{\cdot}_{\mathcal{X}^2_{\Theta}} \coloneqq
\innerp{\Theta(\cdot)}{\cdot}_{\mathcal{X}^2}$. Then, upon defining $y_* \coloneqq (x_*, v_*)$,
\eqref{BH.thm} yields%
\begin{subequations}%
  \begin{alignat}{2}
    0 & {} \leq {} && 2 (1-2\alpha) \innerp{x_{n+1}-x_*}{(\Id-Q)(x_{n+1}-x_*)} \notag \\
    &&& + 2\innerp{Q^{(\alpha)}(x_n-x_{n+1})}{x_{n+1}-x_*} \notag \\
    &&& + 2\innerp{U(x_*-x_{n+1})}{v_{n+1}-v_*} \notag\\
    &&& + \tfrac{\lambda L_{\nabla f}}{2}\norm{x_n-x_{n+1}}^2 + \vartheta_n \notag\\
    & \leq && 2\innerp{Q^{(\alpha)}(x_n-x_{n+1})}{x_{n+1}-x_*} + \tfrac{\lambda L_{\nabla f}}{2}
    \norm{x_n-x_{n+1}}^2 \notag\\
    &&&  + \tfrac{2}{1-\alpha} \innerp{v_n-v_{n+1}}{v_{n+1}-v_*} + \vartheta_n \label{vartheta.1}\\
    & = && 2\innerp{\Theta(y_n-y_{n+1})}{y_{n+1}-y_*}_{\mathcal{X}^2}  \notag\\
    &&& + \tfrac{\lambda L_{\nabla f}}{2} \norm{x_n-x_{n+1}}^2 + \vartheta_n \notag\\  
    & = && \norm{y_n-y_*}_{\mathcal{X}^2_{\Theta}}^2 - \norm{y_{n+1}-y_*}_{\mathcal{X}^2_{\Theta}}^2 -
    \norm{y_{n+1}-y_n}_{\mathcal{X}^2_{\Theta}}^2 \notag\\
    &&& + \tfrac{\lambda L_{\nabla f}}{2} \norm{x_n-x_{n+1}}^2 + \vartheta_n \notag\\
    & \leq  && \norm{y_n-y_*}_{\mathcal{X}^2_{\Theta}}^2 -
    \norm{y_{n+1}-y_*}_{\mathcal{X}^2_{\Theta}}^2 \notag\\ 
    &&& - (1-\zeta) \norm{y_{n+1}-y_n}_{\mathcal{X}^2_{\Theta}}^2 +
    \vartheta_n \label{vartheta.2}\,,
  \end{alignat}%
\end{subequations}%
where the positivity of $\Id-Q$ from \cref{fact:U.I-Q}, $\alpha\geq 1/2$, and
$v_n-v_{n+1} = (1-\alpha) U(x_*-x_{n+1})$, from \eqref{define.v}, were used in
\eqref{vartheta.1}. Any $\zeta\in (\lambda L_{\nabla f}/[2(1-\alpha)],1)$ justifies
\eqref{vartheta.2}, since for $\lambda < 2(1-\alpha)/L_{\nabla f}$ and
$\forall y \coloneqq (x, v)\in \mathcal{X}^2$, \eqref{Qalpha.strongly.pos} suggests
\begin{align*}
  \tfrac{\lambda L_{\nabla f}}{2} \norm{x}^2
  & < \zeta(1-\alpha) \norm{x}^2 \leq \zeta \innerp{x}{Q^{(\alpha)}x} \\
  & \leq \zeta \innerp{x}{Q^{(\alpha)}x} + \zeta\tfrac{1}{1-\alpha}\norm{v}^2 = \zeta
    \norm{y}^2_{\mathcal{X}^2_{\Theta}} \,.
\end{align*}
Notice by \eqref{define.v} that $v_n \in\mathrm{m}\mathcal{F}_n$. Hence,
$y_n = (x_n, v_n) \in\mathrm{m}\mathcal{F}_n$. Applying $\Expect_{\given \mathcal{F}_n}(\cdot)$ to
\eqref{vartheta.2} yields%
\begin{alignat}{2}
  \Expect_{\given \mathcal{F}_n} &&& \norm{y_{n+1}-y_*}_{\mathcal{X}_{\Theta}^2}^2 + (1-\zeta)
  \Expect_{\given \mathcal{F}_n} \norm{y_{n+1}-y_n}_{\mathcal{X}_{\Theta}^2}^2 \notag\\
  &&& \leq \norm{y_n-y_*}_{\mathcal{X}_{\Theta}^2}^2 + \Expect_{\given \mathcal{F}_n}
  (\vartheta_n)^+ \quad\text{a.s.,} \label{Fejer.cond} 
\end{alignat}
where
$\Expect_{\given \mathcal{F}_n} (\vartheta_n)^+ \coloneqq \max\Set{0, \Expect_{\given \mathcal{F}_n}
  (\vartheta_n)}$. Since $(x_*, v_*)$ was arbitrarily chosen from $\Upsilon_*^{(\lambda)}$ (\cf
\cref{fact:Ypsilon.star}), \cref{as:domination} and \cite[Prop.~2.3]{PLC.JCP.stochastic.QF.15}
render $(y_n)_n$ stochastic quasi-Fej\'{e}r monotonous w.r.t.\ $\Upsilon_*^{(\lambda)}$; thus,
bounded a.s. Due to $y_n = (x_n, v_n)$, sequences $(x_n)_n$ and $(v_n)_n$ are also bounded a.s.

This paragraph proves the claim that \eqref{strict.conditions} suffice for \cref{as:domination} to
hold true. Via \eqref{unbiased.subgrad.hg}, a.s.,
\begin{align*}
  & (h+g)(\vect{x}_*) \\
  & \geq (h+g)(\vect{x}_{n+1}) + \innerp{\vect{x}_*- \vect{x}_{n+1}}{\Expect_{\given
    \mathcal{F}_{n+1}}(\bm{\xi}_{n+1})} - \epsilon_{n+1} \,.
\end{align*}
Moreover,
$(\vect{x}_*, \bm{\xi}_*)\in \graph\partial(h+g) \Rightarrow (h+g)(\vect{x}_{n+1}) \geq
(h+g)(\vect{x}_*) + \innerp{\vect{x}_{n+1}- \vect{x}_*}{\bm{\xi}_*}$. Hence, by adding the previous
two inequalities,
$\innerp{\vect{x}_*- \vect{x}_{n+1}}{\Expect_{\given \mathcal{F}_{n+1}}(\bm{\xi}_{n+1})- \bm{\xi}_*}
\leq \epsilon_{n+1}$. The ``tower property'' of conditional probability suggests
$\Expect_{\given \mathcal{F}_n} (\vartheta_n) = \Expect_{\given \mathcal{F}_n} \Expect_{\given
  \mathcal{F}_{n+1}}(\vartheta_n)$~\cite[\S9.7(i)]{Williams.book.91}. By
$\vect{x}_{n+1}- \vect{x}_*\in \mathrm{m}\mathcal{F}_{n+1}$, \eqref{strict.conditions},
\eqref{BH.thm} and \cref{lem:expect.inner.prod},
\begin{alignat}{2}
  &&& \hspace{-1.25em} \Expect_{\given \mathcal{F}_{n+1}} (\vartheta_n) \notag\\
  && {} = {} & 2 (1-2\alpha) \innerp{\vect{x}_{n+1} - \vect{x}_*}{\Expect_{\given
      \mathcal{F}_{n+1}}[(T-T_{n+1}) \vect{x}_{n+1}]} \notag\\
  &&& + 2\alpha \innerp{\vect{x}_{n+1} - \vect{x}_*}{\Expect_{\given
      \mathcal{F}_{n+1}}[(Q-Q_{n+1})(\vect{x}_{n+1} - \vect{x}_n)]} \notag\\
  &&& + 2\alpha \innerp{\vect{x}_{n+1} - \vect{x}_*}{\Expect_{\given
      \mathcal{F}_{n+1}}[(T_n-T_{n+1}) \vect{x}_n]} \notag\\  
  &&& + 2(1-\alpha) \innerp{\vect{x}_{n+1} - \vect{x}_*}{-\vect{t}_{n+1}} \notag\\
  &&& + 2\lambda \innerp{\vect{x}_{n+1} - \vect{x}_*}{\Expect_{\given \mathcal{F}_{n+1}} [(\nabla f - \nabla
    f_{n+1}) \vect{x}_n]} \notag \\ 
  &&& + 2\lambda \innerp{\vect{x}_{n+1} - \vect{x}_*}{\Expect_{\given \mathcal{F}_{n+1}} [(\nabla
    f_{n+1} - \nabla f_n) \vect{x}_n]} \notag \\
  &&& + 2\lambda \innerp{\vect{x}_* - \vect{x}_{n+1}}{\Expect_{\given
      \mathcal{F}_{n+1}}(\bm{\xi}_{n+1})- \bm{\xi}_*} \leq 2\lambda \epsilon_{n+1}
  \,. \label{prove.suff.cond.theta} 
\end{alignat}
Thus,
$\Expect_{\given \mathcal{F}_{n}} (\vartheta_n)\leq 2\lambda \Expect_{\given
  \mathcal{F}_{n}}(\epsilon_{n+1}) \Rightarrow \Expect_{\given \mathcal{F}_n} (\vartheta_n)^+ \leq
2\lambda \Expect_{\given \mathcal{F}_{n}}(\epsilon_{n+1}) \Rightarrow \sum_n \Expect_{\given
  \mathcal{F}_n} (\vartheta_n)^+ \leq 2\lambda \sum_n \Expect_{\given \mathcal{F}_{n}}(\epsilon_{n+1})$
a.s. By \eqref{unbiased.subgrad.hg},
$\sum_n \Expect[\Expect_{\given \mathcal{F}_{n}}(\epsilon_{n+1})] = \sum_n \Expect(\epsilon_{n+1}) <
+\infty$, and $\psi \coloneqq 2\lambda \sum_n \Expect_{\given \mathcal{F}_{n}}(\epsilon_{n+1})$, a.s.,
satisfies \cref{as:domination}.

Going back to the general setting, define now space
$\mathfrak{X} \coloneqq L^2[(\Omega, \Sigma, \Prob), \mathcal{X}]$ of (equivalent classes of Borel)
measurable functions, or, RVs $x: \Omega\to \mathcal{X}$ s.t.\
$\int_{\Omega} \norm{x(\omega)}^2 \Prob(d\omega) < +\infty$.  This RV-space $\mathfrak{X}$ turns out
to be a real Hilbert one with inner product
$\innerp{x}{x'}_{\mathfrak{X}} \coloneqq \Expect(\innerp{x}{x'}) \coloneqq \int_{\Omega}
\innerp{x(\omega)}{x'(\omega)} \Prob(d\omega)$,
$\forall (x,x')\in \mathfrak{X} \times \mathfrak{X}$~\cite[Ex.~2.5, p.~28]{HB.PLC.book}. Hilbert
space $\mathfrak{X}^2_{\Theta} \coloneqq L^2[(\Omega, \Sigma, \Prob), \mathcal{X}^2_{\Theta}]$ is
similarly defined, with inner product
$\innerp{y}{y'}_{\mathfrak{X}} \coloneqq \Expect(\innerp{y}{y'}_{\mathcal{X}^2_{\Theta}})$.

Application of $\Expect(\cdot)$ to \eqref{Fejer.cond}, under the light of
$\Expect(\cdot) = \Expect[\Expect_{\given \mathcal{F}_n}(\cdot)]$~\cite[\S9.7(a)]{Williams.book.91},
yields
\begin{align}
  \norm{y_{n+1}-y_*}_{\mathfrak{X}_{\Theta}^2}^2 & + (1-\zeta) 
  \norm{y_{n+1}-y_n}_{\mathfrak{X}_{\Theta}^2}^2 \notag\\
  & \leq \norm{y_n-y_*}_{\mathfrak{X}_{\Theta}^2}^2 + \Expect[\Expect_{\given \mathcal{F}_n}
    (\vartheta_n)^+]  \,. \label{Fejer.cond.type3} 
\end{align}
The monotone-convergence theorem~\cite[\S5.3]{Williams.book.91} and \cref{as:domination} imply that
$\sum_{n} \Expect [\Expect_{\given\mathcal{F}_n} (\vartheta_n)^+] \leq \Expect(\psi) < +\infty$. As
such, \eqref{Fejer.cond.type3} renders $(y_n)_n$ quasi-Fej\'{e}r (of type~III) w.r.t.\
$\Upsilon_*^{(\lambda)}$ and, thus, bounded within
$\mathfrak{X}^2_{\Theta}$~\cite[Prop.~3.3]{PLC.quasi.Fejer.2001}. Hence, both $(x_n)_n$ and
$(v_n)_n$ are bounded within $\mathfrak{X}$. Moreover, by telescoping \eqref{Fejer.cond.type3},
$\forall n$,
$\sum_{\nu=0}^{n} \norm{y_{\nu+1}-y_{\nu}}_{\mathfrak{X}_{\Theta}^2}^2 \leq (1-\zeta)^{-1}
[\norm{y_0-y_*}_{\mathfrak{X}_{\Theta}^2}^2 + \Expect(\psi)]$. Hence,
$\sum_{n=0}^{+\infty} \Expect (\norm{y_{n+1}-y_n}_{\mathcal{X}_{\Theta}^2}^2) < +\infty$. By
\cite[\S6.5]{Williams.book.91}, $\norm{y_{n+1}-y_n}_{\mathcal{X}_{\Theta}^2}^2 \limas_n 0$, and
thus, $(y_{n+1}-y_n) \limas_n 0$ by virtue of the strong positivity of $\Theta$. Consequently,
\begin{align}
  (x_{n+1}-x_n) \limas_n 0\,, \quad (v_{n+1}-v_n) \limas_n 0\,; \label{x.v.as.regular}
\end{align}
hence, both $(x_{n+1}-x_n)_n$ and $(v_{n+1}-v_n)_n$ are bounded a.s.

By \eqref{w.expression},
\begin{alignat*}{2}
  &&& \hspace{-2em} w_{n+1} - w_{n} \\
  & {} = {} && (1-\alpha) (T-T_{n+1})x_{n+1} + U(v_{n+1}- v_n) \\
  & = && (1-\alpha) (Tx_{n+1} - Tx_n) + (1-\alpha) (T-T_{n+1})x_n  \\
  &&& + (1-\alpha) (T_{n+1}x_n - T_{n+1}x_{n+1}) + U(v_{n+1}- v_n) \\
  & = && (1-\alpha) Q(x_{n+1}-x_n) + (1-\alpha) (Q-Q_{n+1})x_n \\
  &&& + (1-\alpha) (\pi-\pi_{n+1}) + (1-\alpha) Q_{n+1}(x_n-x_{n+1}) \\
  &&& + U(v_{n+1}- v_n) \,.
\end{alignat*}
Since $(x_n)_n$ is bounded a.s., there exists $C_1 \coloneqq C_1(\omega)\in \RealPP$ s.t.\ $\norm{x_n}\leq
C_1$, $\forall n$, a.s. Consequently, by \cref{fact:U.I-Q}, \cref{as:T,as:Tn},
\begin{alignat}{2}
  &&& \hspace{-2em} \norm{w_{n+1} - w_{n}} \notag\\
  & {} \leq {} && (1-\alpha) \norm{Q}\,\norm{x_{n+1}-x_n} + (1-\alpha) \norm{Q-Q_{n+1}}\,
  \norm{x_n} \notag\\ 
  &&& + (1-\alpha) \norm{\pi-\pi_{n+1}} + (1-\alpha) \norm{Q_{n+1}}\, \norm{x_n-x_{n+1}} \notag\\
  &&& + \norm{U}\, \norm{v_{n+1}- v_n} \notag\\
  & \leq && \norm{x_{n+1}-x_n} + C_1 \norm{Q-Q_{n+1}} + \norm{\pi-\pi_{n+1}} \notag\\
  &&& + \norm{x_n-x_{n+1}} + \norm{v_{n+1}- v_n} \,, \label{consecutive.w}
\end{alignat}
Via \cref{as:consistent.Tn} and \eqref{x.v.as.regular}, \eqref{consecutive.w} yields
\begin{align}
  w_{n+1} - w_{n}\limas_n 0\,. \label{wn.as.regular}
\end{align}
Hence, for any $\epsilon\in\RealPP$, there exists
$n_{\mypound} \coloneqq n_{\mypound}(\omega) \in\IntegerP$ s.t.\ $\forall n\geq n_{\mypound}$,
$\norm{w_{n+1} - w_{n}}\leq \epsilon$ a.s. Notice also via Jensen's
inequality~\cite[\S9.7(h)]{Williams.book.91} that
$\norm{\Expect_{\given \mathcal{F}_n} (w_{n+1}) - \Expect_{\given \mathcal{F}_n} (w_{n})} =
\norm{\Expect_{\given \mathcal{F}_n} (w_{n+1} - w_{n})} \leq \Expect_{\given \mathcal{F}_n}
(\norm{w_{n+1} - w_{n}}) \leq \epsilon$, and thus,
$\limsup_n \norm{\Expect_{\given \mathcal{F}_n} (w_{n+1}) - \Expect_{\given \mathcal{F}_n} (w_{n})}
\leq \epsilon$ a.s. Since $\epsilon$ is chosen arbitrarily,
\begin{align}
  \Expect_{\given \mathcal{F}_n} (w_{n+1}) - \Expect_{\given
    \mathcal{F}_n} (w_{n}) \limas_n0 \,. \label{expect.wn.as.regular}
\end{align}

Furthermore, by \eqref{define.w}, $(\Id-T_{n+1})x_{n+1} = (w_{n+1}-w_n)/(1-\alpha)$, which, together
with \eqref{wn.as.regular}, yields
\begin{align}
  (\Id-T_n)x_n \limas_n 0\,. \label{I.minus.Tn.to.zero}
\end{align}
As such, $((\Id-T_n)x_n)_n$ is bounded a.s. Moreover,
\begin{alignat*}{2}
  &&& \hspace{-2em} \norm{(\Id-T)x_n} \\
  & {} \leq {} && \norm{[(\Id-T) - (\Id-T_n)]x_n} + \norm{(\Id-T_n)x_n} \\
  & \leq && \norm{Q_n-Q}\, \norm{x_n} + \norm{\pi_n-\pi} + \norm{(\Id-T_n)x_n}\\
  & \leq && C_1 \norm{Q_n-Q} + \norm{\pi_n-\pi} + \norm{(\Id-T_n)x_n}\,.
\end{alignat*}
Referring again to \cref{as:consistent.Tn}, \eqref{I.minus.Tn.to.zero} and the previous inequality
yield
\begin{align}
  (\Id-T)x_n \limas_n 0\,. \label{I.minus.T.to.zero}
\end{align}
Moreover,
\begin{alignat*}{2}
  \norm{(T_n - T_{n+1}) x_n} & {} \leq {} && \norm{(Q_n - Q_{n+1}) x_n} + \norm{\pi_n-\pi_{n+1}} \\
  & \leq && \norm{Q_n - Q_{n+1}} \norm{x_n} + \norm{\pi_n-\pi_{n+1}} \\ 
  & \leq && C_1 \norm{Q_n - Q_{n+1}} + \norm{\pi_n-\pi_{n+1}} \,,
\end{alignat*}
which, according to \cref{as:consistent.Tn}, leads to
\begin{align}
  (T_n - T_{n+1}) x_n \limas_n 0\,. \label{Tn.as.regular}
\end{align}
Hence, $((T_n - T_{n+1}) x_n)_n$ is bounded a.s.

\cref{as:fn.Lipschitz,as:bounded.Lips} suggest that for any $z\in\mathcal{X}$,
\begin{align}
  & \norm{\nabla f_{n}(x_{n}) - \nabla f(z)}^2 \notag\\
  & \leq  2\norm{\nabla f_{n}(x_{n}) - \nabla f_{n}(z)}^2 +
    2\norm{(\nabla f_{n} -  \nabla f)z}^2 \notag\\
  & \leq 2L_{n}^2 \norm{x_{n}-z}^2 + 2\norm{(\nabla f_{n}-\nabla f)z}^2 \notag\\
  & \leq 2C_{\text{Lip}}^2 \norm{x_{n}-z}^2 + 2\norm{(\nabla f_{n}-\nabla f)z}^2
    \,.  \label{gradientf.converges}
\end{align}
The a.s.\ boundedness of $(x_n)_n$ implies the a.s.\ boundedness of $(x_n-z)_n$. Moreover,
\cref{as:consistent.gradfn} suggests the a.s.\ boundedness of $((\nabla f_{n}-\nabla f)z)_n$. Due
also to
$\norm{\nabla f_{n}(x_{n})} \leq \norm{\nabla f_{n}(x_{n}) - \nabla f(z)} + \norm{\nabla f(z)}$,
\eqref{gradientf.converges} guarantees that $(\nabla f_{n}(x_{n}))_n$ is bounded a.s. Notice also by
\eqref{gradf.and.ksi},
  \begin{alignat}{2}
    \xi_{n+1} + \tfrac{1}{\lambda} w_{n+1} 
    & {} = {} && \tfrac{1-2\alpha}{\lambda} (\Id - T_{n+1})x_{n+1} \notag\\
    &&& + \tfrac{1}{\lambda} Q_{n+1}^{(\alpha)} (x_n - x_{n+1}) \notag\\
    &&& + \tfrac{\alpha}{\lambda} (T_n - T_{n+1}) x_n - \nabla f_n(x_n)
    \,. \label{xi.n+1}
  \end{alignat}
  Due also to the a.s.\ boundedness of $((\Id - T_{n})x_{n})_n$, $(x_{n+1} - x_{n})_n$,
  $((T_{n+1} - T_{n}) x_n)_n$ and $(\nabla f_n(x_n))_n$, there exists
  $C_2 \coloneqq C_2(\omega)\in\RealPP$ s.t.\
\begin{alignat}{2}
  &&& \hspace{-2em} \norm{\xi_{n+1} + \tfrac{1}{\lambda} w_{n+1}} \notag\\
  & {} \leq {} && \tfrac{2\alpha-1}{\lambda} \norm{(\Id -
    T_{n+1})x_{n+1}} + \tfrac{1}{\lambda} \norm{Q_{n+1}^{(\alpha)} (x_n - x_{n+1})} \notag\\
  &&& + \tfrac{\alpha}{\lambda} \norm{(T_n - T_{n+1}) x_n} + \norm{\nabla f_n(x_n)} \notag\\
  & \leq && \tfrac{2\alpha-1}{\lambda} \norm{(\Id -
    T_{n+1})x_{n+1}} + \tfrac{1}{\lambda} \norm{x_n - x_{n+1}} \notag\\
  &&& + \tfrac{\alpha}{\lambda} \norm{(T_n - T_{n+1}) x_n} + \norm{\nabla f_n(x_n)} 
  \leq C_2 \quad\text{a.s.} \label{bounded.xi.w.as}
\end{alignat}

\begin{lemma}\label{lem:cluster.points}
  The cluster-point set $\mathfrak{C}[(y_n)_n]$ of sequence $(y_n)_n$, as well as
  $\mathfrak{C}[(x_n)_n]$ and $\mathfrak{C}[(v_n)_n]$ are nonempty. If
  $\bar{y} \eqqcolon (\bar{x}, \bar{v}) \in \mathfrak{C}[(y_n)_n]$, then,
  $\bar{x} \in \mathfrak{C}[(x_n)_n]$ and $\bar{v}\in \mathfrak{C}[(v_n)_n]$. For any
  $\bar{x}\in \mathfrak{C}[(x_n)_n]$, there exists $\bar{v}\in \mathfrak{C}[(v_n)_n]$ s.t.\
  $\bar{y} \coloneqq (\bar{x}, \bar{v}) \in \mathfrak{C}[(y_n)_n]$. All of the previous statements
  hold true a.s.
\end{lemma}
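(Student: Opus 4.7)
The plan is to exploit two facts established earlier in the proof of \cref{main.thm}: (a) the sequence $(y_n)_n$ is bounded a.s.\ in $\mathcal{X}^2_{\Theta}$ (from the stochastic quasi-Fej\'{e}r monotonicity \eqref{Fejer.cond}), hence so are $(x_n)_n$ and $(v_n)_n$ in $\mathcal{X}$; (b) the underlying Hilbert space is finite-dimensional, so bounded sets are relatively compact and the Bolzano-Weierstrass theorem applies pointwise on $\Omega$.

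First I would fix an $\omega$ outside the null set on which a.s.\ boundedness fails, and henceforth argue deterministically. Nonemptiness of $\mathfrak{C}[(y_n)_n]$, $\mathfrak{C}[(x_n)_n]$ and $\mathfrak{C}[(v_n)_n]$ then follows immediately by extracting convergent subsequences from the bounded sequences $(y_n)_n$, $(x_n)_n$, $(v_n)_n$, respectively. For the second statement, let $\bar{y} = (\bar{x}, \bar{v}) \in \mathfrak{C}[(y_n)_n]$ and pick a subsequence $(n_k)_k$ with $y_{n_k} \to \bar{y}$ in $\mathcal{X}^2_{\Theta}$. Since the coordinate projections $\mathcal{X}^2 \to \mathcal{X}$, $(x,v) \mapsto x$ and $(x,v) \mapsto v$, are continuous (and the norms induced by $\innerp{\cdot}{\cdot}_{\mathcal{X}^2}$ and $\innerp{\cdot}{\cdot}_{\mathcal{X}^2_{\Theta}}$ are equivalent by the strong positivity of $\Theta$ established via \eqref{Qalpha.strongly.pos}), the convergence $y_{n_k} \to \bar{y}$ forces $x_{n_k} \to \bar{x}$ and $v_{n_k} \to \bar{v}$ along the same subsequence, so $\bar{x}\in \mathfrak{C}[(x_n)_n]$ and $\bar{v}\in \mathfrak{C}[(v_n)_n]$.

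The third statement is the one requiring a diagonal extraction. Given $\bar{x}\in \mathfrak{C}[(x_n)_n]$, choose $(n_k)_k$ s.t.\ $x_{n_k} \to \bar{x}$. The subsequence $(v_{n_k})_k$ is still bounded in $\mathcal{X}$, so by Bolzano-Weierstrass there exist a further subsequence $(n_{k_j})_j$ and a $\bar{v}\in \mathcal{X}$ with $v_{n_{k_j}} \to \bar{v}$. Along this doubly-extracted subsequence, $x_{n_{k_j}} \to \bar{x}$ still holds, so $y_{n_{k_j}} = (x_{n_{k_j}}, v_{n_{k_j}}) \to (\bar{x}, \bar{v}) \eqqcolon \bar{y}$ in $\mathcal{X}^2_{\Theta}$, witnessing $\bar{y}\in \mathfrak{C}[(y_n)_n]$ and $\bar{v} \in \mathfrak{C}[(v_n)_n]$.

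The argument is carried out $\omega$-wise on the complement of a single a.s.-null set (the union of the null sets on which $(y_n)_n$, $(x_n)_n$, or $(v_n)_n$ fail to be bounded), which yields the a.s.\ qualifier. I do not expect a genuine obstacle here; the only point requiring a moment of care is making sure the two extractions (first for $x$, then for $v$) can be combined, which is exactly the purpose of the diagonal-type argument above.
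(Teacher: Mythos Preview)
Your proposal is correct and follows essentially the same approach as the paper's own proof: nonemptiness from a.s.\ boundedness via Bolzano--Weierstrass in the finite-dimensional space, coordinate projections for the second statement, and a nested subsequence extraction (the paper phrases this as ``passing to a subsequence of $(n_k)_k$ if necessary'') for the third. Your additional remarks on norm equivalence via the strong positivity of $\Theta$ and on working $\omega$-wise outside a null set are fine elaborations but do not change the substance.
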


\begin{proof}
  Since $(y_n)_n$ is bounded a.s.\ [\cf discussion after \eqref{Fejer.cond}], its set of cluster
  points is nonempty~\cite[Fact~2.26(iii) and Lem.~2.37]{HB.PLC.book}. Moreover, due to the
  boundedness of $(x_n)_n$ and $(v_n)_n$, $\mathfrak{C}[(x_n)_n]$ and $\mathfrak{C}[(v_n)_n]$ are
  also nonempty. For any cluster point $\bar{y} \eqqcolon (\bar{x}, \bar{v}) \in \mathfrak{C}[(y_n)_n]$,
  there exists a subsequence $(n_k)_k$ s.t.\
  $y_{n_k} \coloneqq (x_{n_k}, v_{n_k}) \limas_k (\bar{x}, \bar{v})$, \ie,
  $\bar{x}\in \mathfrak{C}[(x_n)_n]$ and $\bar{v}\in \mathfrak{C}[(v_n)_n]$. On the other hand,
  given any $\bar{x}\in \mathfrak{C}[(x_n)_n]$, there exists a subsequence $(x_{n_k})_k$ s.t.\
  $x_{n_k}\limas_k \bar{x}$. Since $(v_n)_n$ is bounded, passing to a subsequence of $(n_k)_k$ if
  necessary (avoided here to avoid notational congestion), there exists
  $\bar{v}\in \mathfrak{C}[(v_n)_n]$ s.t.\ $v_{n_k}\limas_k \bar{v}$, and thus,
  $y_{n_k} \coloneqq (x_{n_k}, v_{n_k}) \limas_k (\bar{x}, \bar{v}) \eqqcolon \bar{y} \in
  \mathfrak{C}[(y_n)_n]$.
\end{proof}

Choose, now, arbitrarily a cluster point
$\bar{y} \eqqcolon (\bar{x}, \bar{v})\in \mathfrak{C}[(y_n)_n] \neq\emptyset$. Hence, there exists a
subsequence $(n_k)_k$ s.t.\ $y_{n_k} \eqqcolon (x_{n_k}, v_{n_k}) \limas_k (\bar{x},
\bar{v})$. Then, by \eqref{I.minus.T.to.zero}, applied to $(x_{n_k})_k$, and by the nonexpansivity
(thus continuity) of $T$,
\begin{align}
  \bar{x}\in\Fix T = \mathcal{A} \quad \text{a.s.} \label{barx.fixed.point}
\end{align}
Setting $n=n_k$ and $z = \bar{x}$ in \eqref{gradientf.converges} yields
$\norm{\nabla f_{n_k}(x_{n_k}) - \nabla f(\bar{x})}^2 \leq 2C_{\text{Lip}}^2
\norm{x_{n_k}-\bar{x}}^2 + 2\norm{(\nabla f_{n_k}-\nabla f)\bar{x}}^2$, which, by
\cref{as:consistent.gradfn} and $x_{n_k} \limas_k \bar{x}$, deduces
$\nabla f_{n_k}(x_{n_k}) \limas_k \nabla f(\bar{x})$. Moreover, by \cref{as:unbiased.gradfn},
$\nabla f(x_{n_k}) \in \mathrm{m}\mathcal{F}_{n_k}$ and
\begin{align}
  \Expect_{\given\mathcal{F}_{n_k}} [\nabla f_{n_k}(x_{n_k})]
  & = \Expect_{\given\mathcal{F}_{n_k}} [\nabla f(x_{n_k})] + \varepsilon_{n_k}^f(x_{n_k}) 
    \notag \\
  & = \nabla f(x_{n_k}) + \varepsilon_{n_k}^f(x_{n_k})
    \limas_k \nabla f(\bar{x})\,. \label{expect.grad.fn.converges}
\end{align}

\begin{lemma}\label{lem:ranU.is.closed}
  The range space $\range U$ is closed in the strong topology of $\mathfrak{X}$, \ie,
  $\range U = \overline{\range}\,U$, where $\overline{\range}\,U$ denotes the smallest closed set
  containing $\range U$ (notice that
  $\mathfrak{X} \coloneqq L^2[(\Omega, \Sigma, \Prob), \mathcal{X}]$ is infinite dimensional).
\end{lemma}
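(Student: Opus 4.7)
The plan is to exploit the fact that although $\mathfrak{X}$ is infinite-dimensional, the operator $U$ is defined on the finite-dimensional space $\mathcal{X}$ and acts pointwise on elements of $\mathfrak{X}$. Consequently, the (abused) notation $\range U$, when read inside $\mathfrak{X}$, should be understood as $\Set{y\in\mathfrak{X} \given y(\omega)\in \range_{\mathcal{X}} U \text{ for a.e.\ } \omega\in\Omega}$; this identification holds because for any such $y$ one can set $x(\omega) \coloneqq U^{\dagger} y(\omega)$ (where $U^{\dagger}$ is the Moore--Penrose pseudoinverse of $U$ acting on the finite-dimensional $\mathcal{X}$), and the boundedness of $U^{\dagger}$ on $\mathcal{X}$ ensures $x\in\mathfrak{X}$ with $Ux=y$ a.s.

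First I would record the elementary observation that $V\coloneqq \range_{\mathcal{X}} U$ is a linear subspace of the finite-dimensional $\mathcal{X}$ and is therefore closed in $\mathcal{X}$. Let $P_V$ denote the (finite-dimensional) orthogonal projection of $\mathcal{X}$ onto $V$; by construction $P_Vz=z$ for every $z\in V$, and $\norm{P_Vz}\leq \norm{z}$ for every $z\in \mathcal{X}$. The pointwise lift of $P_V$ to $\mathfrak{X}$, namely $y\mapsto P_V\circ y$, is then a bounded (indeed nonexpansive) linear operator on $\mathfrak{X}$, since
\begin{align*}
  \norm{P_Vy}_{\mathfrak{X}}^2 = \Expect(\norm{P_V y(\omega)}^2) \leq \Expect(\norm{y(\omega)}^2) = \norm{y}_{\mathfrak{X}}^2\,.
\end{align*}

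Now I would take an arbitrary sequence $(y_n)_n$ in $\range U$ (as a subset of $\mathfrak{X}$) with $y_n \to y$ in the strong topology of $\mathfrak{X}$, and show $y\in\range U$. By the characterization above, $y_n(\omega)\in V$ a.s.\ for every $n$, so $P_V y_n = y_n$ a.s. Applying the bounded lifted $P_V$ to both sides of $y_n\to y$ yields $P_V y_n \to P_V y$ in $\mathfrak{X}$. Combining with $P_V y_n = y_n \to y$ and uniqueness of strong limits gives $P_V y = y$ in $\mathfrak{X}$, which is equivalent to $y(\omega)\in V$ for almost every $\omega\in\Omega$. Therefore $y\in\range U$, which proves $\range U = \overline{\range}\,U$ in $\mathfrak{X}$.

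No serious obstacle is anticipated: the entire argument hinges on the finite-dimensionality of $\mathcal{X}$, which forces $\range_{\mathcal{X}} U$ to be closed and thereby provides the bounded pointwise projector $P_V$ that transfers closedness to the $L^2$ lift. An equivalent route, worth mentioning, would be to pass to an a.s.-convergent subsequence $(y_{n_k})_k$ (a standard consequence of $L^2$-convergence), and use pointwise closedness of $V\subset \mathcal{X}$ to conclude $y(\omega)\in V$ a.s.; this alternative avoids defining $P_V$ explicitly but is essentially the same observation.
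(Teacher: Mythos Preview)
Your proof is correct and follows essentially the same idea as the paper's: both exploit the finite-dimensionality of $V\coloneqq\range_{\mathcal{X}} U\subset\mathcal{X}$ to transfer closedness to the $L^2$-lift. The only cosmetic difference is packaging---the paper fixes an orthonormal basis $\{u_i\}_{i=1}^{\rank U}$ of $V$ and shows the coefficient sequences $(\gamma_k^i)_k$ are Cauchy in $L^2[(\Omega,\Sigma,\Prob),\Real]$, whereas you use the pointwise-lifted projection $P_V$ (which, written in that basis, is exactly the same computation).
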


\begin{proof}
  Since $\range U$ is finite dimensional within $\mathcal{X}$, there exists an orthonormal set
  $\Set{u_i}_{i=1}^{\rank U}$ which spans $\range U$. Hence, for any
  $z\in \mathfrak{X} \cap \range U$, there exist real-valued RVs $\Set{\gamma^i}_{i=1}^{\rank U}$
  s.t.\ $z = \sum_{i} \gamma^iu_i$ a.s. Due to the orthonormality of $u_i$s, it can be verified that
  $\norm{z}^2_{\mathfrak{X}} = \sum_i \Expect[(\gamma^i)^2]$. Thus,
  $z\in\mathfrak{X} \Rightarrow \gamma^i\in L^2[(\Omega, \Sigma, \Prob), \Real]$, $\forall
  i$. Consider, now, a sequence $(z_k)_k \subset \range U\cap\mathfrak{X}$, with the associated
  coefficients
  $\Set{\gamma_k^i \given i\in\Set{1, \ldots, \rank U}; k\in\IntegerP} \subset L^2[(\Omega, \Sigma,
  \Prob), \Real]$. Let $\bar{z}$ s.t.\ $z_k\xrightarrow{\mathfrak{X}}_k \bar{z}$. Since $(z_k)_k$ is
  convergent, it is also Cauchy~\cite{HB.PLC.book}, and thus, $(\gamma_k^i)_k$ is also Cauchy,
  $\forall i$. By virtue of the completeness of the Hilbert space
  $L^2[(\Omega, \Sigma, \Prob), \Real]$~\cite{HB.PLC.book}, there exists $\bar{\gamma}^i$ s.t.\
  $\gamma_k^i \xrightarrow{L^2[(\Omega, \Sigma, \Prob), \Real]}_k \bar{\gamma}^i$, $\forall i$. In
  other words,
  $\bar{z} = \lim_{k\to\infty} \sum_{i}\gamma_k^iu_i = \sum_{i} \lim_{k\to\infty}\gamma_k^iu_i =
  \sum_{i} \bar{\gamma}^iu_i \in \range U$, which establishes the claim.
\end{proof}

Since $(x_{n_k})_k$ is bounded a.s., \cref{as:as.bounded.x.as.bounded.xi} suggests that
$(\xi_{n_k})_k$ is also bounded a.s. There exists, thus, $\bar{\xi}$ and a subsequence of $(n_k)_k$,
denoted here also by $(n_k)_k$ to avoid notational congestion, s.t.\ $\xi_{n_k} \limas_k
\bar{\xi}$. Further, via
$\norm{w_{n_k+1}} \leq \lambda \norm{\xi_{n_k+1} + w_{n_k+1}/\lambda} + \lambda \norm{\xi_{n_k+1}}$
and \eqref{bounded.xi.w.as}, $(w_{n_k})_k$ is also bounded a.s., and hence, so is
$(\Expect_{\given \mathcal{F}_{n_k}} (w_{n_k}))_k$. Consequently, passing again to a subsequence of
$(n_k)_k$ if necessary, there exists $\bar{\mathsf{w}}$ s.t.\
$\Expect_{\given \mathcal{F}_{n_k}} (w_{n_k}) \limas_k \bar{\mathsf{w}}$.

Recall now that $(x_n)_n$ is bounded within $\mathfrak{X}$ [\cf discussion after
\eqref{Fejer.cond.type3}]. Moreover, the application of $\Expect(\cdot)$ to
\eqref{gradientf.converges}, \cref{as:bounded.var.gradfn}, and by arguments similar to those after
\eqref{gradientf.converges}, it can be shown that there exists $C_3\in\RealPP$ s.t.\
\begin{align}
  \norm{\nabla f_n(x_n)}_{\mathfrak{X}} \leq C_3\,, \quad \forall n\,. \label{bound.var.gradfn.xn}
\end{align}
Notice by \cref{as:bounded.var.pi} that $\forall n$,
$\norm{\pi_n-\pi_{n+1}}_{\mathfrak{X}}^2 = \norm{\pi_n- \pi + \pi - \pi_{n+1}}_{\mathfrak{X}}^2 \leq
2\norm{\pi_n-\pi}_{\mathfrak{X}}^2 + 2\norm{\pi_{n+1}-\pi}_{\mathfrak{X}}^2 \leq 4C_{\pi}$. Further,
$\norm{\pi_n}_{\mathfrak{X}}^2 = \norm{\pi_n - \pi + \pi}_{\mathfrak{X}}^2 \leq 2\norm{\pi_n -
  \pi}_{\mathfrak{X}}^2 + 2\norm{\pi}_{\mathfrak{X}}^2 \leq 2C_{\pi} +
2\norm{\pi}_{\mathfrak{X}}^2$; thus, $(\pi_n)_n$ is bounded. By \eqref{xi.n+1}, the a.s.\
nonexpansivity of $(Q_n)_n$ suggests that $\exists C_4\in \RealPP$ s.t.\
\begin{alignat*}{2}
  &&& \hspace{-2em} \norm{\xi_{n+1} + \tfrac{1}{\lambda} w_{n+1}}_{\mathfrak{X}} \notag\\
  & {} \leq {} && \tfrac{2\alpha-1}{\lambda} \norm{(\Id -
    T_{n+1})x_{n+1}}_{\mathfrak{X}} + \tfrac{1}{\lambda} \norm{x_n - x_{n+1}}_{\mathfrak{X}} \notag\\
  &&& + \tfrac{\alpha}{\lambda} \norm{(T_n - T_{n+1}) x_n}_{\mathfrak{X}} + \norm{\nabla
    f_n(x_n)}_{\mathfrak{X}} \\
  & \leq && \tfrac{2\alpha-1}{\lambda} \norm{x_{n+1}}_{\mathfrak{X}} + \tfrac{2\alpha-1}{\lambda}
  \norm{Q_{n+1}x_{n+1}}_{\mathfrak{X}} + \tfrac{2\alpha-1}{\lambda} \norm{\pi_{n+1}}_{\mathfrak{X}}\\
  &&& + \tfrac{1}{\lambda} \norm{x_n}_{\mathfrak{X}} + \tfrac{1}{\lambda}
  \norm{x_{n+1}}_{\mathfrak{X}} + \tfrac{\alpha}{\lambda} \norm{Q_nx_n}_{\mathfrak{X}} +
  \tfrac{\alpha}{\lambda} \norm{\pi_n}_{\mathfrak{X}} \\
  &&& + \tfrac{\alpha}{\lambda} \norm{Q_{n+1} x_n}_{\mathfrak{X}} + \tfrac{\alpha}{\lambda}
  \norm{\pi_{n+1}}_{\mathfrak{X}} + \norm{\nabla f_n(x_n)}_{\mathfrak{X}} \\
  & \leq && \tfrac{4\alpha-1}{\lambda} \norm{x_{n+1}}_{\mathfrak{X}} + \tfrac{2\alpha+1}{\lambda}
  \norm{x_n}_{\mathfrak{X}} + \tfrac{3\alpha-1}{\lambda} \norm{\pi_{n+1}}_{\mathfrak{X}} \\
  &&& + \tfrac{\alpha}{\lambda} \norm{\pi_n}_{\mathfrak{X}} + \norm{\nabla
    f_n(x_n)}_{\mathfrak{X}} \leq C_4\,.
\end{alignat*}
Due to \cref{as:l2.bounded.x.l2.bounded.xi}, which establishes the boundedness of $(\xi_{n_k})_k$,
the previous discussion renders $(w_{n_k})_k$ bounded. By Jensen's
inequality~\cite[\S9.7(h)]{Williams.book.91}, $(\Expect_{\given \mathcal{F}_{n_k}}(w_{n_k}))_k$ is
also bounded in $\mathfrak{X}$, and hence uniformly integrable
(UI)~\cite[\S13.3(a)]{Williams.book.91}. Since
$\Expect_{\given \mathcal{F}_{n_k}}(w_{n_k}) \limas_k\bar{\mathsf{w}}$, then, this convergence holds
also in probability~\cite[App.~A13.2(a)]{Williams.book.91}. This and the UI argument imply that
$\Expect_{\given \mathcal{F}_{n_k}}(w_{n_k}) \xrightarrow{\mathfrak{X}}_k
\bar{\mathsf{w}}$~\cite[App.~A13.2(f)]{Williams.book.91}. Going back to \eqref{w.expression}, notice
by \cref{lem:expect.inner.prod} and
$\Expect(\cdot) = \Expect[ \Expect_{\given
  \mathcal{F}_{n}}(\cdot)]$~\cite[\S9.7(a)]{Williams.book.91} that
$\forall u\in \ker U\cap \mathcal{X}$,
\begin{alignat}{2}
  &&& \hspace{-30pt} \innerp{u}{\Expect_{\given\mathcal{F}_{n_k}}(w_{n_k})}_{\mathfrak{X}} \notag\\
  & {} = {} && (1-\alpha) \innerp{u}{t_{n_k}}_{\mathfrak{X}} +
  \innerp{u}{U\Expect_{\given\mathcal{F}_{n_k}}(v_{n_k})}_{\mathfrak{X}} \notag\\
  & = && (1-\alpha) \innerp{u}{t_{n_k}}_{\mathfrak{X}} +
  \innerp{Uu}{v_{n_k}}_{\mathfrak{X}} \notag\\
  & = && (1-\alpha) \innerp{u}{t_{n_k}}_{\mathfrak{X}} = (1-\alpha) \innerp{u}{\Expect(t_{n_k})}
  \,. \label{tk.1st.step}
\end{alignat}
It can be also seen via \eqref{w.expression} that
$(1-\alpha) t_{n_k} = \Expect_{\given\mathcal{F}_{n_k}} (w_{n_k}) -
U\Expect_{\given\mathcal{F}_{n_k}}(v_{n_k})$; hence,
$(1-\alpha) \Expect(t_{n_k}) = \Expect (w_{n_k}) - U\Expect(v_{n_k})$ and
$(1-\alpha)^2 \norm{\Expect(t_{n_k})}^2 \leq 2\norm{\Expect (w_{n_k})}^2 + 2\norm{U}^2
\norm{\Expect(v_{n_k})}^2 \leq 2\norm{\Expect (w_{n_k})}^2 + 2
\norm{\Expect(v_{n_k})}^2$. Furthermore, Jensen's inequality~\cite[\S9.7(h)]{Williams.book.91}
yields
$(1-\alpha)^2 \norm{\Expect(t_{n_k})}^2 \leq 2\Expect(\norm{w_{n_k}}^2) + 2\Expect(\norm{v_{n_k}}^2)
= 2\norm{w_{n_k}}^2_{\mathfrak{X}} + 2\norm{v_{n_k}}^2_{\mathfrak{X}}$, and consequently, the
boundedness of $(w_{n_k})_k$ and $(v_{n_k})_k$ in $\mathfrak{X}$ results in that
$(\Expect(t_{n_k}))_k$ is also bounded in $\mathcal{X}$. According now to \cref{as:unbiased.Tn},
there exists a subsequence of $(n_k)_k$, denoted here again by $(n_k)_k$ to avoid clutter in
notations s.t.\ $\lim_k \Expect(t_{n_k})\in \range (\Id-Q) = \range U = (\ker U)^{\perp}$. Thus, via
\eqref{tk.1st.step} and the continuity of the inner product~\cite[Lem.~2.41(iii)]{HB.PLC.book},
$\innerp{u}{\bar{\mathsf{w}}}_{\mathfrak{X}} = \lim_k
\innerp{u}{\Expect_{\given\mathcal{F}_{n_k}}(w_{n_k})}_{\mathfrak{X}} = (1-\alpha) \innerp{u}{\lim_k
  \Expect(t_{n_k})} = 0$. Hence,
$\bar{\mathsf{w}}\in (\ker U)^{\perp} = \overline{\range}\,U = \range U$, according to
\cite[Fact~2.18(iii)]{HB.PLC.book} and \cref{lem:ranU.is.closed}.

Fix arbitrarily an $\epsilon > 0$. By the convexity of $h_{n_k}+g$, $\forall z\in\mathcal{X}$ and
a.s.,
\begin{alignat}{2}
  &&& \hspace{-2em} (h_{n_k}+g)(z) \notag\\
  & {} \geq {} && \innerp{z-x_{n_k+1}}{\xi_{n_k+1}} + (h_{n_k}+g)(x_{n_k+1}) \notag\\
  & \geq && \innerp{z-x_{n_k+1}}{\xi_{n_k+1}} + \innerp{x_{n_k+1} - x_{n_k}}{\tau_{n_k}} \notag\\
  &&& + (h_{n_k}+g)(x_{n_k}) \notag\\
  & = && \innerp{z-x_{n_k}}{\xi_{n_k+1}} + (h_{n_k}+g)(x_{n_k}) \notag\\
  &&& + \innerp{x_{n_k} - x_{n_k+1}}{\xi_{n_k+1}} + \innerp{x_{n_k+1} - x_{n_k}}{\tau_{n_k}} \notag\\
  & \geq && \innerp{z-x_{n_k}}{\xi_{n_k+1}} + (h_{n_k}+g)(x_{n_k}) \notag\\
  &&& - \norm{x_{n_k} - x_{n_k+1}} \norm{\xi_{n_k+1}} - \norm{x_{n_k} - x_{n_k+1}} \norm{\tau_{n_k}}
  \,, \label{show.esubgrad}
\end{alignat}
where $\tau_{n_k}\in \partial (h_{n_k}+g)(x_{n_k})$ is chosen according to \cref{as:bounded.tau},
$\forall k$. Moreover, by
\eqref{x.v.as.regular} and \cref{as:as.unbiasedness}, there exists an integer
$k_{\mypound} \coloneqq k_{\mypound}(\omega)$ s.t.\ $\forall k\geq k_{\mypound}$,
$\norm{x_{n_k} - x_{n_k+1}}\leq \epsilon/ [3(C_{\partial} + C_5)]$,
$-\epsilon/3 \leq \varepsilon_{n_k}^h(x_{n_k}) \leq \epsilon/3$ and
$-\epsilon/3 \leq -\varepsilon_{n_k}^h(z) \leq \epsilon/3$. By \eqref{show.esubgrad},
\begin{alignat*}{2}
  (h_{n_k}+g)(z) & {} \geq {} && \innerp{z-x_{n_k}}{\xi_{n_k+1}} +
  (h_{n_k}+g)(x_{n_k}) \\ 
  &&& - \tfrac{\epsilon/3}{C_{\partial} + C_5} (\norm{\tau_{n_k}} + \norm{\xi_{n_k+1}}) \,.
\end{alignat*}
Notice that \cref{as:as.bounded.x.as.bounded.xi} implies the existence of
$C_5 \coloneqq C_5(\omega)\in \RealPP$ s.t.\ $\norm{\xi_n}\leq C_5$. Applying
$\Expect_{\given \mathcal{F}_{n_k}}(\cdot)$ to the previous inequality and adhering to
\cref{as:unbiased.hn,as:bounded.tau}, as well as \cref{lem:expect.inner.prod},
$\forall z\in\mathcal{X}$, $\forall k\geq k_{\mypound}$ and a.s.,
\begin{alignat}{2}
  &&& \hspace{-20pt} \tfrac{\epsilon}{3} + (h+g)(z) \notag\\
  & {} \geq {} && -\varepsilon_{n_k}^h(z) + (h+g)(z) = \Expect_{\given \mathcal{F}_{n_k}}
  (h_{n_k}+g)(z) \notag\\
  & \geq && \innerp{z-x_{n_k}}{\Expect_{\given \mathcal{F}_{n_k}} (\xi_{n_k+1})} \notag\\
  &&& + \Expect_{\given \mathcal{F}_{n_k}} [(h_{n_k}+g)(x_{n_k})] - \tfrac{\epsilon}{3} \notag\\
  & = && \innerp{z-x_{n_k}}{\Expect_{\given \mathcal{F}_{n_k}} (\xi_{n_k+1})} + (h+g)(x_{n_k})
  \notag\\
  &&& - \varepsilon_{n_k}^h(x_{n_k}) - \tfrac{\epsilon}{3} \notag\\
  & \geq && \innerp{z-x_{n_k}}{\Expect_{\given \mathcal{F}_{n_k}} (\xi_{n_k+1})} + (h+g)(x_{n_k}) -
  \tfrac{\epsilon}{3} - \tfrac{\epsilon}{3} \,. \label{epsilon.subgrad.1st.step}
\end{alignat}
Since $(\xi_n)_n$ is bounded a.s., so is $(\xi_{n_k})_k$ and, consequently, so is
$(\Expect_{\given \mathcal{F}_{n_k}}(\xi_{n_k+1}))_k$. Hence, there exists $\bar{\xi}$ s.t.\
$\Expect_{\given \mathcal{F}_{n_k}} (\xi_{n_k+1}) \limas_k\bar{\xi}$ (once again, passing to a
subsequence of $(n_k)_k$ is avoided). Moreover, since $h+g$ is l.s.c.,
$\liminf_k (h+g)(x_{n_k}) \geq (h+g)(\bar{x})$~\cite{HB.PLC.book}. Hence, by the application of
$\liminf_k$ to \eqref{epsilon.subgrad.1st.step} and the continuity of the inner
product~\cite[Lem.~2.41(iii)]{HB.PLC.book},
$\epsilon/3 + (h+g)(z) \geq \innerp{z-\bar{x}}{\bar{\xi}} + (h+g)(\bar{x}) -2\epsilon/3$,
$\forall z\in\mathcal{X}$, and $(\bar{x}, \bar{\xi}) \in \graph \partial_{\epsilon} (h+g)$
a.s. Since $\epsilon > 0$ was chosen arbitrarily,
\begin{align}
  (\bar{x}, \bar{\xi}) \in \cap_{\epsilon\in\RealPP} \graph \partial_{\epsilon} (h+g) = \graph
  \partial (h+g) \quad\text{a.s.} \label{cluster.point.in.graph.hplusg}
\end{align}
Similarly to the way that \eqref{expect.wn.as.regular} follows from \eqref{wn.as.regular}, it can be
verified that \eqref{I.minus.Tn.to.zero} yields
$\Expect_{\given \mathcal{F}_{n_k}} \left[ (T_{n_k+1}-\Id) x_{n_k+1}\right] \limas_k 0$,
\eqref{Tn.as.regular} leads to
$\Expect_{\given \mathcal{F}_{n_k}} \left[(T_{n_k+1} - T_{n_k})x_{n_k}\right] \limas_k 0$, and
\eqref{x.v.as.regular} gives, via fact
$\norm{Q_n^{(\alpha)}} = \norm{\alpha Q_n + (1-\alpha) \Id}\leq \alpha \norm{Q_n} + (1-\alpha) \leq
1$, $\forall n$, a.s.,
$\Expect_{\given \mathcal{F}_{n_k}} [Q_{n_k+1}^{(\alpha)} (x_{n_k+1} - x_{n_k})] \limas_k
0$. Recalling \eqref{expect.wn.as.regular} and \eqref{expect.grad.fn.converges}, the application of
$\lim_{k} \Expect_{\given \mathcal{F}_{n_k}}(\cdot)$ to \eqref{gradf.and.ksi} yields%
\begin{alignat}{3}
  &&&&& \hspace{-1.25em} -\lim_{k\to\infty}  \Expect_{\given \mathcal{F}_{n_k}}(w_{n_k+1}) - \lambda
  \lim_{k\to\infty} \Expect_{\given \mathcal{F}_{n_k}} \left[\nabla f_{n_k}(x_{n_k})\right]
  \notag\\
  &&&&& - \lambda \lim\nolimits_{k\to\infty} \Expect_{\given \mathcal{F}_{n_k}} (\xi_{n_k+1}) 
  \notag\\ 
  &&& {} = {} && (1-2\alpha) \lim_{k\to\infty} \Expect_{\given \mathcal{F}_{n_k}} \left[
    (T_{n_k+1}-\Id) x_{n_k+1}\right] \notag\\
  &&&&& + \lim_{k\to\infty} \Expect_{\given \mathcal{F}_{n_k}} [Q_{n_k+1}^{(\alpha)} (x_{n_k+1} -
  x_{n_k})] \notag\\
  &&&&& + \alpha \lim_{k\to\infty} \Expect_{\given \mathcal{F}_{n_k}} \left[(T_{n_k+1} -
    T_{n_k})x_{n_k}\right] \notag\\ 
  & \Leftrightarrow {} &&&& \hspace{-1.25em} \nabla f(\bar{x}) + \bar{\xi}
  = -\tfrac{1}{\lambda} \bar{\mathsf{w}} \in \range U \quad\text{a.s.} \label{reach.optimality}
\end{alignat}
Since \eqref{reach.optimality} holds true for any cluster point in $\mathfrak{C}[(y_n)_n]$,
\cref{fact:Ypsilon.star} and \cref{lem:cluster.points} suggest that \textit{any}\/
$\bar{x}\in \mathfrak{C}[(x_n)_n]$ belongs to $\mathcal{A}_*$, solving thus
\eqref{the.problem} a.s.

\section{Proof of \cref{thm:exact.T}}\label[appendix]{app:exact.T}

\cref{as:as.bounded.x.as.bounded.xi,as:l2.bounded.x.l2.bounded.xi} are used in \cref{app:main.thm}
to establish the boundedness of $(w_n)_n$ a.s.\ and in $\mathfrak{X}$. However, in the case where
$T$ is known exactly, the boundedness of $(w_n)_n$ follows from the boundedness of $(v_n)_n$, since
by \eqref{define.w} and \eqref{define.v}, $w_n = Uv_n$. Moreover, by \cref{lem:expect.inner.prod}
and the fact that $v_n\in \mathrm{m}\mathcal{F}_n$,
$\Expect_{\given\mathcal{F}_n}(w_n) = U\Expect_{\given\mathcal{F}_n}(v_n) = Uv_n$. Thus,
$\bar{\mathsf{w}} \xleftarrow{\text{a.s.}}_k \Expect_{\given\mathcal{F}_{n_k}}(w_{n_k}) = Uv_{n_k}
\limas_k U\bar{v}$, and \eqref{reach.optimality} becomes
$\nabla f(\bar{x}) + \bar{\xi} = -(1/\lambda) U\bar{v}$ a.s. Hence, according to
\cref{fact:Ypsilon.star}, the arbitrarily chosen cluster point
$\bar{y} = (\bar{x}, \bar{v})\in \Upsilon^{(\lambda)}_*$. This result together with the stochastic
Fej\'{e}r monotonicity of $(y_n)_n$ w.r.t.\ $\Upsilon^{(\lambda)}_*$ [\cf \eqref{Fejer.cond}]
suggest that $\mathfrak{C}[(y_n)_n]$ is a singleton~\cite[Prop.~2.3(iv)]{PLC.JCP.stochastic.QF.15},
that $\mathfrak{C}[(x_n)_n]$ is also a singleton by virtue of \cref{lem:cluster.points}, and that
$(x_n)_n$ converges a.s.\ to a solution of \eqref{the.problem}.

\section{Proof of \cref{cor:HRLS}}\label[appendix]{app:HRLS}

According to \cref{lem:Tn.LS}, \cref{as:affine.maps} holds true. Since $(f, f_n) \coloneqq (0, 0)$
and $(h, h_n) \coloneqq (0, 0)$ in \eqref{HLS},
\crefnosort{as:losses,as:as.unbiasedness,as:bounded.var.gradfn} hold trivially true. Moreover,
\cref{lem:bounded.subgrads} and $(h, h_n) \coloneqq (0, 0)$ suggest that \cref{as:bounded.subgrads}
holds true. In the context of \cref{main.thm}, $L_{\nabla f}$ and $\lambda$ can take any values in
$\RealPP$. The claim of \cref{cor:HRLS} follows now from \cref{main.thm}.

\section{Proof of \cref{cor:S-FM-HSDM(CRegLS)}}\label[appendix]{app:S-FM-HSDM(CRegLS)}

Since $(f, f_n) \coloneqq (0, 0)$ in \eqref{CRegLS} and $T_n \coloneqq P_{\mathcal{A}} \eqqcolon T$
in \cref{algo:CRegLS}, \crefnosort{as:losses,as:unbiased.gradfn,as:bounded.var.gradfn} hold
trivially true. Moreover, according to \cref{lem:bounded.subgrads}, \cref{as:bounded.tau} holds also
true. The claim of \cref{cor:S-FM-HSDM(CRegLS)} follows now from \cref{thm:exact.T}.

\section{Proof of \cref{lem:Tn.LS}}\label[appendix]{app:Tn.LS}

The proof that \cref{as:Tn} holds true follows exactly the steps of the proof of \cite[(70a) and
(70d)]{FM-HSDM.Optim.18}, in the case where $\delta \coloneqq 1$ and
$\varphi_{\delta}(\vect{x}) = \varphi_1(\vect{x}) \coloneqq [1/(2n)] \sum\nolimits_{\nu=1}^n
(\vect{a}_{\nu}^{\intercal} \vect{x} - b_{\nu})^2$, $\forall \vect{x}\in \mathcal{X}$, in
\cite[(73)]{FM-HSDM.Optim.18}. Furthermore, by virtue of \cref{as:ergodic.an.bn}, of
($\varpi_n \coloneqq \varpi$, $\forall n$), and of the continuity of the matrix-inversion operation,
mappings \eqref{Tn.LS} satisfy \cref{as:consistent.Tn}. According to \cref{fact:U.I-Q}, the normal
equations suggest that for any $T\in\mathfrak{T}_{\mathcal{A}}$,
$\Set{\vect{x} \given \vect{R} \vect{x} = \vect{r}} = \ker(\Id-Q) + \bm{\theta}_* \Rightarrow
\ker(\Id-Q) = \Set{\vect{x} - \bm{\theta}_* \given \vect{R} \vect{x} = \vect{r} = \vect{R}
  \bm{\theta}_*} = \Set{\vect{x}' \given \vect{R} \vect{x}' = \vect{0}} = \ker\vect{R} =
\Set{\vect{0}}$, due to the non-singularity of $\vect{R}$. However,
$\range(\Id-Q) = [\ker(\Id-Q)]^{\perp} = \Set{\vect{0}}^{\perp} = \mathcal{X}$. Hence, if
$(\Expect(\vect{t}_n))_n$ is bounded, then \textit{any}\/ of its cluster points belongs trivially to
$\range(\Id-Q) = \mathcal{X}$.

\section{Proof of \cref{lem:unbiasedness.LS}}\label[appendix]{app:unbiasedness.LS}

Notice first that due to the IID assumption, $\forall\nu \in \Set{1, \ldots, n}$,
$\Expect_{\given \sigma(\vect{R}_n)}(\vect{a}_{\nu} \vect{a}_{\nu}^{\intercal}) =
\vect{R}_n$~\cite[\S9.11]{Williams.book.91}. Thus, by applying
$\Expect_{\given\sigma(\vect{R}_n)}(\cdot)$ to
$\vect{R}_{\nu} = (n/\nu)\vect{R}_n - (1/\nu) \sum_{i={\nu+1}}^n \vect{a}_i \vect{a}_i^{\intercal}$,
which can be straightforwardly derived from \eqref{Rn.rn},
$\Expect_{\given \sigma(\vect{R}_n)} (\vect{R}_{\nu}) = \vect{R}_n$ can be established. Moreover,
due to the conditional-independence hypothesis
$\Expect_{\given \mathcal{F}_n}(\vect{R}_{\nu}) = \Expect_{\given \mathcal{F}_n} \Expect_{\given
  \mathcal{F}_n, \sigma(\vect{R}_n)}[(1/\nu) \sum_{i=1}^{\nu} \vect{a}_i \vect{a}_i^{\intercal}] =
(1/\nu) \sum_{i=1}^{\nu} \Expect_{\given \mathcal{F}_n} \Expect_{\given \mathcal{F}_n,
  \sigma(\vect{R}_n)} (\vect{a}_i \vect{a}_i^{\intercal}) = (1/\nu) \sum_{i=1}^{\nu} \Expect_{\given
  \mathcal{F}_n} \Expect_{\given \sigma(\vect{R}_n)} (\vect{a}_i \vect{a}_i^{\intercal}) =
\Expect_{\given \mathcal{F}_n} (\vect{R}_n) = \vect{R}$. Furthermore, by the linear-regression model
of \cref{Sec:system.id} and the assumptions on the noise process $(\eta_n)_n$,
$\Expect_{\given \sigma(\vect{R}_n)} (b_{\nu}\vect{a}_{\nu}) = \Expect_{\given \sigma(\vect{R}_n)}
(\vect{a}_{\nu}\vect{a}_{\nu}^{\intercal} \bm{\theta}_*) + \Expect_{\given \sigma(\vect{R}_n)}
(\eta_n \vect{a}_{\nu}) = \Expect_{\given \sigma(\vect{R}_n)}
(\vect{a}_{\nu}\vect{a}_{\nu}^{\intercal}) \bm{\theta}_* + \Expect (\eta_n) \Expect_{\given
  \sigma(\vect{R}_n)}(\vect{a}_{\nu}) = \vect{R}_n\bm{\theta}_*$. Furthermore, in a way similar to
that of the $\Expect_{\given \mathcal{F}_n}(\vect{R}_{\nu})$ case,
$\Expect_{\given \mathcal{F}_n}(\vect{r}_{\nu}) = \Expect_{\given \mathcal{F}_n}(\vect{R}_n
\bm{\theta}_*) = \vect{R}\bm{\theta}_* = \vect{r}$.

\begin{applist}

\item\label[appendix]{app:unbiased.gradient} Applying $\Expect_{\given\mathcal{F}_n}(\cdot)$ to
  $(T-T_n)\vect{x}_n = - (\mu/\varpi) \bm{\mathcal{E}}_n^R\vect{x}_n + (\mu/\varpi)
  \bm{\varepsilon}_n^r$ yields \eqref{unbiased.Tn}. In a similar way, \eqref{unbiased.Qn} can be
  established. Moreover,
  \begin{alignat*}{2}
    \vect{t}_n
    && {} = {} & \Expect_{\given \mathcal{F}_n} \left[ \sum\nolimits_{\nu=1}^n (T-T_{\nu})
        x_{\nu} \right] \\
    && {} = {} & -\tfrac{\mu}{\varpi} \sum\nolimits_{\nu=1}^n \left[ \vect{R} - \Expect_{\given
        \mathcal{F}_n}(\vect{R}_{\nu}) \right]\vect{x}_{\nu} \\  
    &&& + \tfrac{\mu}{\varpi} \sum\nolimits_{\nu=1}^n \left[ \vect{r} - \Expect_{\given
        \mathcal{F}_n}(\vect{r}_{\nu}) \right] = \vect{0} \,.
  \end{alignat*}
  Furthermore,
  $\vect{R}_{n-1} - \vect{R}_n = \vect{R}_n/(n-1) - \vect{a}_n \vect{a}_n^{\intercal}/(n-1) =
  \vect{R}/(n-1) - \vect{a}_n \vect{a}_n^{\intercal}/(n-1) - \bm{\mathcal{E}}_n^R/(n-1)$ and
  $\vect{r}_{n-1} - \vect{r}_n = \vect{r}/(n-1) - b_n\vect{a}_n/(n-1) -
  \bm{\varepsilon}_n^r/(n-1)$. Hence, due to
  $T_n - T_{n-1} = (\mu/\varpi) (\vect{R}_{n-1} - \vect{R}_n) + (\mu/\varpi)(\vect{r}_n -
  \vect{r}_{n-1})$,
  \begin{alignat*}{2}
    &&& \hspace{-20pt} \Expect_{\given\mathcal{F}_n} [(T_n - T_{n-1})\vect{x}_{n-1}] \\
    && {} = {} & \tfrac{\mu}{\varpi(n-1)}\vect{R}\vect{x}_{n-1} - \tfrac{\mu}{\varpi(n-1)}
    \Expect_{\given\mathcal{F}_n} (\vect{a}_n \vect{a}_n^{\intercal})\vect{x}_{n-1} \\
    &&& - \tfrac{\mu}{\varpi(n-1)} \Expect_{\given\mathcal{F}_n}
    (\bm{\mathcal{E}}_n^R) \vect{x}_{n-1} - \tfrac{\mu}{\varpi(n-1)} \vect{r} \\
    &&& + \tfrac{\mu}{\varpi(n-1)} \Expect_{\given\mathcal{F}_n} (b_n\vect{a}_n) +
    \tfrac{\mu}{\varpi(n-1)} \Expect_{\given\mathcal{F}_n}(\bm{\varepsilon}_n^r) = \vect{0}\,.
  \end{alignat*}

\item By the assumption that noise $\eta_n$ is independent of $\vect{a}_n$, and thus independent
  also of $\mathcal{F}_n$, as well as by
  $\Expect_{\given\mathcal{F}_n}(\bm{\mathcal{E}}_n^R) = \vect{0}$, it can be verified that
  $\Expect_{\given \mathcal{F}_n}(b_n^2) = \Expect(b_n^2)$ a.s. Now, by \eqref{CRegLS} and
  \cref{lem:expect.inner.prod}, $\forall n$, $\forall \vect{x}\in \mathcal{X}$ and a.s.,
  \begin{align*}
    \varepsilon_n^h(\vect{x})
    & = \Expect_{\given \mathcal{F}_n}[(h-h_n)(\vect{x})] \\
    & = \tfrac{1}{2} \Expect_{\given \mathcal{F}_n} (\vect{x}^{\intercal} \bm{\mathcal{E}}_n^R \vect{x})
      - \Expect_{\given \mathcal{F}_n} (\vect{x}^{\intercal} \bm{\varepsilon}^r_n) \\
    & = \tfrac{1}{2}  \vect{x}^{\intercal} \Expect_{\given \mathcal{F}_n}(\bm{\mathcal{E}}_n^R) \vect{x}
      - \vect{x}^{\intercal} \Expect_{\given \mathcal{F}_n} (\bm{\varepsilon}^r_n) = 0 \,.
  \end{align*}
  The claim $\varepsilon_n^h(\vect{x}_n) = 0$, a.s., can be similarly verified.
  
\item Notice that
  $\Expect_{\given \mathcal{F}_n}[(\nabla f - \nabla f_n)\vect{x}_n] = \Expect_{\given \mathcal{F}_n}
  (\bm{\mathcal{E}}^R_n \vect{x}_n) - \Expect_{\given \mathcal{F}_n}(\bm{\varepsilon}^r_n) = \Expect
  (\bm{\mathcal{E}}^R_n) \vect{x}_n - \Expect(\bm{\varepsilon}^r_n) = \vect{0}$. Moreover, by
  following similar steps as those in the last part of \cref{app:unbiased.gradient},
  $\Expect_{\given \mathcal{F}_n}[(\nabla f_n - \nabla f_{n-1}) \vect{x}_{n-1}] = \vect{0}$ can be
  also established.

\item Here, only the case of $(h, h_n) = (l, l_n)$ is considered, since the case of
  $(h, h_n) = (0, 0)$ can be trivially deduced. To this end, there exists
  $\bm{\chi}_n\in \partial g(\vect{x}_n)$ s.t.\
  $\bm{\xi}_n = \vect{R}_{n-1} \vect{x}_n - \vect{r}_{n-1} + \bm{\chi}_n$. Recall here that
  $\bm{\chi}_n\in \partial g(\vect{x}_n) = \partial\norm{}_1(\vect{x}_n)$ iff
  \begin{align}
    [\bm{\chi}_n]_d \in
    \begin{cases}
      \Set{\sign([\vect{x}_n]_d)}\,, & \text{if}\ [\vect{x}_n]_d\neq 0\,, \\
      [-1,1]\,, & \text{if}\ [\vect{x}_n]_d = 0\,,
    \end{cases} \label{l1.subgrad}
  \end{align}
  where $\sign(\cdot)$ denotes the sign of a real-valued number. Hence,
  $\bm{\chi}_n\in \mathrm{m}\mathcal{F}_n$. By arguments similar to those in the first part of
  \cref{app:unbiased.gradient},
  $\Expect_{\given \mathcal{F}_n}(\bm{\xi}_n) = \Expect_{\given \mathcal{F}_n}(\vect{R}_{n-1})
  \vect{x}_n - \Expect_{\given \mathcal{F}_n}(\vect{r}_{n-1}) + \bm{\chi}_n = \vect{R}\vect{x}_n -
  \vect{r} + \bm{\chi}_n\in \partial (h+g)(\vect{x}_n)$. In other words, $\epsilon_n=0$, a.s., in
  \eqref{unbiased.subgrad.hg}.

\end{applist}

\section{Proof of \cref{lem:bounded.subgrads}}\label[appendix]{app:bounded.subgrads}

\begin{applist}

\item\label[appendix]{app:tau} According to \eqref{l1.subgrad}, for any $(\vect{z}, \bm{\tau})$ s.t.\
  $\bm{\tau} \in \partial\norm{}_1(\vect{z})$, $\lvert [\bm{\tau}]_d \rvert\leq 1$, $\forall
  d$. Hence, $\norm{\bm{\tau}}\leq \sqrt{D}$ and
  $\Expect_{\given \mathcal{F}_n} (\norm{\bm{\tau}}) \leq \sqrt{D}$ a.s. This bound renders
  \cref{as:bounded.tau,as:as.bounded.x.as.bounded.xi,as:l2.bounded.x.l2.bounded.xi} true in the case
  where $h_n \coloneqq 0$ and $h \coloneqq0$ in \eqref{CRegLS}.

  The following discussion deals with the case of $(h, h_n) \coloneqq (l, l_n)$ in \eqref{CRegLS},
  where, according to \eqref{grad.fn.LS}, $\nabla h_n(\vect{x}) = \vect{R}_n \vect{x} - \vect{r}_n$,
  $\forall \vect{x}\in \mathcal{X}$. By \cref{as:ergodic.an.bn}, given
  $\epsilon \coloneqq \epsilon(\omega) \in \RealPP$, there exists
  $n_{\mypound} \coloneqq n_{\mypound}(\omega) \in \IntegerP$ s.t.\
  $\norm{\vect{R}_n} \leq \norm{\vect{R}} + \epsilon$ and
  $\norm{\vect{r}_n} \leq \norm{\vect{r}} + \epsilon$, $\forall n\geq n_{\mypound}$. Define, then,
  $\varpi \coloneqq \varpi (\omega) \coloneqq \max \Set{ \Set{\norm{\vect{R}_n} \given 0\leq n<
      n_{\mypound}-1}, \norm{\vect{R}} + \epsilon}$ and
  $\varpi' \coloneqq \varpi'(\omega) \coloneqq \max \Set{ \Set{\norm{\vect{r}_n} \given 0\leq n<
      n_{\mypound}-1}, \norm{\vect{r}} + \epsilon}$. According to the hypothesis, there exists
  $C_z \coloneqq C_z(\omega) \in \RealPP$ s.t.\ $\norm{\vect{z}_n} \leq C_z$, $\forall n$. Thus,
  $\forall \bm{\delta}_n \in \partial\norm{}_1(\vect{z}_n)$,
  $\bm{\tau}_n \coloneqq \vect{R}_n \vect{z}_n - \vect{r}_n + \rho \bm{\delta}_n \in \partial
  (h_n+g)(\vect{z}_n)$ and
  $\norm{\bm{\tau}_n} \leq \norm{\vect{R}_n} \norm{\vect{z}_n} + \norm{\vect{r}_n} + \rho
  \norm{\bm{\delta}_n} \leq \varpi C_z + \varpi' + \rho \sqrt{D} \eqqcolon C_{\partial}$, $\forall n$;
  thus, \cref{as:bounded.tau} holds true. By using $\vect{x}_n$ in the place of $\vect{z}_n$ in the
  previous discussion, it can be verified that \cref{as:as.bounded.x.as.bounded.xi} holds also true.
  
\item Observe that
  $\norm{\bm{\xi}_n}^2 = \norm{\vect{R}_n \vect{x}_n - \vect{r}_n + \rho \bm{\delta}_n}^2 \leq
  2\norm{\vect{R}_n \vect{x}_n}^2 + 2\norm{\vect{r}_n + \rho \bm{\delta}_n}^2 \leq 2
  \norm{\vect{R}_n}^2 \norm{\vect{x}_n}^2 + 4\norm{\vect{r}_n}^2 + 4\rho^2 \norm{\bm{\delta}_n}^2
  \leq 2 \varpi^2 \norm{\vect{x}_n}^2 + 4\varpi'{}^2 + 4\rho^2 D$, a.s. Hence,
  $\norm{\bm{\xi}_n}^2_{\mathfrak{X}} = \Expect(\norm{\bm{\xi}_n}^2) \leq 2 \varpi^2 \Expect
  (\norm{\vect{x}_n}^2) + 4 \varpi'{}^2 + 4\rho^2 D = 2 \varpi^2 \norm{\vect{x}_n}_{\mathfrak{X}}^2
  + 4\varpi'{}^2 + 4\rho^2 D$, and consequently, $(\norm{\bm{\xi}_n}_{\mathfrak{X}}^2)_n$ is
  bounded.

\end{applist}




\end{document}